\documentclass{amsart}
\usepackage{latexsym,amsxtra,amscd,ifthen}
\usepackage{amsfonts}
\usepackage{verbatim}
\usepackage{amsmath}
\usepackage{amsthm}
\usepackage{amssymb}
\usepackage{url}
\usepackage[arrow,matrix]{xy}
\usepackage{enumitem}


\allowdisplaybreaks[4]

\newtheorem{theorem}{Theorem}[section]

\newtheorem{corollary}[theorem]{Corollary}

\newtheorem{convention}[theorem]{Convention}
\newtheorem{lemma}[theorem]{Lemma}

\newtheorem{notation}[theorem]{Notation}

\newtheorem{proposition}[theorem]{Proposition}
\newtheorem{prop-def}[theorem]{Proposition-Definition}
\theoremstyle{definition}
\newtheorem{definition}[theorem]{Definition}
\newtheorem{remark}[theorem]{Remark}
\newtheorem{question}[theorem]{Question}

\newtheorem{remarks}[theorem]{Remarks}

\numberwithin{equation}{section}

\def\deg{{\rm deg}}

\def\Ext{{\rm Ext}}
\def\type{{\rm type}}
\def\End{{\rm End}}
\def\Tor{{\rm Tor}}

\def\Hom{{\rm Hom}}

\def\1{\mathbbold{1}}

\def\im{{\rm im}}

\def\GKdim{{\rm GKdim}\,}
\def\PIdeg{{\rm PIdeg}\,}
\def\gldim{{\rm gldim}\,}
\def\projdim{{\rm projdim}\,}
\newcommand{\PP}{\mathfrak P}
\newcommand*{\inth}{\textstyle \int}

\parskip 5pt

\begin{document}
\title[IHOEs in positive characteristic]
{Iterated Hopf Ore Extensions in positive characteristic}

\author{K.A. Brown}
\address{School of Mathematics and Statistics\\
University of Glasgow\\ Glasgow G12 8QQ\\
Scotland}
\email{ken.brown@glasgow.ac.uk}

\author{J.J. Zhang}
\address{Department of Mathematics\\
Box 354350\\
University of Washington\\
Seattle, WA 98195, USA}
\email{zhang@math.washington.edu}

\subjclass[2010]{16T05;16S36;16R50}


\keywords{Hopf Ore extension; Ore extension; Hopf algebra; 
PBW deformation; Nakayama automorphism; positive characteristic;
polynomial identity}

\begin{abstract}
Iterated Hopf Ore extensions (IHOEs) over an algebraically closed 
base field $\Bbbk$ of positive characteristic $p$ are studied. We
show that every IHOE over $\Bbbk$ satisfies a polynomial identity, 
with PI-degree a power of $p$, and that it is a filtered 
deformation of a commutative polynomial ring. We classify all 
$2$-step IHOEs over $\Bbbk$, thus generalising the classification 
of 2-dimensional connected unipotent algebraic groups over $\Bbbk$. 
Further properties of $2$-step IHOEs are described: for example 
their simple modules are classified, and every 2-step IHOE is shown 
to possess a large Hopf center and hence an analog of the restricted 
enveloping algebra of a Lie $\Bbbk$-algebra. As one of a number of 
questions listed, we propose that such a restricted Hopf algebra may 
exist for every IHOE over $\Bbbk$.
\end{abstract}

\maketitle

\dedicatory{}%
\commby{}%

\setcounter{section}{-1}
\section{Introduction}
\label{xxsec0}

\subsection{}
\label{xxsec0.1} 
Let $p$ be a prime and let $\Bbbk$ be an algebraically 
closed field of characteristic $p$. A famous result of 
Jacobson \cite{Ja1} states that the enveloping algebra 
$U(\mathfrak{g})$ of a finite dimensional Lie algebra 
$\mathfrak{g}$ over $\Bbbk$ is a finitely generated 
module over its center, denoted by $Z(\mathfrak{g})$. 
In fact it is easy to see from Jacobson's proof that the 
\emph{PI-degree} $d$ of $U(\mathfrak{g})$ is a power 
of $p$, (the PI-degree being by definition the square 
root of $\dim_{Q(Z(\mathfrak{g}))}Q(U(\mathfrak{g}))$, 
where $Q(U(\mathfrak{g}))$ is the central simple 
quotient division ring of $U(\mathfrak{g}))$.

Our first aim is to extend Jacobson's result to certain 
connected Hopf $\Bbbk$-algebras, as follows. Hopf Ore 
extensions $A[x;\sigma, \delta]$ were defined and studied 
by Panov \cite{Pa} in 2003. His definition was refined 
and extended in \cite{BOZZ} and then in \cite{Hua}, to 
\emph{iterated Hopf Ore extensions} (IHOEs) over a field $F$. 
(The field $F$ may have characteristic zero.)
An $n$-step IHOE over $F$ is a Hopf $F$-algebra $H$ with 
a finite chain of Hopf subalgebras 
$$F = H_0 \subset \cdots \subset H_n = H,$$ 
such that, for $i=1,\ldots,n$, $H_i = H_{i-1}[x_i;\sigma_i,\delta_i]$, 
for an algebra automorphism $\sigma_i$ and a 
$\sigma_i$-derivation $\delta_i$. See Definition \ref{xxdef1.1} 
for details, and recall that every IHOE is \emph{connected}, 
meaning that its coradical is $F$, by  
\cite[Proposition 2.5]{BOZZ}. Our first main result is

\begin{theorem}
\label{xxthm0.1}
Let $p$ and $\Bbbk$ be as above, and let $H$ be an $n$-step 
$\Bbbk$-IHOE.
\begin{enumerate}
\item[(1)] 
{\rm (Corollary  \ref{xxcor2.4})}
$H$ is a finitely generated module over its center 
$Z(H)$, which is a finitely generated normal $\Bbbk$-algebra. 
\item[(2)] 
{\rm (Theorem \ref{xxthm3.3})} 
The PI-degree of $H$ is a power of $p$.
\item[(3)] 
{\rm (Theorem \ref{xxthm3.8})}
The order of the antipode of $H$ divides $4p^{n-1}$, 
and the order of the Nakayama automorphism of 
$H$ divides $2p^n$.
\end{enumerate}
\end{theorem}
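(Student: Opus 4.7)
The plan is to induct on $n$ throughout, exploiting three structural features: the filtered deformation picture, so the associated graded ring $\gr H_i$ is the polynomial ring $\Bbbk[t_1,\ldots,t_i]$; the characteristic~$p$ device of manufacturing central elements via $p$-power Frobenius-type iterates of the generators $x_i$; and the connectedness of each $H_i$ (inherited, since a Hopf subalgebra of a connected Hopf algebra is connected), which forces the only group-like of $H_i$ to be $1$. Consequently $\Delta(x_i) = x_i\otimes 1 + 1\otimes x_i + w_i$ with $w_i\in H_{i-1}\otimes H_{i-1}$, and $S(x_i) = -x_i + r_i$ for some $r_i\in H_{i-1}$.

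For (1), the base case $H_0=\Bbbk$ is trivial. For the step $H_{i-1}\to H_i = H_{i-1}[x_i;\sigma_i,\delta_i]$, the Hopf-Ore compatibility identifies $\sigma_i|_{Z(H_{i-1})}$ as the winding automorphism of a character of $H_{i-1}$, which in characteristic~$p$ must have order a $p$-power $p^{a_i}$ on the affine commutative ring $Z(H_{i-1})$; in tandem, $\delta_i$ on $Z(H_{i-1})$ is $p$-step locally nilpotent. A Jacobson-style identity for iterates of $\sigma_i$ and $\delta_i$ in characteristic~$p$ then produces, after suitable correction $f_i\in H_{i-1}$, an element $x_i^{p^{k_i}} - f_i$ that commutes with $H_{i-1}$ and with $x_i$, hence lies in $Z(H_i)$. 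Combined with the inductive hypothesis, $H_i$ is finite over $Z(H_i)$. That $Z(H)$ is a finitely generated normal $\Bbbk$-algebra follows because $\gr Z(H)$ embeds in the integrally closed ring $\Bbbk[t_1,\ldots,t_n]$, so finite generation and normality lift through the filtration.

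Parts (2) and (3) are then consequences of (1) and the induction. For (2), using $\mathrm{PI\text{-}deg}(H)^2=[Q(H):Q(Z(H))]$ and examining the tower $Z(H_{i-1})\subset Z(H_i)$, the ratio $[Q(H_i):Q(Z(H_i))]/[Q(H_{i-1}):Q(Z(H_{i-1}))]$ equals $(p^{k_i})^2$, where $p^{k_i}$ is the smallest $p$-power from (1) at which $x_i$ becomes central modulo $H_{i-1}$. Hence $\mathrm{PI\text{-}deg}(H)=p^{k_1+\cdots+k_n}$. For (3), from $S(x_i)\equiv -x_i\pmod{H_{i-1}}$ we obtain $S^2(x_i) = x_i + (r_i - S(r_i)) \equiv x_i\pmod{H_{i-1}}$; combining the inductive bound $|S^2|_{H_{i-1}}|\mid 2p^{i-2}$ with a telescoping calculation along the step, whereby correction terms collapse under a suitable $p^{i-1}$-st iterate of $S^2$, yields $|S|\mid 4p^{n-1}$. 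For the Nakayama automorphism, I would invoke the standard formula $\nu = S^2\circ \xi$, where $\xi$ is the left winding by the homological determinant character of $H$; since $H$ is connected, this character is determined by its values on the skew-primitives~$x_i$ and so has order a $p$-power dividing $p^n$, giving $|\nu|\mid 2p^n$.

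The principal obstacle is the explicit step in part~(1): locating $f_i\in H_{i-1}$ so that $x_i^{p^{k_i}} - f_i$ is central requires a careful expansion of $(\sigma_i + \ad\,\delta_i)^{p^{k_i}}$ acting on a chosen central generator, in the skew-polynomial ring over $H_{i-1}$, and proving inductively that the required $k_i$ exists and that the correction lies in $H_{i-1}$ rather than the larger algebra. A secondary subtlety is the collapsing argument in part~(3), which requires matching the telescoping sum $\sum_{j=0}^{p^{i-1}-1} S^{2j}(r_i)$ to $0$ using the induction on $|S^2|_{H_{i-1}}|$; once these two computations are in hand, the rest of the theorem follows essentially formally.
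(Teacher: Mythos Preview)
Your sketch has genuine gaps in parts (1) and (2), and a different (but salvageable) approach in part (3).

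\textbf{Part (1).} The assertion that $\delta_i$ is ``$p$-step locally nilpotent'' on $Z(H_{i-1})$ is false: in the $2$-step IHOE $\Bbbk[X_1][X_2;\delta]$ with $\delta(X_1)=X_1$, one has $\delta^m(X_1)=X_1$ for every $m$. The paper does not look for explicit central elements of the form $x_i^{p^{k_i}}-f_i$ at all. Instead, the key lemma (Lemma~2.2) gives a dichotomy: either $\sigma_i$ is nontrivial on $Z(H_{i-1})$, in which case $\delta_i$ \emph{vanishes} on the fixed ring $Z(H_{i-1})^{\sigma_i}$ (a short computation), or $\sigma_i$ is trivial on $Z(H_{i-1})$, in which case $\delta_i$ vanishes on the Frobenius image $F(Z(H_{i-1}))$. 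In both cases one obtains a commutative polynomial subalgebra (not a priori central) over which $H_i$ is finite, hence $H_i$ is PI. Finiteness over the actual centre and normality of $Z(H_i)$ then come from the fact that iterated Ore extensions of fields are maximal orders, so that $H_i$ equals its own trace ring. Your filtration argument for normality is problematic: you would need to know that $\gr Z(H)$ is exactly $Z(\gr H)$, which is not automatic.

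\textbf{Part (2).} You assert without proof that the ratio $[Q(H_i):Q(Z(H_i))]/[Q(H_{i-1}):Q(Z(H_{i-1}))]$ is a perfect square $p$-power; this is the entire content of the inductive step. The paper appeals to a theorem of Chuang--Lee (Theorem~3.2) which expresses $\PIdeg H_i$ as $\PIdeg H_{i-1}$ times the ``outer degree'' of $\delta_i$ (if $\delta_i$ is X-outer) or of $\sigma_i$ (if $\delta_i$ is X-inner). That the outer degree of $\sigma_i$ is a $p$-power follows from Proposition~1.4, but for $\delta_i$ one needs a further result of Kharchenko (quoted from \cite{CL2}) that in characteristic $p$ the outer degree of a derivation on a prime PI ring is a $p$-power. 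None of this machinery is visible in your sketch, and it is not clear how you would replace it.

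\textbf{Part (3).} Your idea of inducting directly on $S^2$ can be made to work and is close in spirit to the paper's argument, but the paper works with $S^4$ rather than $S^2$, using the identity $S^4=\Xi^l_\eta\circ(\Xi^r_\eta)^{-1}$ from \cite{BZ1}, which immediately reduces the problem to the order bound on winding automorphisms already established in Proposition~1.4. Your phrasing ``matching the telescoping sum $\sum_{j=0}^{p^{i-1}-1}S^{2j}(r_i)$ to $0$'' is not the correct mechanism: what one actually needs is that if $(S^2)^d|_{H_{i-1}}=\mathrm{id}$ then $(S^2)^d(x_i)=x_i+t$ with $(S^2)^d(t)=t$, whence $(S^2)^{dp}(x_i)=x_i+pt=x_i$. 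That is the paper's computation, transplanted from $S^4$ to $S^2$.
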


The proofs of (1) and (2) of the theorem in Sections 
\ref{xxsec2} and \ref{xxsec3} make use of results of 
\cite{LeM, CL1, CL2} on Ore extensions satisfying a 
polynomial identity (PI), and ultimately hinge on 
fundamental results of Kharchenko \cite{Kh}; some 
aspects of the argument may have relevance for the 
study of general Ore extensions, that is, beyond the 
realm of Hopf algebras.

A number of important homological consequences follow 
by routine arguments from Theorem \ref{xxthm0.1}, with 
details included in Corollary \ref{xxcor2.4} - namely, 
$H$ has global and Gel'fand-Kirillov dimensions equal 
to $n$, and is a homologically homogeneous maximal order, 
and hence is skew Calabi-Yau. 

\subsection{}
\label{xxsec0.2} 
By contrast with Theorem \ref{xxthm0.1}(1), an 
IHOE over a field $F$ of characteristic 0 does not 
satisfy a polynomial identity unless it is 
commutative, \cite[Theorem 5.4]{BOZZ}. Our second 
main result, however, generalises an aspect of Lie 
theory where there is no such dichotomy between 
characteristic 0 and positive characteristic: namely, 
it is of course fundamental to the study of 
enveloping algebras that if $\mathfrak{g}$ is a 
Lie algebra of finite dimension $n$ over a field $F$, 
then $U(\mathfrak{g})$ has an ascending filtration 
whose associated graded algebra is the 
commutative polynomial algebra in $n$ variables over 
$F$. Zhuang proved in \cite[Theorem 6.10]{Zh} that 
the same conclusion holds for a connected Hopf 
$F$-algebra of finite Gel'fand-Kirillov dimension 
$n$, when $F$ is algebraically closed of 
characteristic 0. Zhuang's result does not extend 
to positive characteristic - for example, the group 
algebra over $\Bbbk$ of the cyclic group of order 
$p$ is a connected Hopf algebra. Nevertheless, we 
prove here:

\begin{theorem}[Theorem \ref{xxthm4.3}]
\label{xxthm0.2}  
Let $p$ and $\Bbbk$ be as in Theorem \ref{xxthm0.1},
and let $H$ be an $n$-step $\Bbbk$-IHOE. 
Then positive integer degrees can be assigned to 
the defining variables of $H$ so that the corresponding 
filtration has associated graded algebra which is the 
commutative polynomial $\Bbbk$-algebra on $n$ variables.
\end{theorem}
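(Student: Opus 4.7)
The plan is to prove the theorem by induction on $n$, appending one variable at a time and choosing its degree large enough to dominate the commutator obstructions in the associated graded.

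For $n = 1$, $H_1 = \Bbbk[x_1]$ is commutative and $d_1 := 1$ works. For the inductive step, suppose positive integers $d_1,\ldots,d_{n-1}$ have been chosen so that, with $\mathcal{F}^m H_{n-1}$ defined as the $\Bbbk$-span of PBW monomials $x_1^{a_1}\cdots x_{n-1}^{a_{n-1}}$ with $\sum a_i d_i \le m$, one has $\gr H_{n-1} \cong \Bbbk[y_1,\ldots,y_{n-1}]$. I want $d_n \ge 1$ such that extending by $\deg x_n := d_n$ yields $\gr H_n \cong \Bbbk[y_1,\ldots,y_n]$. Since $[x_n,h] = (\sigma_n(h)-h)x_n + \delta_n(h)$, this reduces to two conditions:
\begin{itemize}
\item[(a)] $\sigma_n$ preserves $\mathcal{F}^\bullet H_{n-1}$ and acts as the identity on $\gr H_{n-1}$, i.e., $\sigma_n(h)-h \in \mathcal{F}^{m-1}H_{n-1}$ whenever $h \in \mathcal{F}^m H_{n-1}$;
\item[(b)] $\delta_n(h) \in \mathcal{F}^{m+d_n-1}H_{n-1}$ whenever $h \in \mathcal{F}^m H_{n-1}$.
\end{itemize}
Granted (a) and (b), the freeness of $H_n$ as a left $H_{n-1}$-module on $\{x_n^k\}_{k \ge 0}$ transports to the graded level and gives $\gr H_n \cong \gr H_{n-1}[y_n] \cong \Bbbk[y_1,\ldots,y_n]$, with the class of $x_n$ central.

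Condition (b) is comparatively routine: once (a) is known, an induction on filtered degree, using that $\delta_n$ is a $\sigma_n$-derivation and that $\sigma_n$ preserves $\mathcal{F}^\bullet$, reduces it to the inclusion $\delta_n(x_i) \in \mathcal{F}^{d_i+d_n-1}H_{n-1}$ for $i < n$. Each $\delta_n(x_i)$ has some finite filtered degree, so taking $d_n$ strictly larger than $\max_{i<n}(\deg \delta_n(x_i) - d_i)$ accomplishes the task.

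The main obstacle is condition (a), which should rely on the Hopf algebra structure together with the connectedness of $H$ established in \cite[Proposition 2.5]{BOZZ}. Since the only grouplike element of $H$ is $1$, the Hopf Ore extension comultiplication is forced to have the shape $\Delta(x_n) = x_n \otimes 1 + 1 \otimes x_n + w_n$ with $w_n \in H_{n-1}\otimes H_{n-1}$. Applying $\Delta$ to the Ore relation $x_n h = \sigma_n(h)x_n + \delta_n(h)$ and comparing components in $H_n \otimes H_n$ should produce identities realising $\sigma_n(h)-h$ as a combination of terms involving $w_n$ and lower-order pieces of $\Delta(h)$. From this I expect to deduce that $\sigma_n(x_i) - x_i$ sits in $\mathcal{F}^{d_i-1}H_{n-1}$ on each generator $x_i$; the multiplicativity of $\sigma_n$ combined with the inductive commutativity of $\gr H_{n-1}$ then propagates the conclusion to all of $H_{n-1}$. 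This final propagation step, balancing the algebra-automorphism property of $\sigma_n$ against the filtration accounting on general (non-generator) elements, is the part I expect to be most delicate.
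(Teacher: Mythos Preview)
Your overall architecture is right, but condition (a) has a genuine gap, and it is precisely the point where the Hopf structure enters. You write that applying $\Delta$ to the Ore relation ``should produce identities realising $\sigma_n(h)-h$ as a combination of terms involving $w_n$''. This is backwards: the relevant coproduct datum is $w_i$, not $w_n$. The paper's Proposition~\ref{xxpro1.2}(4) (relying on \cite[Theorem 1.3]{Hua}) shows that $\sigma_n$ is a winding automorphism $\tau^r_{\chi_n}$ of $H_{(n-1)}$, whence
\[
\sigma_n(x_i)-x_i \;=\; a_{in} \;=\; \chi_n(x_i) + \sum w_{i(1)}\chi_n(w_{i(2)}) \;\in\; H_{(i-1)},
\]
and so $\deg a_{in}\le D(w_i):=\max\{\deg w_{i(1)}\}$, a bound that is \emph{uniform in $n$}. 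This is the crucial Hopf-theoretic input you are missing.

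This matters because your induction, as stated, cannot work: at stage $n$ you need $a_{in}\in\mathcal F^{d_i-1}$, but $d_i$ was fixed back at stage $i$ with no knowledge of $\sigma_n$. Nothing in your inductive hypothesis ``$\gr H_{n-1}$ is polynomial'' forces $d_i>\deg a_{in}$; one can easily build examples where a legitimate choice of $d_1,\dots,d_{n-1}$ for $H_{n-1}$ fails this inequality. The paper resolves this not by a naive induction but by choosing all the $d_i$ in one pass, imposing at step $i$ the anticipatory condition $d_i>D(w_i)$ (see \eqref{E4.0.4}), which controls $a_{i\ell}$ for \emph{every} future $\ell>i$ simultaneously. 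Once that is in place, the commutator estimate $x_ix_j=x_jx_i+(\text{lower degree})$ and the PBW bookkeeping (your ``propagation step'') are routine --- the part you flagged as most delicate is in fact the easy part. To repair your argument, strengthen the inductive hypothesis to include $d_i>D(w_i)$; but note that making sense of this requires first proving the winding-automorphism formula above.
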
 

One can succinctly state this result as: every $H$ 
as in the theorem is a filtered deformation of a 
polynomial algebra. Etingof asks in 
\cite[Question 1.1]{Et} whether \emph{every} 
filtered deformation of a commutative domain in 
positive characteristic has to satisfy a polynomial 
identity. Taken together, Theorems \ref{xxthm0.1} 
and \ref{xxthm0.2} give some support for a positive 
answer to this question.

\subsection{}
\label{xxsec0.3} 
In Section 5, we classify IHOEs in the lowest dimensions. 
As noted in Lemma \ref{xxlem5.1}, there is only one 1-step 
IHOE, whatever the characteristic of the field, namely 
$\Bbbk [x]$ with $x$ primitive. But in 
dimension two the contrast between characteristic 0 
and positive characteristic is stark. Zhuang showed 
in \cite[Proposition 7.6]{Zh} that in characteristic 0, 
there are only two connected Hopf algebras of 
Gel'fand-Kirillov dimension two, and both are 
cocommutative IHOEs: namely $\Bbbk [x,y]$ with $x$ 
and $y$ primitive, and the enveloping algebra of 
the 2-dimensional non-abelian Lie algebra. In 
positive characteristic, however, it's a different 
story, as we now explain. 

Let ${\bf d_s} = \{d_s\}_{s\geq 0}$, ${\bf b_s} = 
\{b_s\}_{s\geq 0}$ and ${\bf c_{s,t}} = 
\{c_{s,t}\}_{0\leq s<t}$ be sequences of scalars
in $\Bbbk$ with only finitely many nonzero elements. 
Let $H({\bf d_s,b_s,c_{s,t}})$ denote the Ore extension
$\Bbbk[X_1][X_2;\mathrm{Id},\delta]$ where 
\begin{equation}
\label{E0.2.1}\tag{E0.2.1}
\delta(X_1)=\sum_{s\geq 0} d_s X_1^{p^s}.
\end{equation}
Define maps $\Delta$, $\epsilon$, and $S$ on $\{X_1, X_2\}$
by setting
$$\begin{aligned}
\Delta(X_1)&=X_1\otimes 1+1\otimes X_1,\\
\Delta(X_2)&=X_2\otimes 1+1\otimes X_2+ w,\\
\epsilon(X_1)&=\epsilon(X_2)=0,\\
S(X_1)&=-X_1,\\
S(X_2)&=-X_2-m(\mathrm{Id}\otimes S)(w),
\end{aligned}
$$ 
where $m$ denotes the multiplication operator, and
\begin{align}
\label{E0.2.2}\tag{E0.2.2}
w&=\sum_{s\geq 0} b_s 
  \left(\sum_{i=1}^{p-1} \frac{(p-1)!}{i! (p-i)!} (X_1^{p^s})^{i}
  \otimes (X_1^{p^s})^{p-i}\right)\\
	\notag
&\qquad \qquad+\sum_{0 \leq s<t} c_{s,t} \left(X_1^{p^s}\otimes 
  X_1^{p^t}- X_1^{p^t}\otimes X_1^{p^s}\right).
\end{align}
  
\begin{theorem}
[Propositions \ref{xxpro5.6} and \ref{xxpro5.11}]
\label{xxthm0.3}  
Let $p$ and $\Bbbk$ be as in Theorem \ref{xxthm0.1}. 
Let $H$ be a 2-step IHOE over $\Bbbk$, so 
$H = \Bbbk[X_1][X_2;\sigma,\delta]$ for 
$\sigma \in \mathrm{Aut}_{\Bbbk-\mathrm{alg}}(\Bbbk [X_1])$ 
and a $\sigma$-derivation $\delta$. 
\begin{enumerate}
\item[(1)]{\rm (Proposition \ref{xxpro5.6}(1))} 
For all choices of the scalars 
${\bf d_s,b_s, c_{s,t}}$, $H({\bf d_s,b_s,c_{s,t}})$ is 
a Hopf algebra.
\item[(2)] {\rm (Proposition \ref{xxpro5.6}(2))}
$H$ is isomorphic to $H({\bf d_s,b_s,c_{s,t}})$ for 
some choice of scalars
$\{d_s\}_{s\geq 0}$, $\{b_s\}_{s\geq 0}$ and 
$\{c_{s,t}\}_{0\leq s<t}$.
\item[(3)] {\rm (Proposition \ref{xxpro5.11})}
Two such $H({\bf d_s,b_s,c_{s,t}})$ and 
$H({\bf d'_s,b'_s,c'_{s,t}})$ are isomorphic as Hopf 
algebras if and only if there are nonzero scalars 
$\alpha,\beta$ in $\Bbbk$ such that,
for all the sequences of scalars,
\begin{equation}
\label{E0.3.1}\tag{E0.3.1}
d'_s=d_s \alpha^{p^s-1}\beta^{-1}, \quad 
b'_s=b_s \alpha^{p^{s+1}}\beta^{-1}, \quad
c'_{s,t}=c_{s,t} \alpha^{p^s+p^t}\beta^{-1}.
\end{equation}
If \eqref{E0.3.1} holds, there is a Hopf algebra 
isomorphism 
$\phi:H({\bf d_s,b_s,c_{s,t}})\to 
H({\bf d'_s},{\bf b'_s},{\bf c'_{s,t}})$ such that
$$\phi(X_1)=\alpha X_1, \quad {\text{and}}\quad \phi(X_2)=\beta X_2.$$
\end{enumerate} 
\end{theorem}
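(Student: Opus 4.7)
The plan is to prove the three parts sequentially, exploiting the rigidity of the $1$-step IHOE (Lemma \ref{xxlem5.1} forces $X_1$ to be primitive) together with the cohomological picture of cocycle deformations of $\Bbbk[X_1]$ in characteristic $p$.

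\textbf{Part (1).} I would verify each Hopf-algebra axiom on generators. The extension of $\Delta$ to an algebra homomorphism reduces to checking that the Ore relation is preserved: $[\Delta(X_2),\Delta(X_1)]=\Delta(\delta(X_1))$. Since $w\in\Bbbk[X_1]\otimes\Bbbk[X_1]$ commutes with $\Delta(X_1)$, the commutator collapses to $\delta(X_1)\otimes 1+1\otimes\delta(X_1)$, which equals $\Delta(\delta(X_1))$ because each $X_1^{p^s}$ is primitive. Coassociativity reduces to the 2-cocycle identity $w\otimes 1+(\Delta\otimes\id)(w)=1\otimes w+(\id\otimes\Delta)(w)$, verified summand-by-summand: the symmetric summand in $Y=X_1^{p^s}$ is the mod-$p$ reduction of $\tfrac{1}{p}\bigl(\Delta(Y^p)-Y^p\otimes 1-1\otimes Y^p\bigr)$, which is an integral 2-cocycle; the antisymmetric summand is a manifest cocycle built from the pair of primitives $X_1^{p^s}, X_1^{p^t}$. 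The antipode conditions then follow from the bialgebra structure and the explicit convolution-inverse formula defining $S(X_2)$.

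\textbf{Part (2).} Let $H=\Bbbk[X_1][X_2;\sigma,\delta]$ be a 2-step IHOE. By Lemma \ref{xxlem5.1}, we may take $X_1$ primitive. Connectedness of $H$ together with the Hopf Ore extension hypothesis forces $\Delta(X_2)=X_2\otimes 1+1\otimes X_2+w$ for some $w\in\Bbbk[X_1]\otimes\Bbbk[X_1]$, after normalizing $\epsilon(X_2)=0$. Since $\sigma$ must be a Hopf automorphism of $\Bbbk[X_1]$, one has $\sigma(X_1)=\alpha X_1$ with $\alpha\in\Bbbk^\times$. Expanding $\Delta$ applied to the Ore relation and comparing coefficients of $X_1\otimes X_2$ (or $X_2\otimes X_1$), whose contribution has factor $(\alpha-1)$ while the other side has no such cross term, forces $\alpha=1$, so $\sigma=\id$. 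Applying $\Delta$ to $[X_2,X_1]=\delta(X_1)$ then gives $\Delta(\delta(X_1))=\delta(X_1)\otimes 1+1\otimes\delta(X_1)$, so $\delta(X_1)$ is primitive in $\Bbbk[X_1]$, hence lies in $\bigoplus_{s\geq 0}\Bbbk X_1^{p^s}$, yielding the scalars $d_s$ of \eqref{E0.2.1}. Replacing $X_2$ by $X_2-g(X_1)$ modifies $w$ by the coboundary $\Delta(g)-g\otimes 1-1\otimes g$, and the 2-cocycles on the commutative Hopf algebra $\Bbbk[X_1]$ modulo coboundaries are spanned (in characteristic $p$) by precisely the divided-$p$-power symmetric and Frobenius-twist antisymmetric cocycles appearing in \eqref{E0.2.2}, yielding the scalars $b_s$ and $c_{s,t}$.

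\textbf{Part (3).} For the forward direction, let $\phi$ be a Hopf isomorphism. The $p$-Frobenius structure on the primitive space distinguishes a canonical line $\Bbbk X_1$ (complementary to the image of $Y\mapsto Y^p$ on primitives) in the non-degenerate case, so after composing $\phi$ with a suitable Hopf automorphism of the target we may assume $\phi(X_1)=\alpha X_1$ for some $\alpha\in\Bbbk^\times$. Since $\Bbbk[X_1]$ is a characteristic Hopf subalgebra, $\phi(X_2)\equiv\beta X_2\pmod{\Bbbk[X_1]}$ for some $\beta\in\Bbbk^\times$, and a further adjustment by the translation $X_2\mapsto X_2-f(X_1)$ with $f(X_1)$ primitive brings $\phi(X_2)$ to $\beta X_2$ on the nose. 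Applying $\phi$ to the Ore relation gives $\alpha\beta\sum_s d'_s X_1^{p^s}=\sum_s d_s\alpha^{p^s}X_1^{p^s}$, producing $d'_s=d_s\alpha^{p^s-1}\beta^{-1}$; applying $\phi\otimes\phi$ to $w$ and matching against $w'$ term by term yields the remaining identities of \eqref{E0.3.1}. The converse is a direct verification using the explicit forms of the Ore relation and the coproduct. The principal technical hurdle is in part (2): correctly identifying the space of 2-cocycles on $\Bbbk[X_1]$ modulo coboundaries in characteristic $p$; the degenerate cases in part (3) (where $X_2$ is also primitive) must also be handled carefully but pose no additional obstruction since then \eqref{E0.3.1} holds trivially for any $\alpha, \beta$.
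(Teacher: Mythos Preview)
Your argument for Part (2) contains a genuine gap. You assert that ``$\sigma$ must be a Hopf automorphism of $\Bbbk[X_1]$,'' and from this deduce $\sigma(X_1)=\alpha X_1$. This assertion is unjustified and in fact false: for a Hopf Ore extension $R[x;\sigma,\delta]$, the automorphism $\sigma$ is only known to be a \emph{winding automorphism} of $R$ (this is the content of \cite[Theorem 2.4]{BOZZ} and \cite[Theorem 1.3]{Hua}, recorded here as Proposition \ref{xxpro1.2}(4)). For $R=\Bbbk[X_1]$ with $X_1$ primitive, winding automorphisms have the form $X_1\mapsto X_1+\chi$ for a scalar $\chi\in\Bbbk$, not $X_1\mapsto\alpha X_1$. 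Your coefficient-comparison argument therefore does not apply, and your proof never confronts the case $\chi\neq 0$.

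The paper handles this in Lemma \ref{xxlem5.5} and Proposition \ref{xxpro5.6}(2). When $\chi\neq 0$ one normalises to $\chi=1$; the cocycle relation $\partial(\delta(X_1))=-\chi w$ then forces $w=0$ in $\PP^2_{1,1}(\Bbbk[X_1])$ (Lemma \ref{xxlem5.5}(3,4)), so both $X_1$ and $X_2$ are primitive. The Ore relation becomes $X_2X_1=X_1X_2+X_2+\sum_s d_sX_1^{p^s}$, and only after absorbing the $d_s$ terms into $X_2$ and then \emph{swapping the roles of $X_1$ and $X_2$} does one land in the family $H(\mathbf{d_s},\mathbf{0},\mathbf{0})$ with $\sigma=\id$. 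This variable-swap is an essential step that your argument omits.

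Your Part (3) sketch is also looser than the paper's: the claim that ``the $p$-Frobenius structure distinguishes a canonical line $\Bbbk X_1$'' is not enough to pin down $\phi(\Bbbk[X_1])=\Bbbk[X_1']$. The paper instead splits into cases: when $H$ is noncommutative, Lemma \ref{xxlem5.9} uses the commutator ideal and a coinvariant computation to show $\Bbbk[X_1]=H^{co\,K}$ is preserved; when $H$ is commutative with $w\neq 0$, Lemma \ref{xxlem5.8}(2) shows $\Bbbk[X_1]=\Bbbk\langle P(H)\rangle$. Either of these gives an intrinsic characterisation of the subalgebra $\Bbbk[X_1]$, which your Frobenius remark does not.
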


Restricting the above result to the cases where $H$ 
is commutative, that is where ${\bf d_s} = {\bf 0}$, 
yields the classification of connected unipotent 
algebraic groups over $\Bbbk$ of dimension 2. This is discussed in 
$\S$\ref{xxsec6.4}.

\subsection{}
\label{xxsec0.4} 
We study further properties of the 2-step IHOEs over 
$\Bbbk$ in $\S$\ref{xxsec8}. Thus we describe their 
antipodes and determine the Calabi-Yau members of 
the family in Proposition \ref{xxpro8.1}, and 
examine the finite dimensional representation theory 
of all the 2-step $\Bbbk$-IHOEs in Proposition 
\ref{xxpro8.2}, specifying the Azumaya locus, 
and determining all the simple modules and the 
extensions between them. 

The final subsection, $\S$\ref{xxsec8.3}, is 
inspired by another seminal paper of Jacobson, 
\cite{Ja2}, where he showed that every restricted 
Lie algebra $\mathfrak{g}$ of finite dimension $n$ 
has a \emph{restricted enveloping algebra} 
$u(\mathfrak{g})$, a Hopf algebra of dimension 
$p^n$ which is a Hopf factor of $U(\mathfrak{g})$. 
We show that a similar phenomenon occurs for 
the 2-step IHOEs. The following is an abbreviated 
version of Theorem \ref{xxthm8.8} and Propositions 
\ref{xxpro8.9}-\ref{xxpro8.10} together.

\begin{theorem}
\label{xxthm0.4} 
Let $H = H({\bf d_s}, {\bf b_s}, {\bf c_{s,t}})$ be 
a 2-step IHOE over $\Bbbk$ and assume that $H$ is not
commutative.
\begin{enumerate}
\item[(1)] 
$H$ contains a unique maximal central Hopf subalgebra 
$C(H)$, and $H$ is a free $C(H)$-module of rank $p^2$ when 
$b_0= 0$, and rank $p^3$ otherwise.
\item[(2)] 
Let $C(H)_{+}$ denote the augmentation ideal of $C(H)$. The 
resulting Hopf algebras $H/C(H)_{+} H$, of dimensions 
$p^2$ or $p^3$, are classified. 
\end{enumerate}
\end{theorem}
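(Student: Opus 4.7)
The plan is to first establish existence of $C(H)$ via a general argument and then locate its generators by direct computation. Existence is immediate from Theorem~\ref{xxthm0.1}(1): since $H$ is a finite module over its Noetherian center $Z(H)$, any ascending chain of Hopf subalgebras of $Z(H)$ stabilizes, and the Hopf subalgebra generated by the union of two central Hopf subalgebras is again central, yielding a unique maximal $C(H)$. For the obvious generators, note that for $s\geq 1$ the element $X_1^{p^s}$ is primitive (freshman's dream) and central, since $\delta(X_1^{p^s})=p^sX_1^{p^s-1}\delta(X_1)=0$; hence $\Bbbk[X_1^p]\subseteq C(H)$.

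The decisive step is to locate a central primitive element involving $X_2$. Using $\binom{p}{i}\equiv 0\pmod{p}$ for $0<i<p$, we have $[X_2^{p^N},X_1]=\delta^{p^N}(X_1)$; combined with the elementary recursion $\delta^n(X_1)=d_0^{n-1}\delta(X_1)$ for $n\geq 1$ (which follows from $\Bbbk[X_1^p]\subseteq\ker\delta$), the element $Y_1:=X_2^p-d_0^{p-1}X_2$ commutes with both $X_1$ and $X_2$ and so is central. To test primitivity, write $\Delta(X_2)=u+w$ with $u=X_2\otimes 1+1\otimes X_2$ and $w$ as in \eqref{E0.2.2}, and expand via the Jacobson $p$-power formula to obtain
\begin{equation*}
\Delta(Y_1)=Y_1\otimes 1+1\otimes Y_1+R,\qquad R:=w^p+\Lambda_p(u,w)-d_0^{p-1}w,
\end{equation*}
where $\Lambda_p$ is Jacobson's Lie polynomial. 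Since $w\in\Bbbk[X_1]\otimes\Bbbk[X_1]$ and $[X_2,X_1^i]=iX_1^{i-1}\delta(X_1)\in\Bbbk[X_1]$, the residue $R$ lies in the commutative subalgebra $\Bbbk[X_1]\otimes\Bbbk[X_1]$. When $b_0=0$, a combinatorial check shows the three terms of $R$ cancel exactly, so $Y_1$ is primitive; then $C(H)=\Bbbk[X_1^p,Y_1]$, and the basis $\{X_1^iX_2^j:0\leq i,j<p\}$ exhibits $H$ as free of rank $p^2$ over $C(H)$. When $b_0\neq 0$, the $b_0$-piece of $w$ prevents cancellation and $R\neq 0$; but $Y_1$ is central, so $u':=Y_1\otimes 1+1\otimes Y_1$ commutes with $R$ in $H\otimes H$, and Freshman's dream gives $\Delta(Y_1^p)=Y_1^p\otimes 1+1\otimes Y_1^p+R^p$ with $R^p\in\Bbbk[X_1^p]\otimes\Bbbk[X_1^p]$. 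One then produces an explicit $h\in\Bbbk[X_1^p]$ solving $\Delta(h)-h\otimes 1-1\otimes h=R^p$ (the cobounding correction), so that $Y_2:=Y_1^p-h$ is primitive and central; hence $C(H)=\Bbbk[X_1^p,Y_2]$, and the basis $\{X_1^iX_2^j:0\leq i<p,\ 0\leq j<p^2\}$ witnesses free rank $p^3$.

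For part (2), the quotient $H/C(H)_+H$ inherits an explicit presentation on generators $\bar X_1,\bar X_2$ subject to $\bar X_1^p=0$, $\bar Y=0$ (for $Y=Y_1$ or $Y_2$), and the reduced Ore relation $[\bar X_2,\bar X_1]=\bar\delta(\bar X_1)$; the coproduct is read off from \eqref{E0.2.2} modulo the central ideal. The resulting finite-dimensional Hopf algebras of dimension $p^2$ or $p^3$ are then classified by tracking how the surviving scalars $d_s,b_s,c_{s,t}$ transform under the rescaling \eqref{E0.3.1} of Theorem~\ref{xxthm0.3}(3). The main obstacle is the coproduct calculation of the second paragraph: since $u$ and $w$ do not commute in $H\otimes H$, managing the Jacobson Lie polynomial $\Lambda_p(u,w)$ is combinatorially intricate, and verifying both the exact cancellation in $R$ when $b_0=0$ and the existence of the cobounding correction $h$ when $b_0\neq 0$ is the technical heart of the theorem.
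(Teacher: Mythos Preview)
Your overall architecture matches the paper's, but there is a genuine gap: both of your primitivity claims are false, and the second one cannot be repaired.

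For the case $b_0=0$, you assert that the three terms of $R=w^p+\Lambda_p(u,w)-d_0^{p-1}w$ cancel exactly, making $Y_1$ primitive. This is wrong. Take for instance $b_0=0$, $b_1\neq 0$, $d_0\neq 0$ and all other parameters zero. Then $w=b_1Z_1\in\Bbbk[X_1^p]\otimes\Bbbk[X_1^p]$, so $\mathrm{ad}_u(w)=0$ and hence $R=w^p-d_0^{p-1}w$, which is nonzero by degree considerations. What is actually true (and what the paper proves as Lemma~\ref{xxlem8.6}(3)) is the weaker statement $R\in\Bbbk[X_1^p]\otimes\Bbbk[X_1^p]$; this suffices to make $\Bbbk[X_1^p,Y_1]$ a Hopf subalgebra, which is all you need.

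For the case $b_0\neq 0$, the gap is fatal to your strategy. You claim one can find $h\in\Bbbk[X_1^p]$ with $\partial(h)=R^p$, making $Y_2=Y_1^p-h$ primitive. But $R^p$ is in general a \emph{nonzero} class in $\PP^2_{1,1}(\Bbbk[X_1^p])$: for example with $d_0\neq 0$, $b_0\neq 0$ and all other parameters zero, Lemma~\ref{xxlem8.7}(1) gives $R\equiv_p -d_0^{p-1}b_0 Z_0$, so $R^p$ contains the term $-d_0^{p(p-1)}b_0^p Z_0^p$; since $\lambda_i\in\mathbb{F}_p$ one has $Z_0^p=Z_1$, which represents a nonzero cohomology class by Proposition~\ref{xxpro5.4}(4). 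Hence no such $h$ exists, and $C(H)$ simply has no primitive second generator in general --- it is the coordinate ring of a $2$-dimensional connected unipotent group that need not be $(\Bbbk,+)^2$. The paper circumvents this entirely: it shows directly that $\Bbbk[X_1^p,z^p]$ is a Hopf subalgebra (since $z$ is central, $\Delta(z^p)=\Delta(z)^p$ lands in $\Bbbk[X_1^p,z^p]^{\otimes 2}$ by freshman's dream applied to Lemma~\ref{xxlem8.6}(1)), and then pins down $C(H)$ by a rank argument sandwiching it between $\Bbbk[X_1^p,z^p]$ and $Z(H)=\Bbbk[X_1^p,z]$, using that the rank of $H$ over $C(H)$ must be a $p$-power (Proposition~\ref{xxpro8.4}).
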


Finite dimensional connected Hopf $\Bbbk$-algebras have 
dimension a power of $p$, \cite[Proposition 1.1(1)]{Ma}. 
Those of dimension at most $p^3$ have been classified, 
in a series of papers \cite{Wa1, Wa2, NWW1, NWW2} by 
Xingting Wang and colleagues. We explain in 
$\S$\ref{xxsec8.3} how the Hopf algebras of 
Theorem \ref{xxthm0.4} fit into their classification.

\subsection{}\label{xxsec0.5} 
There have been a number of recent papers on infinite 
dimensional connected Hopf algebras of finite 
Gel'fand-Kirillov dimension, starting with \cite{Zh} 
and including the aforementioned \cite{BOZZ, Hua}, but 
also, for example, \cite{BGZ, BZ2, WZZ3, ZSL}. Much of 
this work has focused on the characteristic 0 setting 
(although \emph{not}, notably, \cite{Hua}, on which we 
rely heavily). Thus the present paper can be viewed 
as taking some further steps to open the topic in 
positive characteristic. As befits a gathering of 
first steps, the paper contains a number of open 
questions, scattered throughout. We can also point out 
here an open question of a more general, less precise 
type. Namely, it is possible that many or even all of 
the results proved here for positive characteristic 
IHOEs are valid, when rephrased appropriately, in the 
wider context of connected Hopf domains over $\Bbbk$ 
of finite Gel'fand-Kirillov dimension.

\subsection{}
\label{xxsec0.6} 
The paper is organized as follows. Section 1 contains 
some definitions and preliminaries, leading to Proposition 
\ref{xxpro1.2}, where it is shown that the automorphisms 
used to construct a positive characteristic IHOE have finite order, 
a key ingredient in the proof of Theorem \ref{xxthm0.1}. 
In Section 2, we use the Frobenius map and Noether's theorem 
on the finite generation of invariants to obtain the 
characterisation (Theorem \ref{xxthm2.3}) of when 
an iterated Ore extension in positive characteristic satisfies 
a PI. From this, it is easy to invoke Proposition \ref{xxpro1.2} 
to deduce Theorem \ref{xxthm0.1}(1) (Corollary \ref{xxcor2.4}). 
Theorem \ref{xxthm0.2} is proved in Section 3, using results of 
Chuang and Lee \cite{CL1}, depending ultimately on work of 
Kharchenko \cite{Kh} to obtain the needed information on the 
PI-degree of an iterated Ore extension which is known to satisfy 
a PI. In Section \ref{xxsec4}, we show that every IHOE has 
a filtration such that the associated graded ring is isomorphic
to the commutative polynomial algebra. The classification of 
2-step IHOEs is given in Section \ref{xxsec5}, including an 
analysis of their automorphism groups. A list of comments
and questions are collected in Section \ref{xxsec6}. A description 
of the Hopf center of all 2-step IHOEs is obtained in Section 
\ref{xxsec8}, enabling the construction and classification of 
their restricted Hopf algebra factors. The description of the 
Hopf center requires a special case of the noncommutative 
binomial theorem of Jacobson, which we recall in Section 
\ref{xxsec7}.

\subsection*{Acknowledgments}

The authors would like to thank Sara Billey, Jason Gaddis,
Sean Griffin, Liyu Liu and Xingting Wang for many useful 
conversations and valuable comments on the subject. The initial 
ideas for this paper were developed in March 2019 during the 
Oberwolfach miniworkshop ``Cohomology of Hopf Algebras and 
Tensor Categories''. We are extremely grateful to the 
Mathematisches Forschungsinstitut, and to the organisers of 
the workshop, for their hospitality.
K.A. Brown was supported by Leverhulme Emeritus Fellowship EM-2017-081-9
and J.J. Zhang by the US National Science Foundation (Nos. DMS-1700825 
and DMS-2001015).

\medskip

\section{Preliminaries}
\label{xxsec1}
\subsection{Definitions and their consequences}
\label{xxsec1.1} 
Throughout the paper  we shall use the standard notation $\Delta$, 
$\mu$, $\epsilon$, and $S$ for the comultiplication, 
multiplication, counit and antipode of a Hopf algebra $H$, with 
$\Delta(h) = h_1 \otimes h_2$ for $h \in H$. Unexplained Hopf 
algebra terminology can be found in \cite{Mo1}, for example. All 
Hopf algebras appearing in this paper will be (factors of) noetherian domains, 
so $S$ is necessarily bijective by \cite[Theorem A(ii)]{Sk}. For 
an algebra $A$, $Z(A)$ will denote the center of $A$, and if a 
group $G$ acts on $A$ as automorphisms, $A^G$ will denote the 
fixed subring of this action; when $G = \langle \sigma \rangle$ 
generated by single element $\sigma$, $A^{\langle \sigma\rangle}$ 
will be abbreviated to $A^{\sigma}$.

The following definition of the algebras in the paper's title 
is - on the face of it - significantly weaker than the original 
definition given in \cite[Definition 1.0]{Pa} and the modified 
definition in \cite[Definition 2.1]{BOZZ}. The improvement here 
is due to recent work of Huang \cite{Hua}, as we explain after 
the definitions. Recall that, given an $F$-algebra automorphism 
$\sigma$ of an $F$-algebra $R$, a $\sigma$-derivation $\delta$ 
of $R$ is an $F$-linear endomorphism of $R$ such that 
$\delta(ab) = \sigma(a)\delta(b) + \delta(a)b$ for all $a,b \in R$.

\begin{definition}
\label{xxdef1.1} Let $F$ be a field of any characteristic.
\begin{enumerate}
\item[(1)] 
Let $R$ be a Hopf $F$-algebra. A {\it Hopf Ore extension} 
(abbreviated to {\it HOE}) of $R$ is an algebra $H$ such that
\begin{enumerate}
\item[(1a)]
$H$ is a Hopf $F$-algebra with Hopf subalgebra $R$;
\item[(1b)]
there exist an algebra automorphism $\sigma$ of $R$ and a
$\sigma$-derivation $\delta$ of $R$ such that $H = R[x;\sigma,
\delta]$.
\end{enumerate}
\item[(2)] 
An {\it {\rm{(}}$n$-step{\rm{)}} iterated Hopf Ore extension of 
$F$} {\rm{(}}abbreviated to {\it {\rm{(}}$n$-step{\rm{)}} 
IHOE {\rm{(}}of $F${\rm{)}}}{\rm{)}} is a Hopf algebra 
\begin{equation} 
\label{E1.1.1}\tag{E1.1.1} 
H = F[X_1][X_2; \sigma_2 , \delta_2] \dots 
[X_n; \sigma_n , \delta_n],
\end{equation} 
where
\begin{enumerate}
\item[(2a)]
$H$ is a Hopf algebra;
\item[(2b)]
$H_{(i)} := F\langle X_1, \ldots , X_i \rangle$ is 
a Hopf subalgebra of $H$ for $i = 1, \ldots , n;$
\item[(2c)]
$\sigma_i$ is an algebra automorphism of $H_{(i-1)}$, 
and $\delta_i$ is a $\sigma_i$-derivation of $H_{(i-1)}$, 
for $i = 2, \ldots , n$.
\end{enumerate}
\end{enumerate}
\end{definition}

Throughout the paper, whenever $H$ is an HOE [respectively, 
an $n$-step IHOE], we assume that $H$ satisfies the 
conditions of Definition \ref{xxdef1.1}(1) 
[respectively, (2)], with the same notation.

The definition of an HOE in \cite[Definition 2.1]{BOZZ} 
required, in  addition to Definition \ref{xxdef1.1}(1a,1b), that
\begin{enumerate}
\item[(1c)]
there are $a,b\in R$ and $v, w\in R\otimes R$ such that
\begin{equation}
\label{E1.1.2}\tag{E1.1.2}
\Delta(x)=a\otimes x+x\otimes b+ v(x\otimes x)+ w.
\end{equation}
\end{enumerate}
However, by Huang's theorem  \cite[Theorem 1.3]{Hua},
if $R \subseteq T$ are noetherian $F$-algebras satisfying 
hypotheses (1a) and (1b) of Definition \ref{xxdef1.1}(1), and 
$R\otimes R$ is a domain, then, up to a change of variable,
\begin{enumerate}
\item[(1d)]
there are $a\in R$ and $w\in R\otimes R$ such that
\begin{equation}
\label{E1.1.3}\tag{E1.1.3}
\Delta(x)=a\otimes x+x\otimes 1+w.
\end{equation}
\end{enumerate}
This means that under the conditions in 
\cite[Theorem 1.3]{Hua}, namely that $R \otimes R$ is a 
noetherian domain, Definition \ref{xxdef1.1}(1) is equivalent 
to \cite[Definition 2.1]{BOZZ}. In turn, this yields the 
following consequences for IHOEs. Recall that a Hopf 
$F$-algebra is \emph{connected} if its coradical is 
$F$ - see e.g. \cite[Definition 5.1.5]{Mo1}.

\begin{proposition}
\label{xxpro1.2} 
Let $H$ be an IHOE of $F$. 
\begin{enumerate}
\item[(1)]
$H$ is noetherian and $H\otimes H$ is a domain.
\item[(2)]
After a change of variables {\rm{(}}but not of the 
subalgebras $H_{(i)}${\rm{)}}, 
\begin{equation}
\label{E1.2.1}\tag{E1.2.1}
\Delta(X_i)=1 \otimes X_i+ X_i\otimes 1+ w_i
\end{equation}
for some $w_i\in H_{(i-1)}\otimes H_{(i-1)}$, for 
$i = 1, \ldots , n$.
\item[(3)] 
$H$ is a connected Hopf algebra.
\item[(4)] 
For $i = 1, \ldots , n$ there is a character 
$\chi_i$ of $H_{(i-1)}$ such that
$$ \sigma_i = \tau^{\ell}_{\chi_i} = \tau^r_{\chi_i} $$
is a left and right winding automorphism of $H_{(i-1)}$. 
In particular for each $j$ with $j < i$ there exists 
$a_{ji} \in H_{(j-1)}$ such that
$$ \sigma_i(X_j) = X_j + a_{ji}. $$
\end{enumerate}
\end{proposition}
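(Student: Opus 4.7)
The plan is to prove (1)--(4) by induction on $n$, with the conclusions at step $i$ feeding both each other and the passage to step $i+1$.

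For (1), the base case $H_{(0)}=F$ is trivial. In the inductive step $H_{(i)}=H_{(i-1)}[X_i;\sigma_i,\delta_i]$ is noetherian by the Hilbert basis theorem for Ore extensions, and $H_{(i)}\otimes H_{(i)}$ is obtained from $H_{(i-1)}\otimes H_{(i-1)}$ by two commuting Ore extensions---first by $X_i\otimes 1$ using $(\sigma_i\otimes\id,\delta_i\otimes\id)$ and then by $1\otimes X_i$ using $(\id\otimes\sigma_i,\id\otimes\delta_i)$---so $H_{(i)}\otimes H_{(i)}$ is noetherian and a domain by the inductive hypothesis and the fact that Ore extensions of domains are domains.

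For (2) I would apply Huang's theorem \cite[Theorem 1.3]{Hua}, i.e.\ (1d) above, to the inclusion $H_{(i-1)}\subset H_{(i)}$; its hypotheses are in place by (1). After a change of variable within $H_{(i)}$ one obtains $\Delta(X_i)=a\otimes X_i+X_i\otimes 1+w$ with $a\in H_{(i-1)}$ and $w\in H_{(i-1)}\otimes H_{(i-1)}$; a further scalar shift $X_i\leadsto X_i-\epsilon(X_i)$ (which preserves $H_{(i)}$) lets us assume $\epsilon(X_i)=0$. Since $H_{(i)}^{\otimes 3}$ is free over $H_{(i-1)}^{\otimes 3}$ on the PBW basis $\{X_i^k\otimes X_i^l\otimes X_i^m\}$, extracting the coefficient of $1\otimes 1\otimes X_i$ in the coassociativity identity $(\Delta\otimes\id)\Delta(X_i)=(\id\otimes\Delta)\Delta(X_i)$ yields $\Delta(a)=a\otimes a$, while the counit axiom forces $\epsilon(a)=1$. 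Thus $a$ is group-like in $H_{(i-1)}$, which is connected by induction (alternatively by \cite[Proposition 2.5]{BOZZ}), so $a=1$ and (E1.2.1) holds. Claim (3) is then the usual coradical-filtration consequence of (2), exactly as in \cite[Proposition 2.5]{BOZZ}.

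For (4) I would apply $\Delta$ to the Ore relation $X_ih=\sigma_i(h)X_i+\delta_i(h)$ for $h\in H_{(i-1)}$, expand using (E1.2.1), and use the same relation to push every occurrence of $X_i$ on the right-hand side, so that both sides of the resulting identity are written in the PBW basis $\{X_i^k\otimes X_i^l\}$ for $H_{(i)}\otimes H_{(i)}$ as a free module over $H_{(i-1)}\otimes H_{(i-1)}$. Comparing the coefficient of $X_i\otimes 1$ gives $(\sigma_i\otimes\id)\Delta=\Delta\sigma_i$, and comparing the coefficient of $1\otimes X_i$ gives $(\id\otimes\sigma_i)\Delta=\Delta\sigma_i$. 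Setting $\chi_i:=\epsilon\circ\sigma_i$ (an algebra map $H_{(i-1)}\to F$), applying $\epsilon\otimes\id$ to the first identity produces $\sigma_i=\tau^\ell_{\chi_i}$, and applying $\id\otimes\epsilon$ to the second produces $\sigma_i=\tau^r_{\chi_i}$ for the same $\chi_i$. The concluding formula $\sigma_i(X_j)=X_j+a_{ji}$ with $a_{ji}\in H_{(j-1)}$ then drops out by evaluating $\sigma_i=(\chi_i\otimes\id)\Delta$ on (E1.2.1) at index $j<i$, with $a_{ji}:=\chi_i(X_j)+(\chi_i\otimes\id)(w_j)$.

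The main obstacle is the coefficient extraction in (2) and (4): one must systematically push each $X_i$ in expressions like $\Delta(X_i)\Delta(h)$ and $\Delta(\sigma_i(h))\Delta(X_i)$ to the right via the Ore commutation relation to reach the unique PBW normal form, and in (4) one must keep the four sources of an $X_i$-term disjoint in order to read off the winding-automorphism identities cleanly.
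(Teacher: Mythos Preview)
Your proposal is correct and follows essentially the same route as the paper: induction using Ore-extension facts for (1), Huang's theorem for (2), \cite[Proposition 2.5]{BOZZ} for (3), and the winding-automorphism identity for (4). The only minor deviations are that in (2) you re-derive that $a$ is group-like via coassociativity and then invoke connectedness of $H_{(i-1)}$, whereas the paper takes $a$ group-like as part of Huang's theorem and instead uses that an iterated Ore extension of a field has no nontrivial units; and in (4) you reproduce the coefficient-comparison argument directly rather than citing \cite[Theorem 1.2]{BOZZ} and \cite[Theorem 1.3]{Hua}, where exactly that argument lives.
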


\begin{proof} (1) This follows by induction on $n$ using 
\cite[Theorem 1.2.9]{McR}.

\noindent 
(2) Fix $i$, $1 \leq i \leq n$ (where we take 
$H_{(0)} = F$, $\sigma_1 = \mathrm{id}_{F}$ and 
$\delta_1 = 0$). By (1), $H_{(i-1)}$ is noetherian and 
$H_{(i-1)}\otimes H_{(i-1)}$ is a domain. Hence, by 
\cite[Theorem 1.3]{Hua},
$$ \Delta(X_i) = a_i \otimes X_i + X_i \otimes 1 + w_i $$
for a group-like element of $a_i$ of $H_{(i-1)}$ and 
$w_i \in H_{(i-1)} \otimes H_{(i-1)}$. But $H$ has no
nontrivial invertible elements. Therefore, noting that 
$\mu \circ (\epsilon \otimes \mathrm{id})\circ \Delta 
= \mathrm{id}_{H_{(i-1)}},$ $a_i=1$, yielding \eqref{E1.2.1}.

\noindent (3) This is proved in \cite[Proposition 2.5]{BOZZ}.

\noindent (4) That the map $\sigma_i$ is a (left and right) 
winding automorphism (with the same character) of 
$H_{(i-1)}$ is proved in \cite[Theorem 1.2]{BOZZ} and its 
improvement \cite[Theorem 1.3]{Hua}. From \eqref{E1.1.3} 
applied to $X_j$ we see that
\begin{equation}
\label{E1.2.2}\tag{E1.2.2}
a_{ji} = \chi_i(X_j) + 
\sum {w_{j}}_{(1)}\chi_i({w_{j}}_{(2)}) \in H_{(j-1)}.
\end{equation}
\end{proof}

Since it is needed for the proof of Theorem 
\ref{xxthm0.1}, we recall a result in classical invariant 
theory due to E. Noether \cite{No}, also see 
\cite[Theorem 1.1]{Mo2} and \cite[Theorem 2.3.1]{Sm}.

\begin{theorem} 
\label{xxthm1.3}
Let $A$ be an affine commutative algebra over $F$ and
$G$ be a finite subgroup of $\mathrm{Aut}_{F-\mathrm{alg}}(A)$. 
Then the fixed subring $A^G$ is affine over $F$ and $A$ is a 
finitely generated module over $A^G$.
\end{theorem}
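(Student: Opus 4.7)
The plan is to exhibit, for each element of $A$, a monic polynomial it satisfies with coefficients in $A^G$, then leverage the Hilbert basis theorem to pass finite generation between $A$, $A^G$, and a carefully chosen intermediate subalgebra.

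First, for any $a\in A$, I would form the ``orbit polynomial''
\[
p_a(t)\;=\;\prod_{g\in G}\bigl(t-g(a)\bigr)\;\in\;A[t].
\]
Because $G$ permutes the factors, the coefficients of $p_a(t)$ (which are the elementary symmetric functions in the $g(a)$) are fixed by $G$, so $p_a(t)\in A^G[t]$. Since $\mathrm{id}\in G$, $a$ is a root of $p_a$, so every element of $A$ is integral over $A^G$ of degree at most $|G|$.

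Next, write $A=F[a_1,\ldots,a_n]$ and let $B$ be the $F$-subalgebra of $A^G$ generated by the coefficients of $p_{a_1},\ldots,p_{a_n}$. Then $B$ is a finitely generated commutative $F$-algebra, hence noetherian by the Hilbert basis theorem. Each $a_i$ satisfies a monic polynomial over $B$ of degree $|G|$, so $A$ is generated as a $B$-module by the finitely many monomials $a_1^{e_1}\cdots a_n^{e_n}$ with $0\le e_i<|G|$. In particular, $A$ is a finitely generated $B$-module.

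Finally, since $B\subseteq A^G\subseteq A$ with $A$ a finitely generated module over the noetherian ring $B$, the intermediate submodule $A^G$ is itself finitely generated over $B$. Combining a finite $B$-module generating set of $A^G$ with the finite $F$-algebra generating set of $B$ exhibits $A^G$ as an affine $F$-algebra, and then $A$ is a fortiori a finitely generated $A^G$-module. The only genuine subtlety is the last step: one cannot directly conclude that $A^G$ is affine over $F$ merely from $A$ being affine, and it is precisely the passage through the noetherian intermediate ring $B$ that converts ``$A$ integral over $A^G$'' into the desired finite generation of $A^G$.
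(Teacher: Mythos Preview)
Your argument is correct and is precisely the standard Noether/Artin--Tate proof. The paper, however, does not give its own proof of this theorem: it merely recalls it as a classical result of E.~Noether, with references to \cite{No}, \cite[Theorem 1.1]{Mo2}, and \cite[Theorem 2.3.1]{Sm}. So there is no ``paper's proof'' to compare against; what you have written is essentially the proof one finds in those references.
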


\subsection{Positive characteristic aspects}
\label{xxsec1.2} 
Given Proposition \ref{xxpro1.2}, the following 
result is  key to our main results.

\begin{proposition}
\label{xxpro1.4}
Suppose $\Bbbk$ has positive characteristic $p$.
Let $d$ be a positive integer.
\begin{enumerate}
\item[(1)]
Let $R$ be a Hopf $\Bbbk$-algebra such that the order of 
every left or right winding automorphism of $R$ divides 
$d$. Suppose $H:=R[x,\sigma,\delta]$ is an HOE of $R$ such 
that
\begin{equation}
\label{E1.4.1}\tag{E1.4.1}
\Delta(x)=1\otimes x+x\otimes 1+w
\end{equation}
for some $w\in R\otimes R$. Then the order of every 
left or right winding automorphism of $H$ divides $dp$.
\item[(2)]
Let $H$ be an $n$-step IHOE of $\Bbbk$. Then every left 
or right winding automorphism of $H$ has order dividing 
$p^n$.
\end{enumerate}
\end{proposition}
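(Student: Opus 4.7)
The plan is to prove (1) directly by explicit computation with the formula \eqref{E1.4.1} for $\Delta(x)$, and then to deduce (2) by induction on $n$, with the inductive step supplied by (1). The key conceptual point is that on the generator $x$ a winding automorphism acts as $x \mapsto x + (\text{element of } R)$; after that, a telescoping identity combined with the characteristic $p$ hypothesis finishes the argument.

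For part (1), I fix a character $\chi$ of $H$ and set $\phi := \tau^\ell_\chi$. Since $R$ is a Hopf subalgebra of $H$, $\phi$ restricts to $R$ as the left winding automorphism associated to $\chi|_R$, so by hypothesis $\phi^d|_R = \id_R$. Writing $w = \sum w_{(1)} \otimes w_{(2)} \in R \otimes R$ and applying \eqref{E1.4.1} directly gives
$$\phi(x) = \chi(1)x + \chi(x) + \sum \chi(w_{(1)}) w_{(2)} = x + u, \qquad u := \chi(x) + \sum \chi(w_{(1)}) w_{(2)} \in R.$$
Since $\phi$ is an algebra automorphism, an easy induction on $k$ yields $\phi^k(x) = x + \sum_{j=0}^{k-1}\phi^j(u)$; in particular $\phi^d(x) = x + v$ where $v := \sum_{j=0}^{d-1}\phi^j(u) \in R$. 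Iterating once more and using $\phi^d|_R = \id_R$,
$$\phi^{dp}(x) = x + \sum_{k=0}^{p-1}\phi^{dk}(v) = x + pv = x.$$
Combined with $\phi^{dp}|_R = \id_R$ and the fact that $H$ is algebra-generated over $\Bbbk$ by $R$ and $x$, this gives $\phi^{dp} = \id_H$. The right-winding case is identical modulo replacing $u$ by $\chi(x) + \sum w_{(1)}\chi(w_{(2)})$.

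For (2), I induct on $n$. The base case $n=0$ is trivial. For the inductive step, Proposition \ref{xxpro1.2}(2) (after the permitted change of variables) writes $\Delta(X_n) = 1 \otimes X_n + X_n \otimes 1 + w_n$ with $w_n \in H_{(n-1)} \otimes H_{(n-1)}$, so $H = H_{(n-1)}[X_n;\sigma_n,\delta_n]$ satisfies the hypotheses of (1) with $R = H_{(n-1)}$ and $d = p^{n-1}$. The inductive hypothesis gives winding-order divisor $p^{n-1}$ for $R$, so (1) produces divisor $p^{n-1} \cdot p = p^n$ for $H$.

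I do not anticipate a substantial obstacle. The only subtle point is recognising that the characteristic $p$ hypothesis is used precisely to collapse the telescoping sum $\sum_{k=0}^{p-1}\phi^{dk}(v) = pv$ to zero. The structural inputs --- that a winding automorphism of $H$ restricts on the Hopf subalgebra $R$ to the winding automorphism of $R$ associated to the restricted character, and that Proposition \ref{xxpro1.2}(2) allows us to put $\Delta(X_n)$ into the form required by \eqref{E1.4.1} --- are what make the induction in (2) go through cleanly.
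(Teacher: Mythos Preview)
Your proof is correct and follows essentially the same approach as the paper's: compute $\phi(x)=x+u$ with $u\in R$ from \eqref{E1.4.1}, observe that $\phi^d(x)=x+v$ for some $v\in R$ fixed by $\phi^d$, and then iterate $p$ times to get $x+pv=x$; part (2) is the same induction via Proposition~\ref{xxpro1.2}(2). Your write-up is a bit more explicit about the telescoping sum, but the idea is identical.
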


\begin{proof} 
We prove the results for left winding automorphisms.

\noindent 
(1) Let $\pi: H\to \Bbbk$ be an algebra map and let $\Xi^l_{\pi}$ 
be the corresponding left winding automorphism  of $H$, so 
$\Xi^l_{\pi}(h) = \pi(h_1)h_2$ for $h \in H$. Since $R$ 
is a Hopf subalgebra of $H$ and $\pi\mid_{R}$ is a character 
of $R$, $\Xi^l_{\pi}$ restricted to $R$, still denoted by 
$\Xi^l_{\pi}$, is a left winding automorphism of $R$. By 
assumption $(\Xi^l_{\pi})^{d}$ is the identity of 
$R$. It remains to show that $(\Xi^l_{\pi})^{dp}$ is the 
identity when applied to $x$. Using \eqref{E1.4.1},
$$\Xi^l_{\pi}(x)=x +\pi(x)+\mu \circ (\pi\otimes 1)(w)=x+s$$
where $s=\pi(x)+\mu \circ (\pi\otimes 1)(w)\in R$. Then 
$$(\Xi^l_{\pi})^{d}(x)=x+t$$
for some $t\in R$. Note that $(\Xi^l_{\pi})^{d}(t)=t$ as 
the order of $\Xi^l_{\pi}$ restricted to $R$ divides $d$. Then
$$(\Xi^l_{\pi})^{dp}(x)=((\Xi^l_{\pi})^{d})^{p}(x)=x+pt=x.$$
Therefore $(\Xi^l_{\pi})^{dp}$ is the identity map on $H$ 
as required.

\noindent 
(2) This follows from part (1), Proposition \ref{xxpro1.2}(2) 
and induction on $n$. 
\end{proof}

Note that there is an anti-monomorphism of groups from the 
character group $X(H)$ of $H$ (that is, the algebra 
homomorphisms from $H$ to $\Bbbk$) to the group of left 
winding automorphisms of $H$, \cite[Section 2.5]{BZ1}. Hence, 
viewing $H$ in dual language, as a quantum group, Proposition 
\ref{xxpro1.4}(2) can be interpreted as giving a bound on the 
exponent of the maximal classical subgroup of $H$, which is a 
unipotent algebraic group.

\medskip

\section{Proof of Theorem \ref{xxthm0.1}}
\label{xxsec2}

\subsection{HOEs and IHOEs in positive characteristic}
\label{xxsec2.1} 
If $C$ is a commutative $\Bbbk$-algebra, where $\Bbbk$ has 
positive characteristic $p$, we denote the Frobenius map 
$c \mapsto c^p$ on $C$ by $F$. In this section, we always 
assume the following. 

\begin{convention}
\label{xxcon2.1} 
When $\Bbbk$ is a field of positive characteristic and $C$
is a commutative $\Bbbk$-algebra, we use $F(C)$ to denote 
the $\Bbbk$-subalgebra of $C$ generated by the image of $C$. 
Thus, in general, $F(C)$ is larger than the image of $C$ under $F$.
\end{convention}

Given an automorphism $\sigma$ and a $\sigma$-derivation 
$\delta$ of an algebra $A$, a subspace $E$ of $A$ is called 
\emph{$(\sigma,\delta)$-trivial} if $\sigma(e) = e$ and 
$\delta(e) = 0$ for all $e \in E$.

\begin{lemma}
\label{xxlem2.2}
Let $B$ be an Ore extension $A[x;\sigma,\delta]$ of an 
affine $\Bbbk$-algebra $A$.
\begin{enumerate}
\item[(1)]
Suppose that no nonzero element of $Z(A)$ is a zero 
divisor on $A$, and that $Z(A)^{\sigma} \subsetneq Z(A)$. 
Then 
$$z x=xz$$
for all $z\in Z(A)^{\sigma}$. As a consequence, $B$ contains 
the commutative subalgebra $Z(A)^{\sigma}[x]$.
\item[(2)]
Suppose $Z(A)=Z(A)^{\sigma}$ and ${\text{char}}\; \Bbbk=p>0$.
Then 
$$ z^p x= x z^p$$
for all $z\in Z(A)$. As a consequence, $B$ contains the 
commutative subalgebra $F(Z(A))[x]$.
\item[(3)]
Suppose that
\begin{enumerate}
\item[(a)] 
$A$ is a prime ring that is a finitely generated module 
over $Z(A)$;
\item[(b)]
$Z(A)$ is an affine $\Bbbk$-algebra;
\item[(c)]
$\sigma\mid_{Z(A)}$ has finite order;
\item[(d)]
${\text{char}}\; \Bbbk=p>0$.
\end{enumerate}
Then $B$ is a prime noetherian algebra satisfying a 
polynomial identity, and $A$ is a finite module over 
a $(\sigma, \delta)$-trivial subalgebra of its center.
\item[(4)]
Suppose, in addition to hypotheses (3)(a)-(3)(d), that
\begin{enumerate}
\item[(e)] $A$ is a maximal order.
\end{enumerate}
Then $B$ is a maximal order and is a finitely generated module 
over $Z(B)$, which is an affine normal $\Bbbk$-algebra.
\end{enumerate}
\end{lemma}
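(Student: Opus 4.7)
The strategy is to reduce (1) and (2) to direct computations with the $\sigma$-derivation applied to central commutation relations, then bootstrap these into (3) via a case analysis on whether $\sigma$ acts non-trivially on $Z(A)$, and finally invoke known results on Ore extensions of maximal orders for (4).

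For (1) it suffices to show $\delta(z) = 0$ for every $z \in Z(A)^\sigma$, since then $xz = \sigma(z)x + \delta(z) = zx$. Applying $\delta$ to the relation $az = za$, which holds because $z$ is central, and using $\sigma(z) = z$ together with the centrality of $z$, one obtains $\sigma(a)\delta(z) = \delta(z)\,a$ for every $a \in A$. Restricting to $a \in Z(A)$ yields $(\sigma(a) - a)\delta(z) = 0$; under the strict containment $Z(A)^\sigma \subsetneq Z(A)$ one can choose $a$ so that $\sigma(a) - a$ is a nonzero central element, and the non-zero-divisor hypothesis forces $\delta(z) = 0$. The same identity $\sigma(a)\delta(z) = \delta(z)a$ remains valid whenever $z \in Z(A)^\sigma$; specializing $a = z$ gives $z\delta(z) = \delta(z)z$, and then, in characteristic $p$ with $Z(A) = Z(A)^\sigma$, the Leibniz expansion
\[
\delta(z^p) \;=\; \sum_{i=0}^{p-1} \sigma(z)^i\,\delta(z)\,z^{p-1-i} \;=\; p\,z^{p-1}\delta(z) \;=\; 0
\]
yields (2).

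For (3) I would split on whether $\sigma|_{Z(A)}$ is the identity. If $Z(A)^\sigma \subsetneq Z(A)$, part (1) guarantees that $C := Z(A)^\sigma$ is $(\sigma,\delta)$-trivial; the finite-order and affineness hypotheses on $\sigma|_{Z(A)}$ and $Z(A)$ together with Theorem \ref{xxthm1.3} make $C$ affine and $Z(A)$ finite over $C$, hence $A$ finite over $C$. If instead $\sigma|_{Z(A)} = \mathrm{id}$, part (2) shows $C := F(Z(A))$ is $(\sigma,\delta)$-trivial; it is affine (generated by the $p$-th powers of a generating set of $Z(A)$), and $Z(A)$ is spanned over it by the finitely many monomials of multidegree less than $p$ in those generators, so again $A$ is finite over $C$. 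In either case, $C$ being $(\sigma,\delta)$-trivial means $xc = cx$ for $c \in C$, so $C[x] \subseteq B$ is an ordinary polynomial subring; since $C \subseteq Z(A)$, any $C$-module generators $a_1,\ldots,a_m$ of $A$ also generate $B$ as a right $C[x]$-module (because $a_j \cdot C[x] = \sum_i C a_j x^i$ and summing over $j$ recovers $B = \bigoplus_i A x^i$). Hence $B$ is module-finite over the commutative noetherian subring $C[x]$ and therefore satisfies a polynomial identity. Noetherianity of $B$ is the Hilbert basis theorem for Ore extensions, and primeness follows from the standard fact that $A[x;\sigma,\delta]$ is prime whenever $A$ is prime and $\sigma$ is an automorphism.

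For (4), the additional hypothesis that $A$ is a maximal order lets me invoke the theorem of Chamarie (and its later reformulations) that an Ore extension of a prime noetherian maximal order by an automorphism is again a maximal order; so $B$ is a prime noetherian PI maximal order containing the affine central subring $C[x]$ over which it is module-finite. The center $Z(B)$ is a $C[x]$-submodule of the finitely generated module $B$ over the noetherian ring $C[x]$, hence is itself finite over $C[x]$ and therefore affine; $B$ is in turn module-finite over $Z(B)$; and the normality of $Z(B)$ is the standard fact that the center of a prime noetherian maximal order is integrally closed in its fraction field. I expect the main obstacle to be the case split in (3): producing a $(\sigma,\delta)$-trivial central affine subalgebra of $A$ over which $A$ is finite depends essentially on positive characteristic in the second case, since there the Frobenius is the only available mechanism for forcing $\delta$ to vanish on a full-rank central subring. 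Once this subalgebra is in hand, everything downstream (PI, maximal order, finiteness over the center, normality) is structural.
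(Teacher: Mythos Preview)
Your arguments for parts (1), (2) and (3) are correct and essentially coincide with the paper's proof: the same reduction to $\delta(z)=0$ via applying $\delta$ to a central commutation relation, the same Frobenius trick in characteristic $p$, and the same dichotomy on whether $\sigma|_{Z(A)}$ is trivial, followed by module-finiteness over the commutative subring $C[x]$ to get the PI property.

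However, part (4) contains a genuine error. You call $C[x]$ an ``affine central subring'' of $B$ and then argue that $Z(B)$ is a $C[x]$-submodule of $B$, hence finitely generated over $C[x]$. But $C[x]$ is \emph{not} central in $B$: while $C\subseteq Z(B)$, the element $x$ is not central unless $\sigma=\mathrm{id}$ and $\delta=0$ on all of $A$. Consequently $Z(B)$ is not stable under multiplication by $x$, and your finiteness argument for $Z(B)$ over $C[x]$ collapses; likewise, ``$B$ is in turn module-finite over $Z(B)$'' does not follow, since $C[x]\not\subseteq Z(B)$.

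The paper's route avoids this by appealing to the trace ring. Once $B$ is known to be a prime noetherian affine PI maximal order (the maximal order step is indeed Chamarie's theorem, as you cite), one invokes the fact that such a ring coincides with its own trace ring, and that the trace ring of a prime affine PI algebra is a finite module over its center, which is affine and normal \cite[Propositions 13.9.8(i), 13.9.11(ii)]{McR}. That is the missing ingredient: finiteness of $B$ over $Z(B)$ is a nontrivial consequence of the maximal order hypothesis via trace ring theory, not a formal consequence of module-finiteness over the (non-central) subring $C[x]$.
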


\begin{proof} (1) Let $z \in Z(A)^{\sigma}$. Then 
$xz = zx + \delta(z)$, so we only need to show that $\delta(z)=0$ 
for all $z\in Z(A)^{\sigma}$. Pick any element $y\in Z(A)
\setminus Z(A)^{\sigma}$. Then $zy=yz$. Applying $\delta$ to 
this equation we have
$$\delta(z) y+\sigma(z) \delta (y)
=\delta(y) z +\sigma(y)\delta(z).$$
Since $z\in Z(A)^\sigma$, we have 
$\sigma(z) \delta (y)= z \delta(y)=\delta(y) z,$ 
which implies that
$$\delta(z) y=\sigma(y)\delta(z).$$
Hence, since $\sigma(y) \in Z(A)$,
$$\delta(z) (y-\sigma(y))=0.$$
But $y-\sigma(y) \in Z(A) \setminus \{0\}$ and is thus, 
by hypothesis, not a zero divisor in $A$. So 
$\delta(z)=0$ as required,  and hence $Z(A)^{\sigma}[x]$ 
is commutative.

\noindent 
(2) Suppose that $Z(A)=Z(A)^{\sigma}$. Then, for every 
$z\in Z(A)$,
$$x z= \sigma(z) x+\delta(z)=z x+ \delta(z).$$
By induction on $n$ and since $z \in Z(A)$, we have, 
for all $n \geq 1$,
$$xz^n =z^n x+n z^{n-1} \delta(z).$$
The assertion follows because $pz^{p-1}\delta(z)=0$. Once 
again, the consequence is clear.

\noindent 
(3) It is clear from \cite[Theorem 1.2.9]{McR} and 
hypotheses {\it (a)} and {\it (b)} that $B$ is noetherian, and it is 
prime by \cite[Theorem 1.2.9(iii)]{McR}. By Theorem 
\ref{xxthm1.3}, and hypotheses {\it (a)} and {\it (c)}, $A$ is a finitely 
generated $Z(A)^{\sigma}$-module. When 
$Z(A)\neq Z(A)^{\sigma}$,  $B$ is a finitely generated 
left module over its commutative affine subalgebra 
$Z(A)^{\sigma}[x]$ by part (1). When $Z(A)=Z(A)^{\sigma}$, 
$B$ is a finitely generated left module over its 
commutative affine subalgebra $F(Z(A))[x]$ by part (2). 
In both cases, $B$ thus satisfies a polynomial identity, 
by \cite[Corollary 13.4.9(i)]{McR}. The second claim in 
the final sentence of (3) is clear from (1), (2) and 
Theorem \ref{xxthm1.3}.

\noindent 
(4) Suppose that $A$ satisfies hypothesis {\it (e)}. Then $B$ 
is also a maximal order by \cite[Proposition V.2.5]{MaR}. 
Therefore, since it is a prime noetherian PI ring by 
part (3), $B$ equals its own trace ring and is thus  
a finite module over its affine normal center by 
\cite[Propositions 13.9.8(i) and 13.9.11(ii)]{McR}.
\end{proof}

Theorem \ref{xxthm0.1} is a corollary of the following 
more general result applying to iterated Ore extensions 
in positive characteristic.

\begin{theorem}
\label{xxthm2.3}
Suppose $\Bbbk$ has positive characteristic $p$, let 
$n$ be a non-negative integer, and let $R$ be an 
$n$-step iterated Ore extension,
\begin{equation} 
\label{E2.3.1}\tag{E2.3.1} 
R = \Bbbk[X_1][X_2; \sigma_2 , \delta_2] \dots 
[X_n; \sigma_n , \delta_n].
\end{equation}
For $i = 1, \ldots , n-1$, denote 
$\Bbbk \langle X_1, \ldots , X_i \rangle$ by $R_{(i)}$.
\begin{enumerate}
\item[(1)] 
$R$ satisfies a polynomial identity if and only if 
${\sigma_{i}}\mid_{Z(R_{(i-1)})}$ has finite order 
for all $i = 2, \ldots , n$.
\item[(2)] 
Suppose that $R$ satisfies a polynomial identity. 
Then it is a finitely generated module over its 
center, which is a normal $\Bbbk$-affine domain. 
Moreover $R$ is a homologically homogeneous and 
GK-Cohen Macaulay domain, with
$$\gldim R = \GKdim  R = n. $$
\end{enumerate} 
\end{theorem}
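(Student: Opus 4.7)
The plan is to prove both parts together by a single induction on $n$, maintaining the inductive hypothesis that $R_{(i)}$ is a prime noetherian PI algebra, a maximal order, and a finitely generated module over its affine normal centre $Z(R_{(i)})$.

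For the sufficiency in (1) and the whole of (2), the base case $i=1$ is immediate since $R_{(1)} = \Bbbk[X_1]$ is a commutative polynomial ring. For the inductive step I would apply Lemma \ref{xxlem2.2}(3)--(4) to $R_{(i)} = R_{(i-1)}[X_i;\sigma_i,\delta_i]$: the inductive hypothesis provides hypotheses (a), (b), (e), the positive-characteristic assumption supplies (d), and the standing finite-order assumption on $\sigma_i|_{Z(R_{(i-1)})}$ supplies (c); hence $R_{(i)}$ is a prime noetherian PI maximal order, finite over its affine normal centre. Taking $i = n$ delivers the first two sentences of (2). For the homological conclusions, the standard additivity formulas $\gldim R_{(i)} = \gldim R_{(i-1)} + 1$ and $\GKdim R_{(i)} = \GKdim R_{(i-1)} + 1$ for Ore extensions by an automorphism (see \cite[Theorems 7.5.3 and 8.2.14]{McR}) give $\gldim R = \GKdim R = n$. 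Finally, a prime affine PI algebra which is a finite module over an affine normal commutative centre and has finite global dimension equal to the Krull dimension of that centre is homologically homogeneous and GK-Cohen-Macaulay, by the standard trace-ring analysis of \cite[\S13.9]{McR}.

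For the necessity in (1) I would again induct on $n$. If $R$ satisfies a PI, then so does every subring $R_{(i)}$; by the inductive hypothesis, $R_{(i-1)}$ is then a prime PI algebra finite over its affine normal centre, with $\sigma_j|_{Z(R_{(j-1)})}$ of finite order for all $j<i$. The remaining assertion that $\sigma_i|_{Z(R_{(i-1)})}$ also has finite order is the content of the PI-theoretic analysis of Ore extensions developed by Leroy--Matczuk \cite{LeM} and Chuang--Lee \cite{CL1, CL2}, which ultimately rests on Kharchenko's theorem \cite{Kh} on $(\sigma,\delta)$-invariants of prime rings: an Ore extension of a prime PI ring by an automorphism whose restriction to the centre has infinite order cannot itself satisfy a polynomial identity. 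This closes the induction.

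The principal obstacle is this final necessity step. The sufficiency direction and all of part (2) amount essentially to a careful book-keeping exercise built on Lemma \ref{xxlem2.2} together with well-established Ore-extension homological formulas; but passing from the PI property of the top ring down to finiteness of order of the restricted defining automorphisms is a delicate statement about PI rings, and it is exactly here that the external input from \cite{LeM, CL1, CL2, Kh} is indispensable.
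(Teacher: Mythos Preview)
Your induction scheme for sufficiency in (1) and for the first two sentences of (2) matches the paper's approach via Lemma \ref{xxlem2.2}(3)(4), and your handling of necessity is correct (the paper simply quotes \cite[Theorem 2.7]{LeM}).

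There are, however, two genuine gaps in the homological part.

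First, there is no ``standard additivity formula'' $\gldim R_{(i)} = \gldim R_{(i-1)} + 1$ for a general Ore extension $R_{(i-1)}[X_i;\sigma_i,\delta_i]$. The result you cite, \cite[Theorem 7.5.3]{McR}, only gives $\gldim R_{(i)} \in \{\gldim R_{(i-1)},\,\gldim R_{(i-1)}+1\}$; ruling out the lower value requires work. The paper does this by producing a simple $R$-module $W$ with $\projdim_A W = n-1$ (where $A = R_{(n-1)}$), which needs Kaplansky's theorem for affine PI algebras to ensure $\dim_\Bbbk W < \infty$, so that $W$ is a finitely generated $A$-module containing a chosen simple $A$-module of maximal projective dimension. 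Then \cite[Corollary 7.9.18]{McR} forces $\gldim R = n$. This is precisely where the PI hypothesis enters the global-dimension computation, and your proposal bypasses it.

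Second, homological homogeneity does not follow from being prime affine PI, finite over a normal centre, with finite global dimension; the trace-ring machinery of \cite[\S 13.9]{McR} does not deliver this. The paper instead invokes Zhong's theorem \cite[Theorem 2.3]{Zho}, which says an Ore extension of a homologically homogeneous ring is again homologically homogeneous \emph{provided} the base is a finite module over a $(\sigma,\delta)$-trivial central subalgebra. That proviso is exactly the final clause of Lemma \ref{xxlem2.2}(3), and is the reason that clause is there. The GK-Cohen-Macaulay conclusion is then obtained from homological homogeneity via \cite[Corollary 5.4 and Theorem 4.8]{BM}, not from trace-ring considerations.
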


\begin{proof} 
(1) Suppose that $R$ satisfies a PI. Then so also 
does $R_{(i)} = R_{(i-1)}[X_i ; \sigma_i, \delta_i]$ 
for $i =2, \ldots , n$. For each such $i$, since 
$R_{(i-1)}$ is a domain satisfying a PI, 
\cite[Theorem 2.7]{LeM} then requires that 
$\left| {\sigma_{i}}\mid_{ Z(R_{(i-1)})}\right| < \infty$, 
as claimed.

Conversely, assume that each ${\sigma_{i}}\mid_{Z(R_{(i-1)})}$ 
has finite order. By induction on $n$ we may assume that 
$A := R_{(n-1)}$ satisfies a PI, and that part (2) of the 
theorem has been proved for $A$. In particular, $A$ is a 
finitely generated $Z(A)$-module. Since $A$ is an iterated 
Ore extension of a field, it is a domain. By our induction 
hypothesis and since ${\sigma_{n}}\mid_{Z(A)}$ has finite 
order, hypotheses (3){\it (a)-(d)} of Lemma \ref{xxlem2.2} are 
satisfied. By \cite[Proposition V.2.5]{MaR}, $A$ is a 
maximal order, so {\it (e)} of Lemma \ref{xxlem2.2}(4) holds. 
Therefore, by Lemma \ref{xxlem2.2}(4), $R$ is a finitely 
generated $Z(R)$-module and $Z(R)$ is an affine normal domain. 

\noindent
(2) The statements in the second sentence have already been 
proved in (1). The properties listed in the last sentence 
are also proved by induction on $n$. By the induction 
hypothesis, $\gldim A = n-1$, so $\gldim R$ is $n -1$ or $n$ 
by \cite[Theorem 7.5.3(i)]{McR}. Thanks to 
\cite[Corollary 7.1.14]{McR}, there is a simple (left) 
$A$-module $V$ with $\projdim V = n-1$. Choose a maximal 
left ideal $I$ of $A$ with $V \cong A/I$. Let $W$ be a simple 
$R$-module which is a factor of the nonzero cyclic $R$-module 
$R/RI \cong R \otimes_A V$, so $W \cong R/L$ for a left ideal 
$L$ of $R$ containing $RI$. Since $R$ is a noetherian affine 
$\Bbbk$-algebra satisfying a polynomial identity, by part (1),  
$\dim_{\Bbbk} W < \infty$ by Kaplansky's theorem, 
\cite[Theorem 13.10.3(i)]{McR}. In particular, $W$ is a 
finitely generated $A$-module, and $L \cap A = I$ by maximality 
of $I$. Thus $V$ is an $A$-submodule of $W$. Hence, choosing 
an $A$-module $X$ such that $\mathrm{Ext}^{n-1}_A(V,X) \neq 0$, 
the long exact sequence of $\Ext$ yields
$$\Ext^{n-1}_A(W,X)\longrightarrow\Ext^{n-1}_A(V,X)\longrightarrow 0, $$
so that $\projdim_A W = n-1$. Therefore $\gldim R = n$ by 
\cite[Corollary 7.9.18]{McR}. 

Since $A$ is homologically homogeneous, so is $R$, by 
\cite[Theorem 2.3]{Zho}, noting that the hypotheses of Zhong's 
theorem are satisfied in view of Lemma \ref{xxlem2.2}(1),(2), 
which guarantee that $A$ is a finite module over a central 
subalgebra which is $(\sigma_n, \delta_n)$-trivial. Finally 
$R$, being a homologically homogeneous algebra which is a 
finite module over an affine central domain, is 
GK-Cohen Macaulay by \cite[Corollary 5.4 and Theorem 4.8]{BM}.
\end{proof}

Theorem \ref{xxthm0.1} is now an immediate consequence of 
the above theorem combined with Proposition \ref{xxpro1.4}(2):

\begin{corollary}
\label{xxcor2.4}
Suppose $\Bbbk$ has positive characteristic $p$ and let 
$n$ be a non-negative integer. Then every $n$-step IHOE 
$H$ of $\Bbbk$ is a finitely generated module over its 
center $Z(H)$ and $Z(H)$ is a normal $\Bbbk$-affine domain. 
Moreover $H$ is a homologically homogeneous and GK-Cohen 
Macaulay algebra with
$$\gldim H = \GKdim  H = n. $$
\end{corollary}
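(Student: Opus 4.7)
The plan is to deduce the corollary by verifying the hypotheses of Theorem \ref{xxthm2.3} for an arbitrary $n$-step IHOE $H = \Bbbk[X_1][X_2;\sigma_2,\delta_2]\cdots[X_n;\sigma_n,\delta_n]$ over $\Bbbk$, and then reading off the conclusions.

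First I would recall that by Proposition \ref{xxpro1.2}(4) each $\sigma_i$ is a winding automorphism of the Hopf subalgebra $H_{(i-1)}$. Since $H_{(i-1)}$ is itself an $(i-1)$-step IHOE of $\Bbbk$, Proposition \ref{xxpro1.4}(2) applies to give that every left (or right) winding automorphism of $H_{(i-1)}$ has order dividing $p^{i-1}$. In particular, $\sigma_i$ has finite order as an automorphism of the whole algebra $H_{(i-1)}$, and hence a fortiori its restriction $\sigma_i\mid_{Z(H_{(i-1)})}$ has finite order. This verifies the hypothesis of Theorem \ref{xxthm2.3}(1) for every $i = 2,\ldots,n$.

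Applying Theorem \ref{xxthm2.3}(1) then tells us that $H$ satisfies a polynomial identity, so the conclusions of Theorem \ref{xxthm2.3}(2) are available: $H$ is a finitely generated module over its center $Z(H)$, $Z(H)$ is an affine normal $\Bbbk$-domain (note that $H$ is a domain as an iterated Ore extension of a field, so $Z(H)$ is a domain), and $H$ is homologically homogeneous and GK-Cohen Macaulay with $\gldim H = \GKdim H = n$.

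There is essentially no obstacle beyond making the correct pairing of ingredients: the only small point requiring care is that Proposition \ref{xxpro1.4}(2) must be applied not to $H$ itself but to each intermediate subalgebra $H_{(i-1)}$, which is legitimate because the IHOE structure on $H$ restricts to an IHOE structure on each $H_{(i-1)}$ by Definition \ref{xxdef1.1}(2). Once this is noted, the rest is purely a matter of invoking the two earlier results.
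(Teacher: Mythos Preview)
Your proposal is correct and follows exactly the approach the paper indicates: the paper simply states that the corollary is an immediate consequence of Theorem \ref{xxthm2.3} combined with Proposition \ref{xxpro1.4}(2), and you have spelled out precisely how these fit together, including the observation that Proposition \ref{xxpro1.4}(2) is applied to each intermediate IHOE $H_{(i-1)}$ rather than to $H$ itself.
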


\subsection{First questions around Theorem \ref{xxthm0.1}}
\label{xxsec2.2} 
We briefly consider two possible improvements of Theorem 
\ref{xxthm0.1}.

\begin{remarks}
\label{xxrem2.5} 
(1) As a large family of examples to which Corollary 
\ref{xxcor2.4} applies, let $\mathfrak{g}$ be a finite 
dimensional completely solvable Lie algebra over a field 
$\Bbbk$ of positive characteristic $p$. (That is, 
$\mathfrak{g}$ has a full flag of ideals.) Thus the 
enveloping algebra $U(\mathfrak{g})$ is an IHOE, so 
Corollary \ref{xxcor2.4} tells us that $U(\mathfrak{g})$ 
is a finite module over its center. This is a special case 
of a fundamental result due to Jacobson \cite{Ja1}:

\begin{enumerate}
\item[]
\emph{Let $\mathfrak{g}$ be a finite dimensional Lie algebra 
over $\Bbbk$. Then $U(\mathfrak{g})$ contains a central Hopf 
subalgebra over which $U(\mathfrak{g})$ is a free module of 
finite rank equal to a power of $p$.}
\end{enumerate}

\noindent
Note that Jacobson's theorem does not require $\mathfrak{g}$ 
to be restricted. In language not then available to Jacobson, 
his result implies that the PI-degree of $U(\mathfrak{g})$ is 
a power of $p$, prefiguring Theorem \ref{xxthm3.3} below. 
Moreover, the Lie algebra case thus suggests a possible 
strengthened version of Corollary \ref{xxcor2.4}: 

\begin{question}
\label{xxque2.6} 
Is every IHOE over a field of positive characteristic a 
finite module over a central Hopf subalgebra?
\end{question}

(2) Since the enveloping algebra of a finite dimensional Lie 
algebra $\mathfrak{g}$ is not an IHOE when $\mathfrak{g}$ 
contains a non-abelian (classical) simple subalgebra\footnote{
See \cite[Example 3.1(iv)]{BOZZ}, where the 
characteristic 0 hypothesis is not necessary.} 
other than $\mathfrak{sl}(2, \Bbbk )$, one might wonder, in 
the light of Jacobson's theorem discussed in (1), whether 
\emph{every} connected affine Hopf algebra of finite 
Gel'fand-Kirillov dimension is finite over 
its center, or even finite over a central Hopf subalgebra, 
when $\Bbbk$ has positive characteristic.  As a first test 
case for this, consider an affine connected graded Hopf 
algebra $H$, meaning that 
\begin{equation}
\label{E2.6.1}\tag{E2.6.1}
H = \bigoplus_{i \geq 0} H_i
\end{equation}
is connected graded as both an algebra \emph{and} as a 
coalgebra. Two recent papers, \cite{BGZ} and \cite{ZSL}, 
study such Hopf algebras. (In \cite{BGZ} the possibly 
weaker hypothesis that $H$ is only connected graded 
\emph{as an algebra} is also treated, but we do not 
discuss that here.) Theorem A of \cite{ZSL}
states that if $\Bbbk$ has characteristic 0 
and $H$ is an affine connected graded Hopf $\Bbbk$-algebra, 
then $H$ is an $n$-step IHOE, where $n$ is the 
Gel'fand-Kirillov dimension of $H$. The hypothesis on 
$\Bbbk$ is necessary here - consider 
$H = \Bbbk [ x]/ \langle x^p \rangle$ where $\Bbbk$ has 
characteristic $p$. Nevertheless, Corollary \ref{xxcor2.4} 
and \cite[Theorem A]{ZSL} prompt the 

\begin{question}
\label{xxque2.7}
Suppose $H$ is an affine connected graded Hopf $\Bbbk$-algebra 
of finite Gel'fand-Kirillov dimension, where $\Bbbk$ has 
positive characteristic. Is $Z(H)$ affine, and is $H$ a 
finite $Z(H)$-module? 
\end{question}
\end{remarks}

\medskip

\section{The PI degree and other invariants}
\label{xxsec3}

\subsection{PI-degree}
\label{xxsec3.1} 
Theorem \ref{xxthm3.3}, the main result of this subsection, gives 
information on the PI-degree of an $n$-step IHOE in characteristic 
$p$. Recall that the \emph{PI-degree} of a prime PI ring $R$ is 
denoted by $\PIdeg R$ and defined to be $d$, where $d^2$ 
is the dimension over the field $Z(Q(R))$ of the central 
simple quotient ring $Q(R)$, see \cite[13.3.6, 13.6.7]{McR} 
and \cite[p.115]{BG}. When $R$ is in addition an affine 
$\Bbbk$-algebra with the field $\Bbbk$ algebraically closed, 
$\PIdeg R$ equals the maximum dimension over $\Bbbk$ of the 
simple $R$-modules \cite[Theorem 13.10.3]{McR}, 
\cite[Theorem 1.13.5(2)]{BG}. To prove Theorem 
\ref{xxthm3.3} we need the following definitions and 
theorem, adapted from \cite{CL1}. Note that the definitions 
in \cite{CL1} are expressed in terms of the left Martindale 
quotient ring of a ring $R$, but when $R$ is a prime PI ring, 
as here, this coincides with the total quotient ring 
$Q(R) = R[Z(R)\setminus \{0\}]^{-1}$, as follows easily 
from the definition of the Martindale quotient ring and 
Posner's theorem, \cite[Sect. 10.3.5 and Theorem 13.6.5]{McR}. 

\begin{definition} \cite{CL1}
\label{xxdef3.1} 
Let $R$ be a prime algebra satisfying a polynomial 
identity, $\sigma \in \mathrm{Aut}_{\Bbbk-\mathrm{alg}}(R)$ 
and $\delta$ a $\sigma$-derivation of $R$. Let $Q$ be the 
total quotient ring of $R$.
\begin{enumerate}
\item[(1)] 
 $\sigma$ is called {\it X-inner} if its extension to $Q$
is inner, that is, there exists a unit 
$b$ of $Q$ such that 
$$\sigma(x)=b^{-1}xb$$
for all $x\in Q$. Otherwise, $\sigma$ is called {\it X-outer}.
\item[(2)]
$\sigma$ is called {\it quasi-inner} if there exists an
integer $n\geq 1$ such that $\sigma^n$ is X-inner. The least 
such integer $n$ is called the {\it outer degree} of $\sigma$ 
and is denoted by ${\mathrm{Outdeg}}\; \sigma$.
\item[(3)] 
$\delta$ is called {\it X-inner} if its extension to $Q$
is inner, that is, there exists $b\in Q$ such that 
$$\delta(x)=bx-\sigma(x) b$$
for all $x\in Q$. Otherwise, $\delta$ is called {\it X-outer}.
\item[(4)] 
$\delta$ is called {\it quasi-algebraic} if there exist a 
positive integer $n$, an automorphism $g$ of $Q$ and 
$b_1, \cdots, b_{n-1}, b\in Q$ such that for all $x\in R$,
$$\delta^n (x) + b_1 \delta^{n-1}(x) +\cdots 
+ b_{n-1} \delta(x)=bx-g(x) b.$$
The least such integer $n$ is called the 
{\it quasi-algebraic degree} or the {\it outer degree} of 
the $\sigma$-derivation $\delta$ and is 
denoted by ${\mathrm{Outdeg}}\; \delta$. In particular, 
${\mathrm{Outdeg}}\; \delta= 1$ if and only if $\delta$ 
is X-inner.
\end{enumerate}
\end{definition}

The main result of Chuang-Lee \cite{CL1} is the following.

\begin{theorem}\cite[Theorem 2.5]{CL1}
\label{xxthm3.2} 
Let $R$ be a prime PI algebra, $\sigma \in 
\mathrm{Aut}_{\Bbbk - \mathrm{alg}}(R)$ and $\delta$ a 
$\sigma$-derivation of $R$. Then $R[x;\sigma, \delta]$ is a 
PI ring if and only if $\delta$ is quasi-algebraic and 
$\sigma$ is quasi-inner. In this case,
\begin{enumerate}
\item[(a)] 
if $\delta$ is X-outer, then 
$\PIdeg  R[x;\sigma, \delta] = \PIdeg  R \times 
\mathrm{Outdeg}\;\delta$;
\item[(b)] if $\delta$ is X-inner, then 
$\PIdeg  R[x; \sigma, \delta] =\PIdeg  R \times 
\mathrm{Outdeg}\;\sigma$.
\end{enumerate}
\end{theorem}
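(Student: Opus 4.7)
The plan is to localise to the total quotient ring $Q := Q(R) = R[Z(R)\setminus\{0\}]^{-1}$, which by Posner's theorem is a central simple $K$-algebra of dimension $(\PIdeg R)^2$, with $K := Z(Q)$. Both $\sigma$ and $\delta$ extend canonically to $Q$, and $R[x;\sigma,\delta]$ sits as a prime order inside $Q[x;\sigma,\delta]$; consequently the two algebras satisfy a polynomial identity simultaneously and share the same PI-degree. All subsequent work will therefore be carried out inside $Q[x;\sigma,\delta]$, where one has access to units and to the fact that the PI-degree is computed over a field.

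For sufficiency together with the PI-degree formulas, I would first settle case (b). When $\delta$ is X-inner, write $\delta(q) = bq - \sigma(q)b$ for some $b \in Q$; the substitution $y = x - b$ gives $yq = \sigma(q)y$, so $Q[x;\sigma,\delta] = Q[y;\sigma]$ as a pure skew polynomial ring. If the outer degree of $\sigma$ is $m$, with $\sigma^m(q) = u^{-1} q u$ for a unit $u \in Q$, then $z := y^m u$ commutes with every $q \in Q$, and a standard analysis exhibits $Q[y;\sigma]$ as a crossed product of degree $m$ over a central Laurent extension of $K$, yielding $\PIdeg Q[y;\sigma] = m \cdot \PIdeg Q$. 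For case (a), with $\delta$ X-outer but quasi-algebraic of outer degree $n$, I would use the defining relation
$\delta^n(q) + b_1 \delta^{n-1}(q) + \cdots + b_{n-1}\delta(q) = bq - g(q) b$
to manufacture a monic polynomial $P(x)$ of $x$-degree $n$ in $Q[x;\sigma,\delta]$ satisfying $P(x) q = g(q) P(x)$ for all $q \in Q$. Passing to a suitable central power of $P(x)$ realises $Q[x;\sigma,\delta]$ as a finite module over a central polynomial subring, and the rank calculation forces $\PIdeg = n \cdot \PIdeg R$.

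The main obstacle, and the heart of Chuang and Lee's argument, is the necessity direction: the PI property must force $\sigma$ to be quasi-inner and $\delta$ quasi-algebraic. This is where Kharchenko's theory of generalised polynomial identities is indispensable. Kharchenko's fundamental result asserts that if $\sigma$ acts X-outerly on $Q$, then distinct monomials in $x$ with $Q$-coefficients are linearly independent in an extremely strong sense, strong enough to preclude any nontrivial polynomial identity; an analogous but subtler statement handles an X-outer, non-quasi-algebraic $\delta$. Therefore any PI satisfied by $Q[x;\sigma,\delta]$, after specialising coefficients, would produce a GPI on $Q$ contradicting Kharchenko's linear independence unless a power $\sigma^n$ eventually becomes X-inner and $\delta$ satisfies a quasi-algebraic relation. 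The delicate point is disentangling the simultaneous actions of $\sigma$ and $\delta$, which I would handle by filtering $Q[x;\sigma,\delta]$ by $x$-degree: the associated graded ring is $Q[\bar x;\sigma]$, with $\delta$ killed off, so Kharchenko first yields the quasi-inner conclusion for $\sigma$; a follow-up filtration argument lifting back to $Q[x;\sigma,\delta]$ then recovers the quasi-algebraic relation for $\delta$.
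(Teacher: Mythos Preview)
The paper does not prove this statement at all: Theorem~\ref{xxthm3.2} is simply quoted from Chuang and Lee \cite[Theorem 2.5]{CL1} and is used as a black box in the proof of Theorem~\ref{xxthm3.3}. There is therefore no ``paper's own proof'' to compare your proposal against.

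That said, your sketch is a reasonable outline of the Chuang--Lee argument, and the key ingredients you identify (localisation to $Q(R)$, the change of variable reducing the X-inner $\delta$ case to a pure skew polynomial ring, and the crucial reliance on Kharchenko's theory of differential and automorphism identities for the necessity direction) are indeed the ones used in \cite{CL1}. One point to be careful about: in case (a), when $\delta$ is X-outer, the reduction is more delicate than you indicate. Chuang and Lee first invoke a result of Kharchenko (see \cite[Lemmas 5 and 6]{CL2}) to show that X-outerness of $\delta$ forces $\sigma$ itself to be X-inner, so that after conjugation one may replace the pair $(\sigma,\delta)$ by $(\mathrm{Id}, d)$ for an ordinary X-outer derivation $d$; only then does the quasi-algebraic relation produce a central element of the correct degree. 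Your filtration argument for necessity is plausible in spirit but would need considerable work to make rigorous, since the associated graded passage can lose information about $\delta$ that is needed to recover the quasi-algebraic relation.
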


We can now deduce our second main theorem from 
Corollary \ref{xxcor2.4} and Theorem \ref{xxthm3.2}:

\begin{theorem}
\label{xxthm3.3} 
Suppose that $\Bbbk$ is a field of characteristic 
$p > 0$, and let $H$ be an IHOE over $\Bbbk$. Then 
$\PIdeg H$ is a power of $p$.
\end{theorem}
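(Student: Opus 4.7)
My plan is to induct on $n$, the length of the Ore extension tower. The base case $n=0$ is immediate, since $H=\Bbbk$ has $\PIdeg H=1=p^0$. For the inductive step, I write $H = H_{(n-1)}[X_n;\sigma_n,\delta_n]$. By Corollary \ref{xxcor2.4}, $H$ satisfies a polynomial identity, so Theorem \ref{xxthm3.2} applies to the extension $H_{(n-1)}\subset H$ and yields
$$
\PIdeg H \;=\; \PIdeg H_{(n-1)}\cdot c_n,
$$
where $c_n=\mathrm{Outdeg}\,\sigma_n$ when $\delta_n$ is X-inner and $c_n=\mathrm{Outdeg}\,\delta_n$ when $\delta_n$ is X-outer. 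Since $H_{(n-1)}$ is itself an $(n-1)$-step IHOE, the induction hypothesis makes $\PIdeg H_{(n-1)}$ a power of $p$, so it suffices to prove that $c_n$ is a power of $p$ in each of the two cases.

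The X-inner case is handled by the winding-automorphism machinery already in place. By Proposition \ref{xxpro1.2}(4), $\sigma_n$ is a winding automorphism of $H_{(n-1)}$, and by Proposition \ref{xxpro1.4}(2) every such automorphism has order dividing $p^{n-1}$; hence $\sigma_n^{p^{n-1}}=\mathrm{id}$ is trivially X-inner, forcing $\mathrm{Outdeg}\,\sigma_n\mid p^{n-1}$, a power of $p$.

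The main obstacle is the X-outer case, where I must show $\mathrm{Outdeg}\,\delta_n$ is a power of $p$. My plan is to exhibit an explicit quasi-algebraic relation for $\delta_n$ of $p$-power degree, by combining the Hopf structure with characteristic-$p$ identities. Writing $p^{\ell}$ for the (necessarily $p$-power) order of $\sigma_n$ from Proposition \ref{xxpro1.4}(2), I would iterate the Ore relation $X_n r = \sigma_n(r)X_n + \delta_n(r)$ to expand $X_n^{p^{\ell}}r - rX_n^{p^{\ell}}$ for arbitrary $r\in H_{(n-1)}$. With $\sigma_n^{p^{\ell}}=\mathrm{id}$ and the vanishing of $\binom{p^{\ell}}{k}\pmod p$ for $0<k<p^{\ell}$, the noncommutative binomial identity of Jacobson (cf.\ Section \ref{xxsec7}) collapses the expansion dramatically, leaving $\delta_n^{p^{\ell}}(r)$ as the leading constant-in-$X_n$ term together with lower $\delta_n$-iterates carrying $H_{(n-1)}$-coefficients. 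Rearranging this into the form required by Definition \ref{xxdef3.1}(4), with $g=\sigma_n^{p^{\ell}}=\mathrm{id}$, should yield a quasi-algebraic relation for $\delta_n$ of degree $p^{\ell}$, giving $\mathrm{Outdeg}\,\delta_n\mid p^{\ell}$ and closing the induction.

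The most delicate step, which I anticipate as the principal technical hurdle, is making this final rearrangement genuinely rigorous: the natural witness $b=X_n^{p^{\ell}}$ extracted from the commutation identity lies in $H$ rather than in $Q(H_{(n-1)})$, so a further reduction is needed to produce an inner relation over $Q(H_{(n-1)})$ alone. This is precisely where the skew-primitive expansion of $X_n$ afforded by Proposition \ref{xxpro1.2}(2)—and the Hopf-algebra nature of $H$ beyond a mere iterated Ore extension—should enter, presumably by constraining $X_n^{p^{\ell}}$ enough relative to $H_{(n-1)}$ for the commutator identity to descend to a quasi-algebraic equation of the required shape.
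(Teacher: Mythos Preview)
Your induction framework and your treatment of the X-inner case are correct and match the paper's argument exactly. The gap is entirely in the X-outer case, and it is more serious than the technical obstacle you flag at the end.

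First, even if your commutator expansion could be made to land inside $Q(H_{(n-1)})$, it would at best exhibit \emph{one} quasi-algebraic relation for $\delta_n$ of degree $p^{\ell}$. By Definition~\ref{xxdef3.1}(4), $\mathrm{Outdeg}\,\delta_n$ is the \emph{least} degree of such a relation; you would conclude only that $\mathrm{Outdeg}\,\delta_n \le p^{\ell}$, not that it divides $p^{\ell}$ or is itself a power of $p$. There is no elementary divisibility principle for outer degrees of $\sigma$-derivations that would close this gap. Second, your expansion of $X_n^{p^{\ell}}r$ does not behave as you describe when $\sigma_n \neq \mathrm{id}$: the constant-in-$X_n$ coefficient is a complicated word in $\sigma_n$ and $\delta_n$, not $\delta_n^{p^{\ell}}(r)$, and the Jacobson binomial identity of Section~\ref{xxsec7} does not apply in the form you need because $\sigma_n(r)X_n$ and $\delta_n(r)$ do not commute in the required way. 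So neither the shape of the relation nor its degree is under control.

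The paper's argument avoids both problems by first eliminating $\sigma_n$. A result of Kharchenko (recorded as \cite[Lemmas~5 and~6]{CL2}) says that on a prime PI ring, if a $\sigma$-derivation $\delta$ is X-outer then $\sigma$ is automatically X-inner: there is a unit $u\in Q(B)$ with $\sigma(b)=ubu^{-1}$ and $\delta = u d$ for an X-outer \emph{ordinary} derivation $d$. By \cite[Lemma~1.3(2)]{CL1} one then has $Q(B)[x;\sigma,\delta]\cong Q(B)[x;d]$, reducing the PI-degree computation to the differential case. Finally, a further theorem of Kharchenko (stated and proved in \cite[Theorem, p.~60]{CL2}) asserts that in characteristic $p$ the outer degree of an ordinary derivation on a prime PI ring is a power of $p$; intuitively, the minimal quasi-algebraic relation is forced to be a $p$-polynomial in $d$. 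This is the genuinely nontrivial input your argument is missing, and it is not a consequence of the noncommutative binomial theorem alone.
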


\begin{proof} 
We argue by induction on $n$, where $H$ is an 
$n$-step IHOE. When $n = 1$ $H = \Bbbk [x]$ and the 
result is clear. For the induction step, write $B$ 
for $H_{(n-1)}$, so, by the induction hypothesis,
\begin{equation}
\label{E3.3.1}\tag{E3.3.1} 
\PIdeg  B = p^t,
\end{equation}
for some $t \geq 0$. After relabeling, 
$H = B[x; \sigma, \delta]$ for 
$\sigma \in \mathrm{Aut}_{\Bbbk-\mathrm{alg}}(B)$ and 
a $\sigma$-derivation $\delta$ of $B$. By Proposition 
\ref{xxpro1.2}(1) and Corollary \ref{xxcor2.4}, $B$ and 
$H$ are both prime PI-algebras. Thus, by Theorem 
\ref{xxthm3.2}, $\sigma$ is quasi-inner and $\delta$ 
is quasi-algebraic, and there are two cases to 
consider.

\noindent 
{\bf Case $(a)$:} $\delta$ is X-outer. Then, since $B$ 
is a prime PI ring, a result of Kharchenko, see  
\cite[Lemmas 5 and 6]{CL2}, guarantees the existence of a unit 
$u$ of the quotient ring $Q(B)$ and an X-outer derivation 
$d$ of $Q(B)$ such that, for all $b \in B$,
$$ \sigma (b) = ubu^{-1} \qquad {\text{and}} \qquad 
\delta = ud. $$
Note that $\sigma$ and $\delta$ extend respectively to 
an automorphism and a $\sigma$-derivation of $Q(B)$. 
Then, by \cite[Lemma 1.3(2)]{CL1},
\begin{equation}
\label{E3.3.2} \tag{E3.3.2} 
Q(B)[x; \sigma, \delta] \cong Q(B)[x; d].
\end{equation}
Now
\begin{equation}
\label{E3.3.3}\tag{E3.3.3} 
\PIdeg B = \PIdeg Q(B)
\end{equation}
and
\begin{equation}
\label{E3.3.4}\tag{E3.3.4} 
\PIdeg H = \PIdeg Q(B)[x;\sigma, \delta].
\end{equation}
To prove the induction step in Case (a) it is therefore 
enough, by \eqref{E3.3.1}, \eqref{E3.3.2}, \eqref{E3.3.3} 
and \eqref{E3.3.4}, to prove that
\begin{equation}
\label{E3.3.5}\tag{E3.3.5}
\PIdeg Q(B)[x; d] = \PIdeg B \times p^{\ell}
\end{equation}
for some $\ell \geq 0$. But \eqref{E3.3.5} follows 
immediately from Theorem \ref{xxthm3.2}(a) and a 
theorem of Kharchenko, stated and proved as 
\cite[Theorem, p.60]{CL2}.

\noindent 
{\bf Case $(b)$:} $\delta$ is X-inner. Now Theorem 
\ref{xxthm3.2}(b) applies. But clearly, from the 
definition,
$$ \mathrm{Outdeg}\;  \sigma \mid |\sigma|. $$
Moreover, by \cite[Theorem 1.3(ii)]{Hua} and Proposition 
\ref{xxpro1.4}(2),
$$ |\sigma| \mid p^n.$$
Therefore (\ref{E3.3.1}) is proved in this case also, 
and so the theorem follows. 
\end{proof}

\begin{remarks}
\label{xxrem3.4} 
(1) A second proof of Theorem \ref{xxthm3.3} will be 
given Section \ref{xxsec4}, where we obtain it as a 
corollary of Theorem \ref{xxthm4.3} together with a 
result of Etingof \cite{Et} on filtered deformations 
of commutative domains in positive characteristic.

\noindent 
(2) Faced with Theorem \ref{xxthm3.3} it is 
natural to ask

\begin{question}
\label{xxque3.5}
What is the power of $p$ occurring in Theorem 
\ref{xxthm3.3}?
\end{question}

This is a delicate matter, as can be seen from 
the case of enveloping algebras. Assume that 
$\Bbbk$ is algebraically closed of characteristic 
$p > 0$, and let $\mathfrak{g}$ be a finite 
dimensional Lie $\Bbbk$-algebra. For 
$f\in \mathfrak{g}^*$ define
$$\mathrm{stab}_{\mathfrak{g}}(f) 
:= \{ x \in \mathfrak{g} \mid f([x,-]) = 0 \}, $$
and set
$$\mathrm{ind} \mathfrak{g} 
:= \mathrm{min} \{ \mathrm{dim}_{\Bbbk} 
\mathrm{stab}_{\mathfrak{g}} (f) \mid 
f \in \mathfrak{g}^* \}. 
$$

The first Kac-Weisfeiler conjecture \cite{KW} 
proposed that when $\mathfrak{g}$ is 
restricted,
\begin{equation}
\label{E3.5.1}\tag{E3.5.1} 
\mathrm{PIdeg}U(\mathfrak{g}) = 
p^{\frac{1}{2}(\mathrm{dim}\mathfrak{g} - 
\mathrm{ind}\mathfrak{g})}.
\end{equation}
This was already known, due to Rudakov \cite{Ru}, 
when $\mathfrak{g}$ is the Lie algebra of a 
reductive group (in this case $\mathrm{ind}\mathfrak{g}$ 
is the rank of $\mathfrak{g}$); and Strade \cite{St} 
proved the conjecture for $\mathfrak{g}$ solvable in 
1978, extending the completely solvable case done in 
\cite{KW}. Premet and Skryabin \cite{PS} confirmed the 
conjecture for all restricted $\mathfrak{g}$ admitting 
a toral stabilizer, and also showed that 
$p^{\frac{1}{2}(\mathrm{dim}\mathfrak{g} - 
\mathrm{ind}\mathfrak{g})}$ 
is a lower bound for the PI-degree for all restricted 
$\mathfrak{g}$. The general case, however, remains open, 
although it has recently been confirmed in \cite{MSTT} 
for all restricted subalgebras $\mathfrak{g}$ of 
$\mathfrak{gl}_n (\Bbbk)$, for $p \gg 0$. Note, 
however, that the conjecture is not true if the 
hypothesis that $\mathfrak{g}$ is restricted is 
omitted - Topley \cite{To} exhibits pairs of Lie 
$\Bbbk$-algebras $\mathfrak{g}_1$ and $\mathfrak{g}_2$, 
both solvable, with $\mathfrak{g}_1 \ncong \mathfrak{g}_2$ 
and indeed $\mathrm{ind}\mathfrak{g}_1 \neq 
\mathrm{ind}\mathfrak{g}_2$, but with 
$U(\mathfrak{g}_1) \cong U(\mathfrak{g}_2)$.

It might be easier to approach Question \ref{xxque3.5} 
by first seeking an upper bound:

\begin{question}
\label{xxque3.6}
Is the PI-degree of an $n$-step IHOE over a field 
$\Bbbk$ of characteristic $p$ bounded above by 
$p^{\frac{n}{2}}$? 
\end{question}

We show in $\S$\ref{xxsec8.2} that this is true when $n\leq 2$.

\noindent 
(3) As recalled at the start of $\S$\ref{xxsec3.1}, 
when $\Bbbk$ is algebraically closed the PI degree 
is the maximum dimension of a simple $H$-module, 
so Question \ref{xxque3.5} leads one to the broader 
issue of the possible dimensions of all the simple 
$H$-modules, when $H$ is an $n$-step IHOE over an 
algebraically closed field of characteristic $p$. 
One might hope that every simple module has 
dimension a power of $p$, and we confirm that this 
is the case for all $2$-step IHOEs in Proposition 
\ref{xxpro8.2}. Further positive evidence is 
provided by the enveloping algebras of completely 
solvable Lie algebras. Such a Lie algebra 
$\mathfrak{g}$ of dimension $n$  has a chain of 
$n+1$ ideals from $\{0\}$ to $\mathfrak{g}$, 
so that $U(\mathfrak{g})$ is an $n$-step IHOE. Kac 
and Weisfeler showed in \cite{KW} that the dimension 
of every simple $U(\mathfrak{g})$-module is a power 
of $p$. But the naive hope fails for 
$\mathfrak{sl}(2, \Bbbk)$ when $\Bbbk$ has 
characteristic $p>2$, since $U(\mathfrak{sl}(2,\Bbbk))$ 
is a 3-step IHOE \cite[Example 3.1(iv)]{BOZZ}, and 
Rudakov and Shafarevitch showed \cite{RS} that the 
dimensions of the simple 
$U(\mathfrak{sl}(2,\Bbbk))$-modules are $1,\ldots,p$. 
These examples and results suggest the following 
possibility:

\begin{question}
\label{xxque3.7}
Suppose $\Bbbk$ is an algebraically closed field of 
characteristic $p > 0$. Let $H$ be an $n$-step IHOE 
over $\Bbbk$, which has a chain of Hopf subalgebras 
as in \eqref{E1.1.1} with each $\sigma_i$ being the 
identity. Is the dimension of every simple $H$-module 
a power of $p$?
\end{question}

The above question has a positive answer for all 
2-step IHOEs - see Proposition \ref{xxpro8.2}.
\end{remarks}

\subsection{The antipode and Nakayama automorphism}
\label{xxsec3.2} 
The antipode $S$ of a Hopf algebra that is a finite 
module over its center has finite order up to inner 
automorphisms by \cite[Corollary 0.6(c)]{BZ1}. In the 
case of IHOEs in positive characteristic, we can 
easily obtain information about the order, as we 
show below. The Nakayama automorphism $\nu$ of an 
AS-Gorenstein Hopf algebra provides the twist in 
its rigid dualizing complex, see 
\cite[Sections 0.2, 0.3]{BZ1} for details. It is 
defined up to an inner automorphism. The 
\emph{Nakayama order}, that is the lowest power 
$\ell$ such $\nu^{\ell}$ is inner, is finite in 
the current setting thanks to 
\cite[Corollary 0.6(c)]{BZ1}. Again, more can be 
said about its precise value.

\begin{theorem}
\label{xxthm3.8} 
Let $n\geq 1$ and let $H$ be an $n$-step IHOE over 
a field $\Bbbk$ of characteristic $p > 0$.
\begin{enumerate}
\item[(1)] 
The order of the antipode $S$ divides $4 p^{n-1}$.
\item[(2)] 
The Nakayama order of $H$ 
divides $2 p^{n}$.
\end{enumerate}
\end{theorem}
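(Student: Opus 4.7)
The plan is to prove (1) by induction on $n$, and to derive (2) from (1) together with the standard Brown--Zhang formula relating the Nakayama automorphism to $S^{2}$ and a winding automorphism.

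For part (1), the base case $n=1$ is immediate: with $H=\Bbbk[X_{1}]$ and $X_{1}$ primitive, $S(X_{1})=-X_{1}$, hence $S^{2}=\id$ and the order of $S$ divides $4=4p^{0}$. For the inductive step, I would observe that $H_{(n-1)}$ is itself an $(n-1)$-step IHOE and a Hopf subalgebra of $H$, so the inductive hypothesis yields that $T:=S^{4p^{n-2}}$ restricts to the identity on $H_{(n-1)}$. From \eqref{E1.2.1} one reads off
$$ S(X_{n}) \;=\; -X_{n} - \mu\circ(\id\otimes S)(w_{n}) \;\in\; -X_{n}+H_{(n-1)}, $$
which forces $T(X_{n})=X_{n}+g$ for some $g\in H_{(n-1)}$. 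Because $T|_{H_{(n-1)}}=\id$, iteration gives $T^{k}(X_{n})=X_{n}+kg$ for every $k\geq 1$; the identity $pg=0$ in characteristic $p$ then delivers $T^{p}=S^{4p^{n-1}}=\id$ on the generators of $H$, and hence on $H$ itself.

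For part (2), the starting point is the Brown--Zhang formula (\cite[Sections 0.2, 0.3]{BZ1}) for the Nakayama automorphism of an AS-Gorenstein Hopf algebra,
$$ \nu \;\equiv\; S^{2}\circ\Xi^{\ell}_{\xi}\pmod{\mathrm{Inn}(H)}, $$
where $\xi$ is the left homological integral character and $H$ is AS-Gorenstein by Corollary \ref{xxcor2.4}. First I would verify that $S^{2}$ commutes with $\Xi^{\ell}_{\xi}$: for any character $\chi$ of a Hopf algebra one has $\chi\circ S=\chi^{-1}$ (the convolution inverse), hence $\chi\circ S^{2}=\chi$; coupling this with $\Delta\circ S^{2}=(S^{2}\otimes S^{2})\circ\Delta$ yields
$$ \Xi^{\ell}_{\xi}\circ S^{2} \;=\; S^{2}\circ\Xi^{\ell}_{\xi\circ S^{2}} \;=\; S^{2}\circ\Xi^{\ell}_{\xi}. $$
Consequently, modulo inner automorphisms,
$$ \nu^{2p^{n}} \;\equiv\; (S^{2})^{2p^{n}}\circ(\Xi^{\ell}_{\xi})^{2p^{n}}. $$
Part (1) gives $(S^{2})^{2p^{n-1}}=\id$, hence $(S^{2})^{2p^{n}}=\id$; Proposition \ref{xxpro1.4}(2) gives $(\Xi^{\ell}_{\xi})^{p^{n}}=\id$, hence $(\Xi^{\ell}_{\xi})^{2p^{n}}=\id$. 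Therefore $\nu^{2p^{n}}$ is inner.

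The only ingredient not established elsewhere in the present paper is the Brown--Zhang formula for $\nu$, but that is standard for AS-Gorenstein Hopf algebras. Everything else is a short characteristic-$p$ calculation built directly on \eqref{E1.2.1}, Proposition \ref{xxpro1.4}(2), Corollary \ref{xxcor2.4}, and the identity $\chi\circ S^{2}=\chi$ for characters; the main task is really just bookkeeping of the exponents of $p$ at each step.
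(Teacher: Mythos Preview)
Your argument is correct. For part~(2) you follow essentially the same route as the paper: invoke the Brown--Zhang formula $\nu=S^{2}\circ\Xi_{\eta}$ (the paper uses the right winding automorphism rather than the left, but Proposition~\ref{xxpro1.4}(2) covers both), note that $S^{2}$ and the winding automorphism commute, and bound the two orders separately. Your verification of commutativity via $\chi\circ S^{2}=\chi$ is a nice touch that the paper simply asserts.

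For part~(1), however, your approach is genuinely different and more elementary than the paper's. The paper does \emph{not} iterate $S$ directly on $X_{n}$. Instead it imports the identity $S^{4}=\Xi^{l}_{\eta}\circ(\Xi^{r}_{\eta})^{-1}$ from \cite[Theorem~0.6]{BZ1} (valid here on the nose because $H$ has no nontrivial units), and then runs the induction by analysing how these winding automorphisms act on $X_{n}$, exactly as in the proof of Proposition~\ref{xxpro1.4}. Your argument bypasses the homological integral entirely: from $S(X_{n})\in -X_{n}+H_{(n-1)}$ you get $S^{2k}(X_{n})\in X_{n}+H_{(n-1)}$ for all $k$, so $T:=S^{4p^{n-2}}$ fixes $H_{(n-1)}$ by induction and sends $X_{n}$ to $X_{n}+g$ with $g\in H_{(n-1)}$; since $T$ is $\Bbbk$-linear and fixes $g$, the iteration $T^{k}(X_{n})=X_{n}+kg$ is immediate and $T^{p}=\id$ follows. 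This is shorter and uses only the explicit antipode formula derivable from \eqref{E1.2.1}, whereas the paper's route has the advantage of making the link to the integral character explicit (which feeds into Remark~\ref{xxrem3.9}).
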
  
 
\begin{proof} Note that every invertible element in $H$ 
is in $\Bbbk$, so the Nakayama order of $H$ coincides with the 
(well-defined) order of the Nakayama automorphism $\nu$ of $H$.

\noindent
(1) The proof of part (1) is similar to the proof of 
Proposition \ref{xxpro1.4}(2). We use induction on $n$
to show that the order of $S^4$ divides $p^{n-1}$. 
The assertion holds trivially for $n=1$. Now assume that the 
assertion holds for $(n-1)$-step IHOEs. Let $H$ an 
$n$-step IHOE and write it as $B[x;\sigma,\delta]$
where $B$ is an $(n-1)$-step IHOE. Suppose 
the order of $S^4$ restricted to $B$ is $p^t$ for some 
$t\leq n-2$. 

Let $\eta: H\to \Bbbk$ be the right character of the 
left homological integral of $H$ \cite{LWZ}.
Let $\Xi^l_{\eta}$ [respectively, $\Xi^r_{\eta}$] be the 
corresponding left [respectively, right] winding 
automorphism of $H$. By \cite[Theorem 0.6]{BZ1} and the 
fact that $H$ has no nontrivial inner automorphism,
$S^4=\Xi^l_{\eta}\circ (\Xi^r_{\eta})^{-1}$.   
Since $B$ is a Hopf subalgebra of 
$H$ and $\eta\mid_{B}$ is a character of $B$, 
$\Xi^l_{\eta}$ [respectively, $\Xi^r_{\eta}$] restricted 
to $B$, still denoted by $\Xi^l_{\eta}$ [respectively, 
$\Xi^r_{\eta}$], is a left [respectively, right] winding 
automorphism of $B$. By the induction hypothesis, 
$(S^4)^{p^t}$ is the identity of $B$. It remains to show 
that $(S^4)^{p^{t+1}}$ is the identity when applied to 
$x$. By Proposition \ref{xxpro1.2}(2), we may assume that 
\eqref{E1.4.1} holds, so that
$$\Xi^l_{\eta}(x)=x +\eta(x)+\mu \circ (\eta\otimes 1)(w)=x+s$$
where $s=\eta(x)+\mu \circ (\eta\otimes 1)(w)\in B$. 
Similarly,
$$(\Xi^r_{\eta})^{-1}(x)=x+s'$$
for some $s'\in B$. These imply that 
$$(S^4)^{p^t}(x)=(\Xi^l_{\eta} (\Xi^r_{\eta})^{-1})^{p^t}(x)=x+s''$$
for some $s''\in B$. Note that $(S^4)^{p^t}(s'')=s''$ as 
the order of $S^4$ restricted to $B$ is $p^t$. Then
$$(S^4)^{p^{t+1}}(x)=((S^4)^{p^t})^{p}(x)=x+ps''=x.$$
Therefore $(S^4)^{p^{t+1}}$ is the identity map on $H$ 
as required.

\noindent
(2) By \cite[Theorem 0.3]{BZ1}, the Nakayama automorphism 
of $H$ is of the form $S^2 \circ \Xi^{r}_{\eta}=
\Xi^{r}_{\eta}\circ S^2$. The order of $S^2$ 
divides $2p^{n-1}$ by part (1) and the order of $\Xi^{r}_{\eta}$
divides $p^n$ by Proposition \ref{xxpro1.4}(2). Then the
assertion follows.
\end{proof}

\begin{remark}
\label{xxrem3.9}
Recall that $\eta$ is the right character of the 
left integral of $H$.  With a slightly more careful 
analysis along the above lines, combined also with 
Proposition \ref{xxpro8.1}, one can show that 
the order of $S$ divides $\mathrm{max}\{1, 4 p^{n- t}\},$ 
where
$$t := \mathrm{max}\{2, n_0\} \qquad \textit{ and } 
\qquad n_0 :=\# \{ i\mid \Xi^{l}_{\eta}(w_i)=\Xi^r_{\eta}(w_i)\},$$
where $w_i$ is as in \eqref{E1.2.1}.
\end{remark}

\section{Associated graded algebra of an IHOE}
\label{xxsec4} 
The main result in this section is Theorem \ref{xxthm4.3}. 
In this section, we do not assume that 
${\rm{char}}\; \Bbbk>0$ though we keep using $\Bbbk$ as 
the base field.

\subsection{Construction of a filtration}
\label{xxsec4.1} 
Recall that, by \cite[Theorem 6.9]{Zh}, if $H$ is an 
affine connected Hopf algebra of finite Gel'fand-Kirillov 
dimension $n$ over an algebraically closed field $\Bbbk$ 
of characteristic 0, then the associated graded algebra 
of $H$ with respect to the coradical filtration is a 
commutative polynomial algebra in $n$ variables. This 
result does not extend to positive characteristic - in 
the first place there are nontrivial finite dimensional 
connected Hopf algebras in characteristic $p$, for 
example $\Bbbk [x]/\langle x^p \rangle$; and, second, 
the coradical filtration is typically much coarser in 
this setting. Thus, for instance, there is often an infinite 
dimensional space of primitive elements, as 
in $\Bbbk [x]$ for example. At least for IHOEs there is nevertheless 
an analogue of Zhuang's result, provided one uses a more 
suitable filtration. We begin by describing such a 
filtration and proving its required properties.

Let $H$ be an $n$-step IHOE with notation as in 
Definition \ref{xxdef1.1}. The ordered monomials 
$x_1^{m_1} \cdots x_n^{m_n}$ with 
$m_i \in \mathbb{Z}_{\geq 0}$ constitute a 
$\Bbbk$-basis $\mathcal{B}(H)$ of $H$, which we call its \emph{PBW basis}. 
(This holds true for all iterated Ore extensions, not 
just for Hopf algebras.)
For $\alpha \in H$, call the presentation of $\alpha$ in 
terms of the PBW basis the \emph{PBW expression of $\alpha$}, 
and let 
$$\mathrm{PBWsupp}(\alpha) := \{ \mathbf{m} \in \mathcal{B}(H) \mid \mathbf{m} 
\textit{ occurs in the PBW expression for } \alpha \}.$$
We proceed now to define a degree function 
$\mathrm{deg} : H \longrightarrow \mathbb{Z}_{\geq 0}$.  
Recall from Proposition \ref{xxpro1.2}(4) the elements 
$a_{ji} \in H_{(j-1)}$ for $i,j = 1, \dots , n$ with $i>j$, 
such that
\begin{equation}\notag 
\sigma_i (x_j) = x_j + a_{ji}
\end{equation}
where $a_{ji}$ is given as in \eqref{E1.2.2}.
Moreover we can define elements $c_{ji} \in H_{(i-1)}$ 
such that, for all $i,j = 1, \ldots , n$ with $i > j$,
\begin{equation}\notag
\delta_i(x_j) = c_{ji}.
\end{equation}
Thus the defining relations of $H$ are  
\begin{equation}
\label{E4.0.1}\tag{E4.0.1} 
x_i x_j = x_j x_i + a_{ji} x_i + c_{ji}, \qquad 
(1 \leq j < i \leq n). 
\end{equation}
Recall that $w_i$ is defined in 
\eqref{E1.2.1}; fix an expression
$$w_i=\sum w_{i(1)}\otimes w_{i(2)},$$
where $w_{i(1)},w_{i(2)}$ are in $H_{(i-1)}\cap 
\ker \epsilon$. For each $\ell>i$, let $\chi_{\ell}$
be the character of $H_{(\ell-1)}$ such that
$\sigma_{\ell}=\tau^{r}_{\chi_{\ell}}$ as given in 
Proposition \ref{xxpro1.2}(4). Then by
\eqref{E1.2.2},
\begin{equation}
\notag
a_{i\ell} = \chi_{\ell}(x_i) + 
\sum w_{i(1)}\chi_{\ell}(w_{i(2)}) \in H_{(i-1)}.
\end{equation}
Define the degree $\deg(\lambda) := 0$ for 
$\lambda \in \Bbbk$, and inductively define the degree 
$\deg(x_i) := d_i \in \mathbb{Z}_{> 0}$ and 
$\mathrm{deg}(\alpha)$ for $\alpha \in H_{(i)}$, as 
follows:
\begin{enumerate}
\item[$\bullet$] 
$d_1 = 1.$
\item[$\bullet$] 
Suppose that $i > 1$ and that $d_1, \ldots  , d_{i-1}$ 
have been defined. For each PBW basis monomial 
$\mathbf{m} = x_1^{m_1} x_2^{m_2} \cdots x_{i-1}^{m_{i-1}}$, 
set
\begin{equation}
\label{E4.0.2}\tag{E4.0.2} 
\deg(\mathbf{m}) := \sum_{j=1}^{i-1} m_j d_j. 
\end{equation}
\item[$\bullet$] 
For $\alpha \in H_{(i - 1)}$ define
\begin{equation}
\label{E4.0.3} \tag{E4.0.3}
\deg(\alpha) 
:= \max\{ \deg(\mathbf{m}) \mid 
\mathbf{m} \in \mathrm{PBWsupp}(\alpha) \}. 
\end{equation}
\item[$\bullet$]
Since $\deg$ is defined on $H_{(i-1)}$, let 
$$D(w_i)=\max_{i(1)}\{ \deg(w_{i(1)})\}$$
which is only dependent on $i$. Then we have 
$$D(w_i)\geq \max_{\ell > i} \{ \deg(a_{i\ell})\}.$$
Now set $d_i \in \mathbb{Z}_{> 0}$ such that
\begin{equation}
\label{E4.0.4}\tag{E4.0.4}
d_i> D(w_i)\quad  
( \geq \max_{\ell>i} \{ \deg(a_{i\ell})\}),
\end{equation}
and such that
\begin{equation}
\label{E4.0.5}\tag{E4.0.5}
d_i \geq  \max_{j<i} \{ \deg(c_{ji})\}.
\end{equation}
\end{enumerate}
Finally, after the above steps are completed up 
to $i = n$, extend \eqref{E4.0.2} 
[resp. \eqref{E4.0.3}] to \emph{all} PBW basis 
monomials [resp. every element] of $H$. Clearly 
this procedure yields a well-defined degree 
for all $\alpha \in H$. We define a filtration 
$\mathcal{C}:=\{\mathcal {C}_i\}_{i\geq 0}$ by setting,
for all $i\geq 0$,
$$\mathcal{C}_i := \{ \alpha \in H \mid \deg(\alpha) 
\leq i \}.$$ 
It is clear that $\mathcal{C}_0 = \Bbbk$. 

\begin{lemma}
\label{xxlem4.1} Keep the above notation.
\begin{enumerate}
\item[(1)] 
For $1 \leq j < i \leq n$,
$$ x_i x_j = x_j x_i + (\textit{lower degree terms}). $$
\item[(2)] 
For $1 \leq j < i \leq n$,
$$ \deg(x_i x_j) = \deg(x_j x_i). $$
\item[(3)] 
For all monomials 
$\mathbf{m} = x_{i_1}^{t_1}x_{i_2}^{t_2} \cdots 
x_{i_m}^{t_m}$, where $i_{\ell} \in \{1, \ldots , n \}$ 
and $t_{\ell} \in \mathbb{Z}_{\geq 0}$ for all 
$\ell = 1, \ldots , m$,
\begin{equation} 
\label{E4.1.1}\tag{E4.1.1} 
\deg(\mathbf{m}) = \sum_{\ell = 1}^m t_{\ell}d_{i_{\ell}}.
\end{equation} 
\end{enumerate}
\end{lemma}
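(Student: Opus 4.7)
For part (1), I plan to use the defining relation \eqref{E4.0.1} directly. The term $x_j x_i$ is a PBW monomial of weighted degree $d_j + d_i$. For the correction $a_{ji} x_i$: the bound \eqref{E4.0.4}, applied with the role of $i$ played by $j$ and with $\ell = i$, gives $\deg(a_{ji}) \le D(w_j) < d_j$; since $a_{ji} \in H_{(j-1)}$ is a $\Bbbk$-linear combination of ordered monomials in $x_1, \ldots, x_{j-1}$, the product $a_{ji} x_i$ is a sum of PBW monomials of the form $x_1^{m_1} \cdots x_{j-1}^{m_{j-1}} x_i$, each of weighted degree strictly less than $d_j + d_i$. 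The correction $c_{ji}$ satisfies $\deg(c_{ji}) \le d_i < d_i + d_j$ by \eqref{E4.0.5}. Both corrections are thus of strictly smaller degree than $x_j x_i$, proving (1). Part (2) is immediate: $x_i x_j$ equals $x_j x_i$ plus a sum of strictly lower-degree PBW monomials, so both products have degree $d_i + d_j$.

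For part (3), I would introduce the \emph{weighted length} of an arbitrary (not necessarily ordered) monomial $\mathbf{m} = x_{i_1}^{t_1} \cdots x_{i_m}^{t_m}$ by $W(\mathbf{m}) := \sum_\ell t_\ell d_{i_\ell}$, which satisfies the obvious additivity $W(\mathbf{m}_1 \mathbf{m}_2) = W(\mathbf{m}_1) + W(\mathbf{m}_2)$. The claim to prove is $\deg(\mathbf{m}) = W(\mathbf{m})$. The argument is by a double induction: the primary induction is on $W(\mathbf{m})$, and the secondary is on the total number of inversions of $\mathbf{m}$, that is, unordered pairs of positions $p < q$ with $i_p > i_q$. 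If $\mathbf{m}$ has no inversions, it is a PBW monomial and the claim is \eqref{E4.0.2}. Otherwise, choose an adjacent position where $\mathbf{m} = A \cdot x_i x_j \cdot B$ with $i > j$ and invoke the defining relation to get
\begin{equation*}
\mathbf{m} \;=\; A \cdot x_j x_i \cdot B \;+\; A \cdot (a_{ji} x_i + c_{ji}) \cdot B.
\end{equation*}
Swapping one adjacent inverted pair strictly reduces the total inversion count while preserving $W$, so by the secondary induction $\deg(A x_j x_i B) = W(\mathbf{m})$. The correction $A(a_{ji} x_i + c_{ji})B$ is, by the bounds established in (1) together with the additivity of $W$ under concatenation, a $\Bbbk$-linear combination of monomials each of weighted length strictly below $W(\mathbf{m})$; the primary induction then gives $\deg \le W < W(\mathbf{m})$ on every such piece. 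Combining via sub-additivity of $\deg$ under sums (clear from the max definition \eqref{E4.0.3}), the top-weighted PBW component of $A x_j x_i B$ cannot be cancelled by the strictly lower-weighted correction, yielding $\deg(\mathbf{m}) = W(\mathbf{m})$.

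The main obstacle is purely bookkeeping: the correction $a_{ji}$, once multiplied by $A$ and $B$, can in principle produce monomials with \emph{more} inversions than $\mathbf{m}$, so induction on inversions alone is circular. The primary/secondary structure above rescues this because each correction monomial has strictly smaller $W$, at which point the outer induction takes over. Once one verifies the strict $W$-drop in the correction — which rests on the strict inequality $d_j > D(w_j)$ in \eqref{E4.0.4} and on $d_i \geq \deg(c_{ji})$ in \eqref{E4.0.5} — the rest is the standard PBW straightening argument adapted to the weighted filtration $\mathcal{C}$.
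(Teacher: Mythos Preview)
Your proposal is correct and follows essentially the same route as the paper. For (1) and (2) your argument is identical to the paper's; for (3) the paper phrases the same double induction as a minimal-counterexample argument (choose a counterexample of minimal weight, and among those, one with the fewest badly ordered pairs), which is logically equivalent to your primary/secondary induction on $W$ and then on inversions.
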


\begin{proof} (1) 
Let $j < i$. In the relation \eqref{E4.0.1} 
$a_{ji} \in H_{(j-1)}$ and $c_{ji} \in H_{(i-1)}$. Thus
\begin{eqnarray*} 
\deg(a_{ji} x_i + c_{ji}) &=& \max\{\deg(a_{ji} x_i), 
\deg( c_{ji}) \}\\
&<& d_j + d_i \\
&=& \deg(x_j x_i),
\end{eqnarray*}
where the first equality follows from \eqref{E4.0.3} 
and the fact that there is empty intersection between 
the PBW expressions for $a_{ji}x_i$ and $c_{ji}$, so 
that no cancellation can occur, and the inequality 
follows from \eqref{E4.0.4} and \eqref{E4.0.5}, since 
$a_{ji}x_i$ is already in PBW form.

\noindent (2) 
This follows from part (1) and the definition of the 
degree of elements of $H$.

\noindent (3) 
Define the \emph{weight} of a not necessarily 
PBW-ordered monomial $\mathbf{m} = 
x_{i_1}^{t_1} \cdots x_{i_m}^{t_m}$ to be
$$ \mathrm{wt}(\mathbf{m}) := \sum_{\ell = 1}^m t_{\ell} 
d_{i_{\ell}}. $$
Suppose that assertion (3) is false, and choose a 
counterexample $\mathbf{m}$ of minimal weight, say 
$\mathrm{wt}(\mathbf{m}) = d > 0$. Then $\mathbf{m}$ 
cannot be an ordered monomial by the definition 
\eqref{E4.0.2}. So amongst counterexamples to assertion (3) 
of weight $d$, choose $\mathbf{m}$ to be one with the 
minimal number $\omega$ of \emph{badly ordered pairs} 
- that is, pairs of generators occurring in $\mathbf{m}$ 
as $\mathbf{m} = \cdots x_r \cdots x_s \cdots $ with 
$r > s$. Clearly, there must be an adjacent bad pair 
in $\mathbf{m}$. That is, there exist $s,r$ with 
$1 \leq s < r \leq n$, with
\begin{eqnarray*}
\mathbf{m} &=& x_{i_1}\cdots x_r x_s \cdots x_{i_m}\\
&=& x_{i_1} \cdots (x_s x_r + a_{sr} x_r + c_{sr}) \cdots x_{i_m}\\
&=& (x_{i_1} \cdots x_s x_r \cdots x_{i_m}) 
     + (x_{i_1} \cdots  a_{sr} x_r \cdots x_{i_m}) \\
& & \qquad \qquad + (x_{i_1} \cdots  c_{sr} \cdots x_{i_m}).
\end{eqnarray*}
Here, the first monomial on the right side has fewer than 
$\omega$ badly ordered pairs, so assertion (3) is true 
for it by choice of $\mathbf{m}$. The second and third 
brackets on the right consist of monomials of weight 
strictly less than $d$, by \eqref{E4.0.4} and 
\eqref{E4.0.5}. So, again by choice of $\mathbf{m}$, 
assertion (3) holds for all the monomials in the second 
and third brackets on the right; in particular, their 
degree is strictly less than the degree of 
$x_{i_1} \cdots x_s x_r \cdots x_{i_m}$. Thus 
$$ \deg(\mathbf{m}) = \deg(x_{i_1} \cdots x_s x_r 
\cdots x_{i_m}) = \sum_{\ell = 1}^m t_{\ell} d_{i_{\ell}}, $$
and $\mathbf{m}$ is not a counterexample. This proves (3).
\end{proof}

\begin{lemma}
\label{xxlem4.2}
Continue with the above notation.
\begin{enumerate}
\item[(1)] 
The filtration $\mathcal{C}$ defined before Lemma 
\ref{xxlem4.1} is an algebra filtration of $H$.
\item[(2)] 
The associated graded algebra 
$\mathrm{gr}_{\mathcal{C}}H$ is a factor ring of 
the commutative polynomial ring 
$\Bbbk[\overline{x}_1, \ldots , \overline{x}_n]$
where $\overline{x_i}$ is the principal symbol of
$x_i$.
\end{enumerate} 
\end{lemma}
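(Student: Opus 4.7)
The plan is to deduce both parts almost directly from Lemma \ref{xxlem4.1}, whose part (3) has already absorbed the main combinatorial work of tracking weights through the defining relations \eqref{E4.0.1}.

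For part (1), I would show $\deg(\alpha\beta) \leq \deg(\alpha) + \deg(\beta)$ for all $\alpha, \beta \in H$. Since degree is sub-additive under addition by \eqref{E4.0.3}, it suffices to establish this on products of PBW basis monomials $\mathbf{m} = x_1^{a_1}\cdots x_n^{a_n}$ and $\mathbf{n} = x_1^{b_1}\cdots x_n^{b_n}$. The concatenation $\mathbf{m}\mathbf{n}$ is a (non-PBW-ordered) word in the generators whose weight in the sense of the proof of Lemma \ref{xxlem4.1}(3) is $\sum_i(a_i+b_i)d_i = \deg(\mathbf{m}) + \deg(\mathbf{n})$. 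Applying Lemma \ref{xxlem4.1}(3) to $\mathbf{m}\mathbf{n}$ then yields $\deg(\mathbf{m}\mathbf{n}) = \deg(\mathbf{m}) + \deg(\mathbf{n})$, which is more than enough.

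For part (2), I would first note that $\mathrm{gr}_\mathcal{C} H$ is generated as a $\Bbbk$-algebra by the principal symbols $\overline{x}_i$, since Lemma \ref{xxlem4.1}(3) shows that each PBW monomial $x_1^{m_1}\cdots x_n^{m_n}$ sits in $\mathcal{C}_{d}\setminus \mathcal{C}_{d-1}$ with $d = \sum_i m_i d_i$ and has image $\overline{x}_1^{m_1}\cdots \overline{x}_n^{m_n}$ in the degree-$d$ component of $\mathrm{gr}_\mathcal{C} H$. To see commutativity, fix $j < i$ and use \eqref{E4.0.1}: the difference $x_i x_j - x_j x_i = a_{ji}x_i + c_{ji}$ has degree strictly less than $d_i + d_j = \deg(x_i x_j) = \deg(x_j x_i)$ by Lemma \ref{xxlem4.1}(1),(2). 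Passing to $\mathrm{gr}_\mathcal{C} H$ then gives $\overline{x}_i\overline{x}_j = \overline{x}_j\overline{x}_i$, and the universal property of the commutative polynomial ring delivers a surjection $\Bbbk[\overline{x}_1, \ldots , \overline{x}_n] \twoheadrightarrow \mathrm{gr}_\mathcal{C} H$.

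I do not anticipate any serious obstacle: once Lemma \ref{xxlem4.1} is in hand, both claims reduce to essentially bookkeeping verifications. The only subtle point is making sure that PBW-reducing the juxtaposed word $\mathbf{m}\mathbf{n}$ really introduces only terms of weight at most $\deg(\mathbf{m})+\deg(\mathbf{n})$; but this is precisely what the inductive argument behind Lemma \ref{xxlem4.1}(3) guarantees, through the bounds \eqref{E4.0.4} and \eqref{E4.0.5} on the weights of the $a_{ji}$ and $c_{ji}$.
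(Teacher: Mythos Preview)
Your proposal is correct and follows essentially the same approach as the paper: part (1) is reduced to products of PBW monomials and settled by Lemma \ref{xxlem4.1}(3), while part (2) uses Lemma \ref{xxlem4.1}(3) for generation and Lemma \ref{xxlem4.1}(1) for commutativity, yielding the surjection from the polynomial ring. Your write-up is in fact slightly more explicit than the paper's about why the $\overline{x}_i$ generate and about the role of the weight bounds \eqref{E4.0.4}, \eqref{E4.0.5}, but the underlying argument is the same.
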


\begin{proof} (1) 
We need to prove that, for all 
$r,s \in \mathbb{Z}_{\geq 0}$,
$$ \mathcal{C}_r \mathcal{C}_s \subseteq 
\mathcal{C}_{r + s}. $$
For this it is enough to show that if 
$\mathbf{r}, \mathbf{s}$ are ordered monomials 
in $\mathcal{C}_r$ and $\mathcal{C}_s$ respectively, 
then $\deg(\mathbf{r}\mathbf{s}) \leq r + s.$ 
This is immediate from Lemma \ref{xxlem4.1}(3).

\noindent (2) 
It is clear from the definitions that 
$0 \neq \overline{x}_i \in \mathrm{gr}_{\mathcal{C}} H$ 
for all $i = 1, \ldots , n$. From this and \eqref{E4.1.1},
one sees immediately that $\mathrm{gr}_{\mathcal{C}} H$ 
is generated by $\{\overline{x}_i\}_{i=1}^n$.

Commutativity of $\mathrm{gr}_{\mathcal{C}}H$ follows from 
Lemma \ref{xxlem4.1}(1). Therefore there is an algebra
epimorphism $\phi$ from the commutative polynomial ring 
$\Bbbk[\overline{x}_1, \ldots , \overline{x}_n]$ to
$\mathrm{gr}_{\mathcal{C}}H$. 
\end{proof}

\subsection{The associated graded algebra}
\label{xxsec4.2} 
We can now prove the main result of this section.

\begin{theorem}
\label{xxthm4.3} 
Let $\Bbbk$ be an algebraically closed field, and let 
$H$ be an $n$-step IHOE over $\Bbbk$. Then 
positive integer degrees can be assigned to 
the defining variables of $H$ so that the 
corresponding filtration, constructed from 
this assignment by using the PBW basis as above, 
has associated graded algebra which is the 
commutative polynomial $\Bbbk$-algebra on $n$ 
variables.
\end{theorem}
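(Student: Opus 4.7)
The plan is to use Lemmas 4.1 and 4.2 as the technical engine and finish by a Hilbert series comparison. Lemma 4.2(2) already gives a graded surjection
$$\phi : \Bbbk[\overline{x}_1,\ldots,\overline{x}_n] \twoheadrightarrow \mathrm{gr}_{\mathcal{C}}H,$$
where the polynomial ring carries the weighted grading $\deg(\overline{x}_i)=d_i$ inherited from the inductive choice of $d_i$ made in $\S$4.1. So the theorem reduces to showing that $\phi$ is injective.

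To verify injectivity I would first spell out a basis for each filtration piece $\mathcal{C}_i$. By the definition of the degree function, one has $\deg(\alpha)=\max\{\deg(\mathbf{m}):\mathbf{m}\in\mathrm{PBWsupp}(\alpha)\}$, and the PBW monomials $\mathcal{B}(H)$ are $\Bbbk$-linearly independent in $H$ (this is the usual PBW theorem for iterated Ore extensions; see e.g.\ \cite[Theorem 1.2.9(ii)]{McR}). Consequently
$$\mathcal{C}_i \;=\; \mathrm{span}_{\Bbbk}\Bigl\{\,x_1^{m_1}\cdots x_n^{m_n}\;\Big|\; \sum_{j=1}^{n} m_j d_j \leq i\,\Bigr\},$$
and this spanning set is a $\Bbbk$-basis of $\mathcal{C}_i$. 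Hence $\dim_{\Bbbk}(\mathcal{C}_i/\mathcal{C}_{i-1})$ equals the number of ordered monomials of weighted degree exactly $i$, which is precisely $\dim_{\Bbbk}(\Bbbk[\overline{x}_1,\ldots,\overline{x}_n])_i$.

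Since $\phi$ is a graded surjection whose source and target have equal finite dimension in every graded component, $\phi$ must be an isomorphism, and $\mathrm{gr}_{\mathcal{C}}H$ is the commutative polynomial ring in $n$ variables.

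The main obstacle is not this closing dimension count but the earlier book-keeping behind the degree assignment. One has to be sure that the inductive construction of $d_i \in \mathbb{Z}_{>0}$ in $\S$4.1 can in fact be carried out consistently — that is, that $\deg$ is already well defined on $H_{(i-1)}$ when $d_i$ is chosen, and that $d_i$ can be taken strictly larger than $D(w_i)$ and at least $\max_{j<i}\deg(c_{ji})$, so that the commutation relation \eqref{E4.0.1} produces only strictly lower-degree error terms. This is exactly what Lemma 4.1 secures (parts (1) and (2) give $\deg(x_i x_j)=\deg(x_j x_i)$ for $j<i$, and part (3) shows that weighted degree is insensitive to re-ordering of factors). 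Once these points are in hand, commutativity of $\mathrm{gr}_{\mathcal{C}}H$ and the matching Hilbert series force the conclusion; note that algebraic closure of $\Bbbk$ plays no essential role in this finishing argument, entering only through the earlier structural results (in particular Proposition \ref{xxpro1.2}(4)) that guarantee $\sigma_i(X_j)=X_j+a_{ji}$ with $a_{ji}\in H_{(j-1)}$.
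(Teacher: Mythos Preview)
Your proof is correct, but it differs from the paper's main argument. The paper finishes by invoking Gel'fand--Kirillov dimension: since $\GKdim H = n$ (Corollary~\ref{xxcor2.4}) and $\GKdim(\mathrm{gr}_{\mathcal{C}}H) = \GKdim H$ by \cite[Proposition~6.6]{KL}, the surjection from Lemma~\ref{xxlem4.2}(2) cannot have nontrivial kernel, as any proper factor of the polynomial ring has strictly smaller GK-dimension. Your argument instead compares Hilbert functions directly: the PBW basis gives an explicit basis of each $\mathcal{C}_i$, so $\dim_{\Bbbk}(\mathcal{C}_i/\mathcal{C}_{i-1})$ coincides with the number of weighted monomials of degree~$i$, forcing the graded surjection $\phi$ to be an isomorphism degree by degree. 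This is exactly the alternative proof the paper records in Remark~\ref{xxrem4.4}(1). Your route is more elementary and self-contained---it avoids the GK-dimension machinery and in particular does not rely on Corollary~\ref{xxcor2.4}---while the paper's route has the virtue of reusing structural results already established in $\S$\ref{xxsec2}.
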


\begin{proof} Let $H$ be given by Definition 
\ref{xxdef1.1} and define its filtration 
$\mathcal{C}$ by the recipe given in $\S$\ref{xxsec4.1}. 
By Lemma \ref{xxlem4.2}, $\mathrm{gr}_{\mathcal{C}} H$ 
is a factor of the polynomial $\Bbbk$-algebra on 
$n$ generators. On the other hand, 
$\GKdim (H) = n$ by Corollary \ref{xxcor2.4}. 
Since $\mathcal{C}$ is discrete and finite, it 
follows from \cite[Proposition 6.6]{KL} that 
$\mathrm{gr}_{\mathcal{C}} H$ grows at the same 
rate; that is, $\GKdim (\mathrm{gr}_{\mathcal{C}} H) = n$. 
Since proper factors of the polynomial algebra 
on $n$ generators have GKdimension strictly less 
than $n$, the result follows.
\end{proof}

\begin{remarks}
\label{xxrem4.4} 
(1) An alternative proof that 
$\mathrm{gr}_{\mathcal{C}}H$ is a polynomial algebra 
uses the PBW basis directly. Namely, by the definition of
$\mathcal {C}$, $H$ and $\mathrm{gr}_{\mathcal{C}}H$ have 
the same PBW basis, which agrees with the PBW basis 
of $\Bbbk[\overline{x}_1, \ldots , \overline{x}_n]$.
Therefore the map $\phi$ from the proof of Lemma 
\ref{xxlem4.2}(2) is an isomorphism. 

\noindent 
(2) A theorem of Etingof, \cite[Corollary 3.2(ii)]{Et}, 
states that if $\Bbbk$ is a field of characteristic 
$p > 0$ and $A$ is any $\Bbbk$-algebra with 
filtration $\mathcal{C} = \{\mathcal{C}_i : i \geq 0 \}$ 
such that $A$ satisfies a polynomial identity and 
$\mathrm{gr}_{\mathcal{C}} A$ is a commutative 
domain, then the PI-degree of $A$ is a power of $p$. 
In view of Corollary \ref{xxcor2.4} and Theorem 
\ref{xxthm4.3} these hypotheses both are 
satisfied by any IHOE over the field $\Bbbk$, 
yielding a second proof of Theorem \ref{xxthm3.3}.

\noindent 
(3) Etingof asks in \cite[Question 1.1]{Et} whether 
every filtered deformation of an affine commutative 
domain in positive characteristic has to satisfy a 
polynomial identity. Theorem \ref{xxthm4.3} coupled 
with Corollary \ref{xxcor2.4} provide 
some evidence in favour of a positive answer.

\noindent 
(4) Theorem \ref{xxthm4.3} can be restated as: 
every $n$-step IHOE over the field $\Bbbk$ 
is a PBW-deformation of the polynomial ring in $n$ 
variables over $\Bbbk$ in the sense of 
\cite[Section 3]{BeG}. It follows from Zhuang's
theorem \cite[Theorem 6.9]{Zh} that a connected
Hopf algebra over a field $\Bbbk$ of characteristic
zero is a PBW-deformation of the polynomial ring 
in $n$ variables if $n=\GKdim H<\infty$. Given these 
facts, together with the classical PBW theorem for 
enveloping algebras of finite dimensional 
Lie algebras, and the fact that the underlying 
variety of every connected unipotent group of 
dimension $n$ is affine $n$-space over $\Bbbk$, 
\cite[Chap.VII, sec.6, Corollary and Remark 1, p.170]{Se},  
it is natural to ask:

\begin{question}
\label{xxque4.5} 
Is every connected Hopf $\Bbbk$-algebra domain 
of finite Gel'fand-Kirillov dimension a 
PBW-deformation of a polynomial algebra over 
$\Bbbk$?
\end{question}

\noindent 
(5) An $\infty$-step IHOE is defined to be 
$$\lim_{n\to\infty} H_n$$ 
if there is an infinite sequence of $n$-step IHOEs
$$\Bbbk=H_0\subset H_1\subset H_2\subset \cdots 
\subset H_n \subset \cdots$$
such that each $H_n$ satisfies the conditions in 
Definition \ref{xxdef1.1}(2). If $\{d_i\}_{i\geq 1}$
is a strictly increasing sequence of integers defined
by using the process given before Lemma \ref{xxlem4.1},
then we can define a locally finite filtration 
$\mathcal{C}$ of $H$ such that the associated 
graded algebra $\mathrm{gr}_{\mathcal{C}} H$ 
is isomorphic to $\Bbbk[\bar{x}_1,\bar{x}_2,
\cdots,\bar{x}_n,\cdots]$ -- the polynomial 
ring of infinitely many variables.
 
\noindent 
(6) Let $\chi$ be a character of $H$.
Then the right winding automorphism $\tau^{r}_{\chi}$ 
of $H$ preserves the filtration $\mathcal{C}$ 
constructed in Section \ref{xxsec4.1}. It follows 
that the winding automorphism $\tau^r_{\chi}$ induces 
an automorphism of $\mathrm{gr}_{\mathcal{C}}H$, which 
one can show to be the identity map.
\end{remarks}

\section{Classification of $1$- and $2$-step IHOEs 
in positive characteristic}
\label{xxsec5}

For the rest of the paper we would like to take the 
first steps in a project to classify the Hopf 
algebra domains of Gel'fand-Kirillov dimension at most 
two in positive characteristic. In characteristic 0 
considerable progress has already been made towards 
this classification, as we shall briefly recall at 
the start of $\S$\ref{xxsec5.2}. First, we deal with 
the well-known case of Gel'fand-Kirillov dimension one. 

\subsection{Hopf domains of GKdimension one}
\label{xxsec5.1} 
Recall that the only connected algebraic groups 
of dimension 1 over an algebraically closed 
field $\Bbbk$ are the additive and multiplicative 
groups of $\Bbbk$ \cite[Theorem 20.5]{Hum}. It 
is easy to remove the hypothesis of commutativity 
from this result, as follows. Let $\Bbbk^{\times}$
denote $\Bbbk \setminus \{0\}$.

\begin{lemma}
\label{xxlem5.1} 
Let $\Bbbk$ be any algebraically closed field. 
The only affine Hopf $\Bbbk$-algebra domains 
of Gel'fand-Kirillov dimension one are the 
coordinate rings $\Bbbk [X]$ and 
$\Bbbk [X^{\pm 1}]$ of $(\Bbbk, +)$ and 
$(\Bbbk^{\times}, \times)$ respectively. In 
particular, the only $1$-step IHOE over 
$\Bbbk$ is $\Bbbk [X]$ with $X$ primitive.
\end{lemma}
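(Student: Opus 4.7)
The statement has two parts: the classification of all affine Hopf $\Bbbk$-algebra domains of Gel'fand-Kirillov dimension one, and its specialization to 1-step IHOEs. Almost all the work is in the first part, and the key move is to reduce to the commutative case; once commutativity is in hand, the classification of one-dimensional connected affine algebraic groups finishes the job. The second sentence then follows as an easy corollary of the first.

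To prove commutativity of $H$, the idea is to combine classical PI-theory with Tsen's theorem, an argument that is insensitive to the characteristic of $\Bbbk$. Since $H$ is affine of $\GKdim$ one, the Small--Stafford--Warfield theorem guarantees that $H$ satisfies a polynomial identity. Being a prime noetherian PI domain, by Posner's theorem $H$ embeds in its total quotient ring $Q(H)$, which is a central simple algebra finite-dimensional over its center $Q(Z(H))$. Standard arguments show $Z(H)$ is an affine commutative domain with $\GKdim Z(H)=1$, so $Q(Z(H))$ is a field of transcendence degree one over the algebraically closed field $\Bbbk$. Tsen's theorem then tells us that $\mathrm{Br}(Q(Z(H)))=0$, so $Q(H) \cong M_n(Q(Z(H)))$ for some $n$; since $H$, and hence $Q(H)$, is a domain, $n=1$ and $Q(H)=Q(Z(H))$ is commutative. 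Therefore $H\subseteq Q(H)$ is commutative.

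With commutativity established, $\mathrm{Spec}(H)$ is an irreducible, hence connected, one-dimensional affine algebraic group over the algebraically closed field $\Bbbk$. By the cited classification \cite[Theorem 20.5]{Hum}, it is isomorphic either to $(\Bbbk,+)$ or to $(\Bbbk^{\times},\times)$, giving the two coordinate rings in the statement (with $X$ primitive in the first case and grouplike in the second). For the final sentence, a $1$-step IHOE necessarily has $H_{(0)}=\Bbbk$, on which the only admissible $\sigma_1$ and $\delta_1$ are trivial, so $H=\Bbbk[X_1]$; this rules out the Laurent polynomial case. Proposition \ref{xxpro1.2}(2) gives
\[
\Delta(X_1)=1\otimes X_1+X_1\otimes 1+c\,(1\otimes 1)
\]
for some $c\in\Bbbk$, and the counit axiom together with a shift $X_1\mapsto X_1+c$ forces $X_1$ to be primitive after the permitted change of variable.

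The main obstacle I anticipate is the commutativity step; once Tsen's theorem is recognized as the right tool, everything else is essentially bookkeeping.
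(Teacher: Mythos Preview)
Your argument is correct and reaches the same conclusion as the paper, but by a genuinely different route to the key step of commutativity. The paper simply cites \cite[Corollary 7.8 (a)$\Rightarrow$(b)]{LWZ}, a Hopf-algebraic result that directly yields commutativity of an affine Hopf domain of $\GKdim$ one, and then invokes \cite[Theorem 20.5]{Hum}. You instead run a classical PI-theoretic argument: Small--Stafford--Warfield gives PI, Posner gives a central simple quotient ring over $Q(Z(H))$, a transcendence-degree count forces $\mathrm{trdeg}_{\Bbbk}Q(Z(H))=1$, and Tsen's theorem kills the Brauer group, so $Q(H)$ is a field and $H$ is commutative. Your approach is more self-contained and uses nothing specifically Hopf-theoretic for the commutativity step; the paper's approach is a one-line citation but depends on the machinery of homological integrals. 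Two small points: Posner's theorem does not require noetherianity, so your unjustified ``noetherian'' adjective is harmless but should be dropped; and for the Tsen step you only need $\mathrm{trdeg}_{\Bbbk}Q(Z(H))\le 1$, which follows directly from $\GKdim H = 1$ without needing $Z(H)$ affine, so the ``standard arguments'' you allude to can be streamlined. Your treatment of the $1$-step IHOE sentence is more explicit than the paper's (which simply says ``In particular'') and is fine.
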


\begin{proof}
By \cite[Corollary 7.8 (a)$\Rightarrow$(b)]{LWZ},
$H$ is commutative. Therefore $H$ is the coordinate 
ring of an affine connected algebraic group over 
$\Bbbk$, and the result follows from \cite[Theorem 20.5]{Hum}.
\end{proof}

By \cite[Proposition 2.1]{GZ1}, in characteristic 0 
the hypothesis that the Hopf algebra is affine can 
be removed from Lemma \ref{xxlem5.1}, at the cost of 
adding the group algebras of non-cyclic subgroups of 
$\mathbb{Q}$ to the list. But the proof of 
\cite[Proposition 2.1]{GZ1} does not work in positive 
characteristic. We refer to a recent survey paper 
\cite{BZ2} for some facts about the prime Hopf algebras
of GKdimension one and two. 

\subsection{Hopf domains of GKdimension two}
\label{xxsec5.2}
When $\Bbbk$ has characteristic 0 the classification 
of affine Hopf $\Bbbk$-algebra domains $H$ of 
Gel'fand-Kirillov dimension 2 was achieved in 
\cite{GZ1,GZ2} with the imposition of the extra 
hypothesis that $\mathrm{Ext}^1_H (\Bbbk,\Bbbk) \neq 0$ 
(or equivalently, that the quantum group $H$ contains 
a non-trivial classical subgroup). Then, in \cite{WZZ1}, 
a family of dimension two $\Bbbk$-affine Hopf PI 
domains is constructed which fail to satisfy the 
extra hypothesis, but it is not yet known whether, 
with the addition of this family, the list is 
complete. Staying with $\Bbbk$ of characteristic 0, 
if one restricts attention to \emph{connected} Hopf 
$\Bbbk$-algebras of finite Gel'fand-Kirillov dimension, 
then they are all affine by 
\cite[Theorem 6.9]{Zh}, and the classification is 
complete up dimension at most four \cite{Zh, WZZ3}. 
In all these classification results in characteristic 
0 the outcome takes a similar form - namely, there 
is a finite number of families in each list, with 
each family being given by a finite set of 
discrete or continuously varying parameters.

Turning now to positive characteristic, and restricting 
attention to $2$-step IHOEs, we find that even in this 
very confined setting the situation is completely different 
from that pertaining in characteristic 0. This is shown 
by the main result of this 
subsection, Proposition \ref{xxpro5.6}, which lists all 
the two-step IHOEs in characteristic $p$. Together with 
Proposition \ref{xxpro5.11} in $\S$\ref{xxsec5.3}, 
which describes the isomorphisms and automorphisms 
between these algebras, this classifies 2-step 
IHOEs in positive characteristic. It transpires that 
their description entails an infinite dimensional space 
of parameters. The contrast with characteristic 0 could 
not be starker - when $\Bbbk$ has characteristic 0, a 
result of Zhuang \cite[Proposition 7.4(III)]{Zh} shows 
that there are only two connected Hopf $\Bbbk$-algebras 
of Gel'fand-Kirillov dimension 2, namely the enveloping 
algebras of the two 2-dimensional Lie algebras, both of 
which are obviously IHOEs. Conversely, by 
\cite[Theorem 1.3]{BOZZ}, every IHOE is connected, so 
that \cite[Proposition 7.4(III)]{Zh} provides a 
list of the (two) 2-step IHOEs in characteristic 0.

To understand 2-step IHOEs $\Bbbk[X_1][X_2;\sigma,\delta]$ 
in positive characteristic 
we employ the primitive cohomology studied in \cite{WZZ2}, 
from which we first recall some definitions. Let 
$(C,\Delta)$ be a coalgebra with a fixed grouplike element 
$1_C$. Let $T(C)$ be the tensor algebra over the vector 
space $C$ in cohomological degree 1 with differential 
$\partial$ determined by
\begin{equation}
\label{E5.1.1}\tag{E5.1.1}
\partial (x)=-1_C\otimes x+\Delta(x)-x\otimes 1_C
\in C\otimes C
\end{equation}
for all $x\in C$. Then $\partial$ can uniquely be extended 
to a derivation of $T(C)$ such that $(T(C),\partial)$ is a 
differential graded algebra. Let $B^i(C)$ be the image of 
$\partial^{n-1}: 
C^{\otimes (n-1)} \to C^{\otimes n}$ and $Z^n(C)$ be the 
kernel of $\partial^n: C^{\otimes n}\to C^{\otimes(n+1)}$.  
By \cite[Definition 1.2(1)]{WZZ2},
the $n$th primitive cohomology of $C$ (associated to $g=h=1_C$)
is defined to be 
$$\PP^n_{1_C,1_C}(C)=H^n(T(C),\partial)
=\ker \partial^n/\im \partial^{n-1} = Z^n(C)/B^n(C).$$

If $C$ is 
${\mathbb N}$-graded locally finite, then $T(C)$ is 
${\mathbb Z}^2$-graded and locally finite. As a consequence, 
each $\PP^n_{1_C,1_C}(C)$ is ${\mathbb N}$-graded and 
locally finite. We will use following lemma in the 
computation of primitive cohomology.

\begin{lemma}
\label{xxlem5.2} 
Let $C$ be a connected ${\mathbb N}$-graded
coalgebra and let $A$ be the graded dual algebra of $C$, 
namely, $A_i=\Hom_{\Bbbk}(C_i,\Bbbk)$ for all $i\geq 0$. Then 
$A$ is a connected graded algebra with trivial graded module
$\Bbbk$ and 
$$\dim \PP^n_{1_C,1_C}(C)_i=\dim \Ext^n_{A}(\Bbbk,\Bbbk)_i$$
for all $n$ and $i$.
\end{lemma}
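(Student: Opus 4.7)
The plan is to identify $(T(C),\partial)$ with the cobar complex of the coaugmented coalgebra $(C, 1_C)$, reduce to its normalized form, and then apply graded bar--cobar duality to pass to the bar complex of the graded dual algebra $A$ computing $\Ext^n_A(\Bbbk,\Bbbk)$.

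First, I would set up the duality. Since $C$ is connected $\mathbb{N}$-graded (with each $C_i$ locally finite-dimensional in order for the graded dual to be an honest algebra), we have $C_0 = \Bbbk\cdot 1_C$, so the graded dual $A = \bigoplus_i C_i^{*}$ is a connected graded algebra whose augmentation $A \twoheadrightarrow A_0 = \Bbbk$ is $\Bbbk$-linear dual to the coaugmentation $1_C \colon \Bbbk \hookrightarrow C$. This identifies the trivial $A$-module. Let $\bar{C} := \bigoplus_{i\geq 1} C_i = \ker \epsilon$ be the augmentation ideal of $C$. A direct check using the counit axiom shows that $\partial(x) = \bar{\Delta}(x) \in \bar{C} \otimes \bar{C}$ for $x \in \bar{C}$, where $\bar{\Delta}$ is the reduced coproduct, so the tensor subalgebra $T(\bar{C}) \subset T(C)$ is in fact a subcomplex on which $\partial$ restricts to the standard reduced cobar differential $\bar{\partial}$.

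Second, I would verify that the inclusion $(T(\bar{C}), \bar{\partial}) \hookrightarrow (T(C), \partial)$ is a quasi-isomorphism in each internal degree. This is the standard normalization result for the cobar construction of a coaugmented coalgebra and is proved by a contracting-homotopy argument that kills any tensor word containing a $1_C$ factor, using that $1_C$ is grouplike and $\epsilon(1_C)=1$; this homotopy preserves the internal grading because $1_C \in C_0$. Granted this, $\PP^n_{1_C,1_C}(C)_i \cong H^n(T(\bar{C}), \bar{\partial})_i$ for all $n$ and $i$.

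Third, I would dualize piece-by-piece in the internal grading. Each component $(\bar{C}^{\otimes n})_i$ is finite-dimensional, being supported on the finite set of compositions $(i_1,\ldots,i_n)$ of $i$ with $i_j \geq 1$, and its $\Bbbk$-linear dual is canonically $(\bar{A}^{\otimes n})_i$. Dualizing the reduced cobar differential term-by-term yields the reduced bar differential on $T(\bar{A})$, and the latter complex computes $\Ext^n_A(\Bbbk,\Bbbk)$ with the internal grading inherited from $A$. Since linear duality preserves dimensions on finite-dimensional vector spaces and commutes with taking cohomology, one concludes $\dim \PP^n_{1_C,1_C}(C)_i = \dim \Ext^n_A(\Bbbk,\Bbbk)_i$, as claimed. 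The main technical point is the quasi-isomorphism in the second step; alternatively one can bypass the unreduced complex by citing the analogous statement for normalized cobar cohomology from \cite{WZZ2}.
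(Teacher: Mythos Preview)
Your argument is correct: the identification of $(T(C),\partial)$ with the unreduced cobar complex, the normalization step passing to $T(\bar C)$, and the graded-dual passage to the bar complex of $A$ computing $\Ext^*_A(\Bbbk,\Bbbk)$ are all standard and sound in the locally finite graded setting. The paper's proof is essentially a one-line citation: it identifies $\dim\Ext^n_A(\Bbbk,\Bbbk)_i$ with $\dim\Tor_n^A(\Bbbk,\Bbbk)_{-i}$ and then invokes \cite[Lemma~3.6(2)]{WZZ2}, which packages exactly the bar--cobar duality you have spelled out. So your route and the paper's are the same in substance; you have simply unpacked the cited lemma (and routed through $\Ext$ directly rather than via $\Tor$), as your own closing remark already anticipates.
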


\begin{proof} 
After we identify $\dim \Ext^n_{A}(\Bbbk,\Bbbk)_i$ with
$\dim \Tor_n^{A}(\Bbbk,\Bbbk)_{-i}$ for all $i$, the assertion
is a consequence of \cite[Lemma 3.6(2)]{WZZ2}.
\end{proof}

For the rest of this section we assume that $\Bbbk$ has
positive characteristic $p$. We are now ready to 
compute the primitive cohomology of the coalgebra 
$\Bbbk[X_1]$ from Lemma \ref{xxlem5.1}. The divided 
power Hopf algebra (of one variable) was introduced 
in \cite{Sw}, also see
\cite[Example 5 in Sect. 4.3]{DNR}. By definition, the 
divided power Hopf algebra ${\mathcal T}$ is a $\Bbbk$-vector 
space with basis $\{t_n\}_{n\geq 0}$ and with its 
bialgebra structure determined by
\begin{equation}
\label{E5.2.1}\tag{E5.2.1}
\Delta (t_n)=\sum_{i=0}^{n} t_i\otimes t_{n-i} ,
\quad
\epsilon(t_n)=\delta_{0,n}
\quad {\textit{and}}\quad
t_n t_m={n+m\choose n} t_{n+m}
\end{equation}
for all $n,m\geq 0$. By comparing the structure 
coefficients of the multiplications and comultiplications 
of ${\mathcal T}$ and $\Bbbk[X_1]$ (as in Lemma 
\ref{xxlem5.1}) respectively, one can easily see that 
${\mathcal T}$ is the graded $\Bbbk$-linear dual of 
$\Bbbk[X_1]$, giving part (1) of Proposition \ref{xxpro5.4}.

\begin{convention}
\label{xxcon5.3}
Since we are using various different algebra and/or 
coalgebra structures on the same or similar spaces, 
it is convenient to fix some notation.
\begin{enumerate}
\item[(1)]
Let $A$ denote the algebra ${\mathcal T}$ obtained by forgetting 
the coalgebra structure, namely, $A=\bigoplus_{d\geq 0} \Bbbk t_d$ 
is the divided power algebra of one variable with multiplication 
determined by $t_d t_e={d+e\choose d} t_{d+e}$ for all $d,e\geq 0$.
\item[(2)]
Let $C$ be the graded coalgebra $\Bbbk[X_1]$ given in Lemma 
\ref{xxlem5.1} by forgetting its algebra structure.
\end{enumerate}
\end{convention}

\begin{proposition}
\label{xxpro5.4} Retain the above notation.
\begin{enumerate}
\item[(1)]
$A$ is isomorphic to the graded dual algebra of the coalgebra 
$C$. 
\item[(2)]
As a graded algebra $A$ is generated by $\{t_{p^s}\mid s\geq 0\}$
subject to the relations 
$$(t_{p^s})^p=0, \qquad {\text{and}}\qquad 
t_{p^s} t_{p^t}=t_{p^t} t_{p^s}$$
for all $s<t$. As a consequence,
$$\dim \Ext^1_A(\Bbbk, \Bbbk)_i=\begin{cases} 1 & i= p^s 
\;{\text{ for all $s\geq 0$}}\\
0& {\text{otherwise}}\end{cases}$$
and
$$\dim \Ext^2_A(\Bbbk, \Bbbk)_i=\begin{cases} 1 & i= p^{s+1} 
\;{\text{ for all $s\geq 0$}}\\
1 & i= p^{s}+p^{t} \;{\text{ for all $0\leq s<t$}}\\
0& {\text{otherwise.}}\end{cases}$$
\item[(3)]
Consider $C$ as a graded coalgebra with $1_C$ being the 
identity of $\Bbbk[X_1]$. Then 
$$\dim \PP^1_{1_C, 1_C}(C)_i=\begin{cases} 1 & i= p^s 
\;{\text{ for all $s\geq 0$}}\\
0& {\text{otherwise}}\end{cases}$$
and
$$\dim \PP^2_{1_C, 1_C}(C)_i=\begin{cases} 1 & i= p^{s+1} 
\;{\text{ for all $s\geq 0$}}\\
1 & i= p^{s}+p^{t} \;{\text{ for all $0\leq s<t$}}\\
0& {\text{otherwise.}}\end{cases}$$
\item[(4)]
The following elements in $C\otimes C$ generate a $\Bbbk$-linear basis of
$\PP^2_{1_C, 1_C}(C)$:
\begin{equation}
\label{E5.4.1}\tag{E5.4.1}
Z_s:=\sum_{i=1}^{p-1} \frac{(p-1)!}{i! (p-i)!} 
(X_1^{p^s})^{i}\otimes (X_1^{p^s})^{p-i}
\end{equation}
for each $s\geq 0$, and
\begin{equation}
\label{E5.4.2}\tag{E5.4.2}
Y_{s,t}:=X_1^{p^s}\otimes X_1^{p^t}- X_1^{p^t}\otimes X_1^{p^s}
\end{equation}
for all $s<t$.
\end{enumerate}
\end{proposition}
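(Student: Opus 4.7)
My plan is to establish Part (1) by a direct dualization, do the main computational work in Part (2) using Lucas's theorem on binomial coefficients mod $p$, deduce Part (3) immediately from Lemma \ref{xxlem5.2}, and finally in Part (4) verify that the proposed elements are nonzero cocycles in the appropriate graded pieces. Part (1) is instant: since $X_1$ is primitive, $\Delta(X_1^n) = \sum_{i=0}^n \binom{n}{i} X_1^i \otimes X_1^{n-i}$, and dualizing the basis $\{X_1^n\}$ produces exactly the divided-power multiplication $t_d t_e = \binom{d+e}{d} t_{d+e}$ defining $A$.

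For Part (2), a short induction using Lucas gives $t_{p^s}^k = k!\, t_{k p^s}$ for $1 \leq k \leq p-1$, and then $t_{p^s}^p = (p-1)!\,\binom{p^{s+1}}{p^s}\, t_{p^{s+1}} = 0$ because $\binom{p^{s+1}}{p^s} \equiv 0 \pmod p$. For general $n$ with base-$p$ expansion $n = \sum_s a_s p^s$, iterated application of Lucas yields $\prod_s t_{p^s}^{a_s} = \bigl(\prod_s a_s!\bigr)\, t_n$, so $\{t_{p^s}\}_{s \geq 0}$ generates $A$; a Hilbert series comparison confirms that the listed relations are complete. Thus $A \cong \bigotimes_{s \geq 0} \Bbbk[u_s]/(u_s^p)$ as graded algebras, with $\deg u_s = p^s$. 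Over each factor $\Bbbk[u]/(u^p)$ the minimal graded free resolution of $\Bbbk$ is periodic with differentials alternating $\cdot u$ and $\cdot u^{p-1}$; reading off shifts shows $\Ext^1$ is $1$-dimensional in internal degree $p^s$ and $\Ext^2$ is $1$-dimensional in internal degree $p^{s+1}$. The K\"unneth formula for $\Ext$ over this commutative augmented tensor product then gives the stated dimensions: the $\Ext^2$ contribution from a single factor lives in internal degree $p^{s+1}$, while the $\Ext^1\otimes\Ext^1$ contribution from distinct factors $s<t$ lives in internal degree $p^s + p^t$.

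Part (3) follows at once from Part (2) and Lemma \ref{xxlem5.2}. For Part (4), since each graded piece of $\PP^2_{1_C,1_C}(C)$ is at most one-dimensional and the degrees $\{p^{s+1}\}$ and $\{p^s + p^t : s < t\}$ are pairwise distinct by uniqueness of the base-$p$ expansion, it suffices to exhibit each $Z_s$ and each $Y_{s,t}$ as a nonzero class. Another application of Lucas shows $X_1^{p^s}$ is primitive for every $s$, whence $\partial(Y_{s,t}) = 0$ trivially; also $\partial(X_1^{p^s + p^t}) = X_1^{p^s} \otimes X_1^{p^t} + X_1^{p^t} \otimes X_1^{p^s}$, so $B^2$ in degree $p^s + p^t$ is spanned by a symmetric tensor while $Y_{s,t}$ is antisymmetric. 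For $Z_s$, setting $y = X_1^{p^s}$ and expanding
\[
\partial(Z_s) = \sum_{i=1}^{p-1} \frac{(p-1)!}{i!(p-i)!}\bigl(\partial(y^i)\otimes y^{p-i} - y^i \otimes \partial(y^{p-i})\bigr),
\]
collecting the coefficient of each $y^a \otimes y^b \otimes y^c$ (with $a,b,c \geq 1$, $a+b+c = p$) yields the trinomial $(p-1)!/(a!\,b!\,c!)$ from each side, which cancel; hence $Z_s$ is a cocycle, and nontriviality follows because $\partial(X_1^{p^{s+1}}) = 0$ (primitivity) forces $B^2 = 0$ in degree $p^{s+1}$. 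The main technical obstacle is the bookkeeping with Lucas's theorem in Part (2), in particular verifying that the listed relations form a complete set of defining relations for $A$; once that is in place the K\"unneth and minimal-resolution arguments proceed routinely.
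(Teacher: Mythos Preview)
Your proof is correct and follows the same overall plan as the paper's: Part~(1) by direct dualization, Part~(3) via Lemma~\ref{xxlem5.2}, and Part~(4) by direct verification that the listed elements are nonzero cocycles. The only methodological difference is in Part~(2): the paper simply invokes the standard fact that for a connected graded algebra $\Ext^1_A(\Bbbk,\Bbbk)$ and $\Ext^2_A(\Bbbk,\Bbbk)$ record the degrees of a minimal generating set and a minimal set of relations, and reads the answer straight off the stated presentation; you instead make this explicit via the tensor-product decomposition $A\cong\bigotimes_{s\geq 0}\Bbbk[u_s]/(u_s^p)$, the periodic resolution of each factor, and K\"unneth. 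These are equivalent---your K\"unneth splitting of $\Ext^2$ into single-factor and two-factor contributions is exactly the separation into the $p$th-power relations and the commutativity relations---but your version is more self-contained and supplies the Lucas-theorem and Hilbert-series details the paper leaves implicit. One caveat worth flagging: your non-triviality argument for $Y_{s,t}$ (symmetric coboundary versus antisymmetric cocycle) fails when $p=2$, where in fact $Y_{s,t}=\partial(X_1^{2^s+2^t})$; this is a wrinkle in the statement of Part~(4) for $p=2$ rather than a gap in your method.
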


\begin{proof} 
(2) The assertion follow from the fact that $\Ext^1_A(\Bbbk,\Bbbk)$ can 
be identified with a minimal set of generators and $\Ext^2_A(\Bbbk,\Bbbk)$ 
can be identified with a minimal set of relations.

\noindent (3) This follows from Lemma \ref{xxlem5.2} and parts (1,2).

\noindent (4) By part (3), it suffices to show that elements 
$Z_s$ and $Y_{s,t}$ are in $Z^2(C)$ but not in $B^2(C)$, which 
can be verified by some straightforward computations.
\end{proof}

In the next lemma $Z_s$ and $Y_{s,t}$ are as given in 
\eqref{E5.4.1}-\eqref{E5.4.2}.

\begin{lemma}
\label{xxlem5.5}
Let $H$ be a 2-step IHOE generated by $X_1$ and $X_2$ as in 
Definition \ref{xxdef1.1}(2); that is, 
$H=\Bbbk[X_1][X_2;\sigma,\delta]$. Retain the notation 
introduced in Proposition \ref{xxpro5.4}(4). Then the 
following hold.
\begin{enumerate}
\item[(1)]
$$\Delta(X_1)=X_1\otimes 1+1\otimes X_1$$
and
$$\Delta(X_2)=X_2\otimes 1+1\otimes X_2+ w$$
where
\begin{equation}
\label{E5.5.1}\tag{E5.5.1}
w=\sum_{s\geq 0} b_s Z_s+\sum_{s<t} c_{s,t} Y_{s,t}
\end{equation}
for some scalars $b_s, c_{s,t}\in \Bbbk$.
\item[(2)]
$\sigma(X_1)=X_1+\chi$ where $\chi\in \Bbbk$.
\item[(3)]
$\delta$ is a $\sigma$-derivation of $\Bbbk[X_1]$ such that
$$\Delta(\delta(X_1))
=\delta(X_1)\otimes 1+1\otimes \delta(X_1)-\chi w$$
where $w$ is given in \eqref{E5.5.1}.
\item[(4)]
$\chi w=0$. 
\item[(5)]
$\delta(X_1)=\sum_{s\geq 0} d_s X_1^{p^s}$
for some $d_s\in \Bbbk$.
\end{enumerate}
\end{lemma}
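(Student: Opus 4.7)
The overall strategy is to exploit the primitive cohomology machinery of Proposition \ref{xxpro5.4} together with the freedom to change variables in the Ore presentation. The key idea for parts (1) and (4) is that the cocycle condition on $w$ coming from coassociativity, combined with the basis description of $\PP^2_{1_C,1_C}(C)$, pins down the shape of $\Delta(X_2)$ very tightly.

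For part (1), Lemma \ref{xxlem5.1} identifies $H_{(1)} = \Bbbk[X_1]$ with $X_1$ primitive, giving the first comultiplication formula. For $X_2$, Proposition \ref{xxpro1.2}(2) already provides $\Delta(X_2) = X_2 \otimes 1 + 1 \otimes X_2 + w$ for some $w \in \Bbbk[X_1] \otimes \Bbbk[X_1]$. The coassociativity equation $(\Delta \otimes \id)\Delta(X_2) = (\id \otimes \Delta)\Delta(X_2)$ translates directly into $\partial(w) = 0$, so $w \in Z^2(C)$. The admissible change of variable $X_2 \mapsto X_2 + f(X_1)$ with $f(X_1) \in \Bbbk[X_1] \cap \ker\epsilon$ preserves the filtration $H_{(0)} \subset H_{(1)} \subset H$ and alters $w$ by the coboundary $\partial f(X_1)$, so only the class $[w] \in \PP^2_{1_C,1_C}(C)$ is intrinsic. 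Since the elements $\{Z_s\}$ and $\{Y_{s,t}\}$ project to a $\Bbbk$-basis of $\PP^2_{1_C,1_C}(C)$ by Proposition \ref{xxpro5.4}(4), a suitable choice of $f$ puts $w$ into the form \eqref{E5.5.1}.

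Parts (2) and (3) are computational. For (2), Proposition \ref{xxpro1.2}(4) presents $\sigma$ as a winding automorphism $\tau^r_\chi$ for a character $\chi$ of $\Bbbk[X_1]$; since $X_1$ is primitive, $\tau^r_\chi(X_1) = X_1 \cdot \chi(1) + 1 \cdot \chi(X_1) = X_1 + \chi(X_1)$, and we set $\chi := \chi(X_1)$. For (3), apply $\Delta$ to the Ore relation $X_2 X_1 = (X_1 + \chi)X_2 + \delta(X_1)$, using the formulas established in (1) and (2). The decisive observation is that $X_1 \otimes 1 + 1 \otimes X_1 = \Delta(X_1)$ commutes with every element of $\Bbbk[X_1] \otimes \Bbbk[X_1]$, in particular with $w$; this makes all commutator terms involving $w$ collapse, leaving exactly $\Delta(\delta(X_1)) = \delta(X_1) \otimes 1 + 1 \otimes \delta(X_1) - \chi w$.

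For (4) and (5), rewrite the identity from (3) as $\chi w = -\partial(\delta(X_1))$, showing $\chi w \in B^2(C)$. If $\chi = 0$ the conclusion is trivial; if $\chi \neq 0$, then $w \in B^2(C)$, but the choice of $w$ in (1) places $w$ in the fixed complement of $B^2$ spanned by $\{Z_s\}$ and $\{Y_{s,t}\}$, whose images form a basis of $\PP^2_{1_C,1_C}(C)$. Hence $w = 0$, and again $\chi w = 0$. For (5), combining (3) and (4) yields $\Delta(\delta(X_1)) = \delta(X_1) \otimes 1 + 1 \otimes \delta(X_1)$, so $\delta(X_1) \in Z^1(C) = \PP^1_{1_C,1_C}(C)$. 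By Proposition \ref{xxpro5.4}(3) this space is spanned by $\{X_1^{p^s}\}_{s \geq 0}$, each primitive via the characteristic-$p$ Frobenius identity $(X_1 \otimes 1 + 1 \otimes X_1)^{p^s} = X_1^{p^s} \otimes 1 + 1 \otimes X_1^{p^s}$, yielding the expansion.

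The delicate point is (4): one must carefully distinguish between the abstract cohomology class $[w]$ and the specific representative $w$ chosen in (1). The dichotomy argument works only because $w$ was fixed \emph{inside a chosen complement} of $B^2$, so that $w \in B^2$ forces $w = 0$ rather than merely $[w] = 0$. Everything else is essentially bookkeeping once the cohomological setup of Proposition \ref{xxpro5.4} is in place.
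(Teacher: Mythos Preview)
Your proof is correct and follows essentially the same approach as the paper's: the cohomological identification of $w$ via Proposition~\ref{xxpro5.4}(4), the winding-automorphism form of $\sigma$, and the key observation that $\chi w$ lies in $B^2(C)$ while $w$ sits in a complement of $B^2(C)$, forcing $\chi w = 0$. The only cosmetic difference is that for part~(3) you carry out the direct computation from the Ore relation, whereas the paper cites \cite[Theorem~2.4(i)(d)]{BOZZ}; your explicit verification is a fine substitute.
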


\begin{proof} Recall that $C$ is the coalgebra $\Bbbk[X_1]$ and 
$1_C$ is the identity $1$ in $\Bbbk[X_1]$.

\noindent 
(1) Let $w$ be $\partial(X_2):=\Delta(X_2)-(X_2\otimes 1+1\otimes X_2)$.
By \cite[Theorem 1.3(iii)]{Hua} or \cite[Theorem 2.4(i)(f)]{BOZZ}, 
$w\in \Bbbk[X_1]\otimes \Bbbk[X_1]$ and 
$$w\otimes 1+(\Delta\otimes \mathrm{Id})(w)
=1\otimes w+(\mathrm{Id}\otimes \Delta)(w).$$
This means that $w$ is a 2-cocycle in $Z^2(C)$. Up to a change 
of variable $X_2$, one can assume that $w$ is an element in
$\PP^2_{1_C,1_C}(C)$. By Proposition \ref{xxpro5.4}(3,4), we 
can assume that $w$ is of the form \eqref{E5.5.1}.

\noindent 
(2) This follows from \cite[Theorem 2.4(i)(d)]{BOZZ}.

\noindent 
(3) This follows from the fact that $\Bbbk[X_1]$ is 
commutative and by taking $r = X_1$ in 
\cite[Theorem 2.4(i)(d)]{BOZZ}.

\noindent 
(4) Let $\partial$ be the differential of the differential 
graded algebra $T(C)$ as in \eqref{E5.1.1}, so $\partial$ 
is determined by
$$\partial(f)=\Delta(f)-f\otimes 1-1\otimes f$$ 
for all $f\in C=\Bbbk[X_1]$. By part (3), $\partial(\delta(X_1))
= -\chi w$. This means that $\chi w=0$ in $\PP^2_{1_C,1_C}(C)$.
By \eqref{E5.5.1} and the fact that $\{Z_s\}_{s\geq 0}\cup 
\{Y_{s,t}\}_{(s<t)}$ form a basis of  $\PP^2_{1_C,1_C}(C)$, 
it follows that $\chi w=0$.

\noindent 
(5) By parts (3) and (4), $\delta(X_1)$ is primitive. It is 
well-known that every primitive element in $\Bbbk[X_1]$ is 
of the form $\sum_{s\geq 0} d_s X_1^{p^s}$ for some 
$d_s\in \Bbbk$. This is also a consequence of Proposition 
\ref{xxpro5.4}(3).
\end{proof}

Recall now the construction of 2-step IHOEs from the 
introduction. Namely, let ${\bf d_s} = \{d_s\}_{s\geq 0}$, 
${\bf b_s} = \{b_s\}_{s\geq 0}$ and 
${\bf c_{s,t}} = \{c_{s,t}\}_{0\leq s<t}$ be sequences of 
scalars in $\Bbbk$ with only finitely many nonzero elements. 
Let $H({\bf d_s,b_s,c_{s,t}})$ denote the Ore extension 
$\Bbbk[X_1][X_2; \mathrm{Id},\delta]$, where 
$$\delta(X_1)=\sum_{s\geq 0} d_s X_1^{p^s}.$$ 
Moreover, the comultiplication $\Delta: H({\bf d_s,b_s,c_{s,t}})
\to H({\bf d_s,b_s,c_{s,t}}) \otimes H({\bf d_s,b_s,c_{s,t}})$ 
is determined by
$$\begin{aligned}
\Delta(X_1)&=X_1\otimes 1+1\otimes X_1,\\
\Delta(X_2)&=X_2\otimes 1+1\otimes X_2+ w,
\end{aligned}$$
where 
\begin{align}
\notag
w&=\sum_{s\geq 0} b_s 
  \left(\sum_{i=1}^{p-1} \frac{(p-1)!}{i! (p-i)!} (X_1^{p^s})^{i}
  \otimes (X_1^{p^s})^{p-i}\right)\\
	\notag
&\qquad \qquad+\sum_{0 \leq s<t} c_{s,t} \left(X_1^{p^s}\otimes 
  X_1^{p^t}- X_1^{p^t}\otimes X_1^{p^s}\right).
\end{align}
Similarly, define maps $\epsilon$ and $S$ from $\{X_1,X_2\}$ 
to $\Bbbk$ and $H({\bf d_s,b_s,c_{s,t}})$ respectively, by
$$\begin{aligned} \epsilon(X_1)&=\epsilon(X_2)=0,\\
S(X_1)&=-X_1,\\
S(X_2)&=-X_2-m(\mathrm{Id} \otimes S)(w).
\end{aligned}$$

Now we are ready to prove Theorem \ref{xxthm0.3}(1),(2).

\begin{proposition}
\label{xxpro5.6} Retain the above definitions and notation.
\begin{enumerate}
\item[(1)] 
The definitions above of $\Delta$, $\epsilon$ and $S$ 
extend uniquely to $H({\bf d_s,b_s,c_{s,t}})$ so that it 
is a Hopf algebra.
\item[(2)] 
Let $H:=\Bbbk[X_1][X_2;\sigma,\delta]$ be a 2-step IHOE 
generated by $X_1$ and $X_2$ as in Definition 
\ref{xxdef1.1}(2). Then $H$ is isomorphic to 
$H({\bf d_s,b_s,c_{s,t}})$, for a suitable choice of scalars. 
\end{enumerate}
\end{proposition}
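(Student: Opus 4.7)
The strategy is direct verification of the Hopf axioms on generators for part (1), followed by an application of Lemma \ref{xxlem5.5} and a change-of-variable argument for part (2).

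For (1), uniqueness of the extensions is automatic since $\Delta$, $\epsilon$, and $S$ are determined on the generators $X_1,X_2$. For existence, the only relation in $H({\bf d_s,b_s,c_{s,t}})$ is $[X_2,X_1]=\delta(X_1)$, so I would verify that $\Delta$ extends to an algebra map by computing $[\Delta(X_2),\Delta(X_1)]$: the terms involving $w$ vanish because $w\in\Bbbk[X_1]\otimes\Bbbk[X_1]$ commutes with $\Delta(X_1)=X_1\otimes 1+1\otimes X_1$, while the surviving terms produce $\delta(X_1)\otimes 1+1\otimes\delta(X_1)$, which equals $\Delta(\delta(X_1))$ since each $X_1^{p^s}$ is primitive in characteristic $p$. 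Coassociativity, applied to $X_2$, reduces to the 2-cocycle condition for $w$, which holds because $Z_s,Y_{s,t}\in Z^2(C)$ by Proposition \ref{xxpro5.4}(4). The counit axioms are immediate. The antipode identity $\mu\circ(\mathrm{id}\otimes S)\circ\Delta(X_2)=0$ is built into the definition of $S(X_2)$; for the opposite identity one needs $m(S\otimes\mathrm{id})(w)=m(\mathrm{id}\otimes S)(w)$, which I would verify directly on each basis element $Z_s$ and $Y_{s,t}$ using elementary binomial identities modulo $p$ (notably $\sum_{i=1}^{p-1}\frac{(p-1)!}{i!(p-i)!}(-1)^i\equiv 0\pmod{p}$ for odd $p$).

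For part (2), Lemma \ref{xxlem5.5} provides scalars ${\bf d_s}, {\bf b_s}, {\bf c_{s,t}}, \chi\in\Bbbk$ together with the constraint $\chi w=0$. When $\chi=0$ one has $\sigma=\mathrm{Id}$ and the presentation of $H$ matches that of $H({\bf d_s,b_s,c_{s,t}})$ directly. When $\chi\neq 0$, the constraint forces $w=0$, so $X_2$ is primitive; I would then replace $X_2$ by $X_2':=X_2+\chi^{-1}\delta(X_1)$, which remains primitive (since $\delta(X_1)=\sum d_s X_1^{p^s}$ is a sum of primitives) and satisfies $[X_2',X_1]=\chi X_2'$. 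A swap then completes the reduction: set $Y_1:=X_2'$ and $Y_2:=-\chi^{-1}X_1$, both primitive. A short calculation gives $Y_2Y_1=Y_1Y_2+Y_1$, inducing a surjective algebra map $\Bbbk[Y_1][Y_2;\mathrm{Id},\delta'']\twoheadrightarrow H$ with $\delta''(Y_1)=Y_1$. This map is a Hopf algebra isomorphism because both algebras are affine domains of Gel'fand--Kirillov dimension two with matching PBW bases, yielding $H\cong H({\bf d_s,0,0})$ with $d_0=1$ and all other parameters zero.

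The main obstacle is the antipode verification in (1), which rests on characteristic-$p$ binomial arithmetic and requires separate treatment of $p=2$ and $p$ odd (the contribution of $Y_{s,t}$ to $m(\mathrm{id}\otimes S)(w)$ vanishes in both cases, while the $Z_s$ contributions agree on the two sides by the binomial identity above). A secondary subtlety is ensuring in the $\chi\neq 0$ case of part (2) that the swap yields a genuine iterated Hopf Ore extension, not merely a Hopf surjection onto $H$; this is secured by the Gel'fand--Kirillov dimension comparison, using that $H$ and the Ore extension built from $Y_1,Y_2$ are both domains of dimension two.
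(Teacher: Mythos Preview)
Your approach is correct and essentially parallel to the paper's. For part (1) the paper simply invokes \cite[Theorem 2.4(ii)]{BOZZ}, which packages exactly the verifications you spell out (that $\Delta$ respects the relation, that coassociativity amounts to the $2$-cocycle condition on $w$, and that the antipode axioms hold); your direct check is therefore a self-contained reproof of the relevant special case. One small omission: you verify the antipode identities on the generator $X_2$ but do not say why $S$ extends to an \emph{anti}-algebra homomorphism of $H$; either check directly that $S(X_1)S(X_2)-S(X_2)S(X_1)=S(\delta(X_1))$ (immediate, since $S(X_1)=-X_1$ commutes with $f(X_1):=m(\mathrm{id}\otimes S)(w)$ and $(-1)^{p^s}\delta(X_1)=-\delta(X_1)$ in characteristic $p$), or bypass the antipode entirely by noting that a connected bialgebra is automatically a Hopf algebra.

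For part (2) your argument is the same as the paper's: the paper first normalises $\chi$ to $0$ or $1$ and then performs exactly your substitution $\widehat{X_2}=X_2+\delta(X_1)$ followed by the swap of variables, landing on $H(\mathbf{d_s},\mathbf{0},\mathbf{0})$ with $d_0=1$. The paper does not invoke a GK-dimension comparison to justify that the swap is an isomorphism; it is cleaner to observe that $X_1$ and $X_2$ lie in $\Bbbk\langle Y_1,Y_2\rangle$, so the map is already surjective onto the original $H$, and injectivity follows because the ordered monomials in $Y_1,Y_2$ are linearly independent (they are a re-indexing of the PBW basis of $H$).
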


\begin{proof} (1) This follows from \cite[Theorem 2.4(ii)]{BOZZ}
and an easy computation (similar to the proof of 
Lemma \ref{xxlem5.5}). 

\noindent 
(2) Let $\chi$ be the scalar given in Lemma 
\ref{xxlem5.5}(2). Up to a change of variable, we can assume 
that $\chi$ is either 0 or 1. 

{\bf Case 1:} $\chi=0$.  The assertion follows from Lemma 
\ref{xxlem5.5}(1,5).

{\bf Case 2:} $\chi=1$. In this case 
$\sigma(X_1)=X_1+1$ by Lemma \ref{xxlem5.5}(2). By Lemma 
\ref{xxlem5.5}(4), $w=0$. By Lemma \ref{xxlem5.5}(1),
both $X_1$ and $X_2$ are primitive. Since $\sigma(X_1)=
X_1+1$, we have a relation
$$X_2X_1=X_1X_2+X_2 + \sum_{s\geq 0} d_s X_1^{p^s}.$$
Replacing $X_2$ by the new primitive generator 
$\widehat{X_2} := X_2 + \sum_{s\geq 0} d_s X_1^{p^s}$, 
the above relation becomes
$$\widehat{X_2}X_1=X_1\widehat{X_2}+\widehat{X_2}.$$
Exchanging $X_1$ and $\widehat{X_2}$ and changing the 
sign of $X_1$, one now sees that
$H$ is isomorphic as a Hopf algebra to $H({\bf d_s, 0,0})$
where ${\bf d_s} = \{d_0=1,\, d_s=0 :s \geq 0\}$. This 
completes the proof.
\end{proof}

We shall see in the next subsection that Proposition 
\ref{xxpro5.6} implies that there is an immense zoo 
of isomorphism classes of 2-step IHOEs in positive 
characteristic. But, in contrast to this plethora, 
the classification up to birational equivalence is 
very simple. In fact, it exactly parallels the story 
in characteristic 0, where - in the light of 
Zhuang's result \cite[Proposition 7.4(III)]{Zh}, 
there are 2 birational equivalence classes, with 
quotient division rings the field of rational functions 
$\Bbbk(X_1, X_2)$ and the quotient division ring 
$Q(A_1(\Bbbk))$ of the first Weyl algebra in the 
commutative and noncommutative cases respectively.

\begin{corollary}
\label{xxcor5.7} 
Let $\Bbbk$ be algebraically closed of positive 
characteristic and let $H$ be a 2-step IHOE over 
$\Bbbk$. If $H$ is not commutative, that is if 
$\mathbf{d_s} \neq \mathbf{0}$, then the quotient 
division ring of $H$ is isomorphic to $Q(A_1(\Bbbk))$, 
the first Weyl skew field over $\Bbbk$.
\end{corollary}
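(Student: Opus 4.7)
The plan is to reduce immediately to the normal form from Proposition \ref{xxpro5.6}, and then to find a Weyl-algebra presentation after inverting one carefully chosen element. By Proposition \ref{xxpro5.6}(2), I may assume $H=\Bbbk[X_1][X_2;\mathrm{Id},\delta]$ with $\delta(X_1)=f(X_1):=\sum_{s\geq 0}d_sX_1^{p^s}$, and the non-commutativity hypothesis ${\bf d_s}\neq {\bf 0}$ means exactly that $f(X_1)\neq 0$. (I will ignore the comultiplication data $(\mathbf{b_s},\mathbf{c_{s,t}})$, as the claim concerns only the algebra structure.) Since $\sigma=\mathrm{Id}$, the derivation $\delta$ on $\Bbbk[X_1]$ is given by $\delta=f(X_1)\,\partial_{X_1}$.

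Next, I would localize $H$ at the Ore set generated by $f(X_1)$. Because $H$ is a noetherian domain (Proposition \ref{xxpro1.2}(1)), every non-zero element is regular, and on the commutative subring $\Bbbk[X_1]$ the localization is clearly just $T:=\Bbbk[X_1,f(X_1)^{-1}]$. The derivation $\delta$ extends to $T$, and one checks (routinely, using $\delta(X_1^{-n})=-nX_1^{-n-1}\delta(X_1)$ for powers appearing in the partial-fraction decomposition of elements of $T$) that
\[
H[f(X_1)^{-1}]\;\cong\;T[X_2;\,f(X_1)\,\partial_{X_1}].
\]

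The key step is the change of variable $Y:=X_2\,f(X_1)^{-1}$, which lives in $H[f(X_1)^{-1}]$. For any $r\in T\subseteq \Bbbk[X_1,f(X_1)^{-1}]$, since $r$ commutes with $f(X_1)^{-1}$,
\[
Yr \;=\; X_2 f(X_1)^{-1}r \;=\; X_2 r\, f(X_1)^{-1} \;=\; \bigl(rX_2 + f(X_1)\partial_{X_1}(r)\bigr)f(X_1)^{-1} \;=\; rY + \partial_{X_1}(r).
\]
In particular $[Y,X_1]=1$, so the subalgebra of $H[f(X_1)^{-1}]$ generated by $X_1$ and $Y$ is a homomorphic image of the Weyl algebra $A_1(\Bbbk)$. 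Since $H[f(X_1)^{-1}]$ is an Ore extension $T[X_2;f\partial_{X_1}]$ with $T$-basis $\{X_2^n\}_{n\geq 0}$, and $X_2^n = Y^n f(X_1)^n + (\text{lower powers of }X_2)$, the set $\{Y^n\}_{n\geq 0}$ is also a free left $T$-basis; hence $\Bbbk\langle X_1,Y\rangle\subseteq H[f(X_1)^{-1}]$ is genuinely $A_1(\Bbbk)$, and
\[
H[f(X_1)^{-1}]\;\cong\;T[Y;\partial_{X_1}]\;=\;A_1(\Bbbk)\bigl[f(X_1)^{-1}\bigr].
\]

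Finally, passing to quotient division rings and using that localizing at a non-zero Ore set does not change the Goldie quotient ring of a noetherian prime ring gives
\[
Q(H)\;=\;Q\bigl(H[f(X_1)^{-1}]\bigr)\;\cong\;Q\bigl(A_1(\Bbbk)[f(X_1)^{-1}]\bigr)\;=\;Q(A_1(\Bbbk)),
\]
as required. The only part needing care is verifying that the change of variable really identifies the localized iterated Ore extension with $A_1(\Bbbk)[f(X_1)^{-1}]$ (rather than merely embeds one inside the other); this is handled by the basis argument above, which shows that the natural surjection $T[Y;\partial_{X_1}]\twoheadrightarrow H[f(X_1)^{-1}]$, $Y\mapsto X_2 f(X_1)^{-1}$, is injective because it sends a free $T$-basis to a free $T$-basis.
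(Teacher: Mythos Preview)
Your proof is correct and follows essentially the same approach as the paper: both invert $\delta(X_1)$ and pass to the new variable $Y=X_2\,\delta(X_1)^{-1}$, which satisfies $[Y,X_1]=1$. The paper's argument is a two-line sketch (``$Q$ is generated by $X_2(\delta(X_1))^{-1}$ and $X_1$, and these satisfy the defining relation of the Weyl skew field''), whereas you supply the extra care---the explicit identification $H[f(X_1)^{-1}]\cong A_1(\Bbbk)[f(X_1)^{-1}]$ via the triangular basis argument---that makes the isomorphism (rather than just a surjection) transparent.
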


\begin{proof} By Proposition \ref{xxpro5.6}(2) 
$H \equiv H({\bf d_s,b_s,c_{s,t}})$ for some choice 
of the parameters, with $\mathbf{d_s} \neq \mathbf{0}$ 
since $H$ is by assumption not commutative. Thus 
$H = \Bbbk \langle X_1, X_2 \rangle$ with 
$$ [X_2, X_1] = \delta(X_1) = \sum_{s\geq 0} d_s X_1^{p^s}.$$
The quotient division ring $Q$ of $H$ is therefore 
generated by $X_2(\delta(X_1))^{-1}$ and $X_1$, and 
these generators satisfy the defining relation of the 
Weyl skew field, as required.
\end{proof}

\subsection{Classification of 2-step IHOEs: 
isomorphisms and automorphisms}
\label{xxsec5.3}

This subsection has two interconnected purposes: we 
complete the classification begun in Proposition 
\ref{xxpro5.6} by determining when any two of the 
algebras listed there are isomorphic; and in so doing 
we describe all the automorphisms of these Hopf 
algebras. The proof of the main result requires 
three preliminary lemmas.

\begin{lemma}
\label{xxlem5.8}
Let $H = H({\bf 0, b_s, c_{s,t}})$ be a commutative 
2-step IHOE as described in Proposition \ref{xxpro5.6}, 
so
$$\begin{aligned}
\Delta(X_1)&=X_1\otimes 1+1\otimes X_1,\\
\Delta(X_2)&=X_2\otimes 1+1\otimes X_2+ w
\end{aligned}
$$ 
where $w$ is given  in \eqref{E0.2.2}. Let $P(H)$ 
denote its subspace of primitive elements.
\begin{enumerate}
\item[(1)]
If $w=0$, that is if ${\bf b_s} = {\bf c_{s,t}} = {\bf 0}$, 
then $P(H)$ has $\Bbbk$-basis 
$\{X_1^{p^i}, \, X_2^{p^j} : i,j \geq 0 \}$, so that 
$\Bbbk \langle P(H) \rangle = H$.
\item[(2)]
If $w\neq 0$, then $P(H)$ has $\Bbbk$-basis 
$\{X_1^{p^i}: i \geq 0 \}$, and 
$\Bbbk\langle P(H) \rangle = \Bbbk[X_1]$.
\end{enumerate} 
\end{lemma}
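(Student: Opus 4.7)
The plan is to treat the two cases separately. For (1), with $w=0$, both $X_1$ and $X_2$ are primitive, so $H$ splits as the Hopf tensor product $\Bbbk[X_1]\otimes\Bbbk[X_2]$, whence $P(H)=P(\Bbbk[X_1])\oplus P(\Bbbk[X_2])$. For $\Bbbk[X]$ with $X$ primitive in characteristic $p$ the primitive space is $\Bbbk\{X^{p^s}\}_{s\geq 0}$ (classical, or the $n=1$ case of Proposition \ref{xxpro5.4}(3)); the stated basis and the density $\Bbbk\langle P(H)\rangle=H$ follow at once.

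For (2) the inclusion $\Bbbk\{X_1^{p^i}:i\geq 0\}\subseteq P(H)$ is immediate, since $X_1$ is primitive and $H$ is commutative of characteristic $p$. I would argue the reverse inclusion by contradiction: suppose $f=\sum_{b=0}^N f_b(X_1)X_2^b\in P(H)$ has $f_N\neq 0$ and $N\geq 1$. Since $H\otimes H$ is commutative,
\[
\Delta(X_2^b)=(X_2\otimes 1+1\otimes X_2+w)^b
\]
expands multinomially, and one can decompose the equation $\Delta(f)=f\otimes 1+1\otimes f$ by $X_2$-bidegree on $H\otimes H$. Matching the top bidegrees $(N,0)$ and $(0,N)$ forces $f_N$ to be a nonzero scalar; matching $(N-k,k)$ for $1\leq k\leq N-1$ gives $\binom{N}{k}f_N=0$ for all such $k$, so by Kummer $N=p^a$ for some $a\geq 0$; and pairing the $(k,0)$ and $(0,k)$ components for $1\leq k\leq p^a-1$ shows each $f_k$ is also a scalar in $\Bbbk$.

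The crucial ingredient is then the Frobenius identity
\[
\Delta(X_2^{p^a})\;=\;X_2^{p^a}\otimes 1+1\otimes X_2^{p^a}+w^{p^a},
\]
obtained by iterating $(u+v+w)^p=u^p+v^p+w^p$ in the commutative ring $H\otimes H$. Substituting $f=f_NX_2^{p^a}+\sum_{k=1}^{p^a-1}f_kX_2^k+f_0(X_1)$ and reading off the $(0,0)$ bidegree yields, inside $\Bbbk[X_1]\otimes\Bbbk[X_1]$, the equation
\[
\partial(f_0)\;=\;-f_Nw^{p^a}-\sum_{k=1}^{p^a-1}f_kw^k.
\]
The main obstacle is to rule this out. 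I would grade both sides by total $X_1$-degree (which $\partial$ preserves, so $B^2$ is graded): if $d$ is the top $X_1$-degree occurring in $w$ and $w_d$ is its top homogeneous component, then each $w^k$ with $k<p^a$ has $X_1$-degree at most $kd<p^ad$, so the degree-$p^a d$ part of the right-hand side is exactly $-f_Nw_d^{p^a}$. Using $Z_s^p=Z_{s+1}$ and $Y_{s,t}^p=Y_{s+1,t+1}$, which follow from Fermat applied to the integer coefficients in \eqref{E5.4.1}, $w_d^{p^a}$ is a nonzero $\Bbbk$-linear combination of basis elements of $\PP^2_{1_C,1_C}(\Bbbk[X_1])$ from Proposition \ref{xxpro5.4}(4); hence $w_d^{p^a}\notin B^2$, contradicting $\partial(f_0)\in B^2$. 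The density assertion $\Bbbk\langle P(H)\rangle=\Bbbk[X_1]$ is then immediate.
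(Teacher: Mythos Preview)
Your proof is correct and follows the same overall strategy as the paper: expand $\Delta(f)$ via the multinomial formula for $\Delta(X_2)^b$ in the commutative ring $H\otimes H$, decompose by $X_2$-bidegree, and reduce part (2) to a nonvanishing statement in $\PP^2_{1,1}(\Bbbk[X_1])$ via Proposition~\ref{xxpro5.4}(4). The executions differ in two places. For (1) you invoke the Hopf tensor splitting $H\cong\Bbbk[X_1]\otimes\Bbbk[X_2]$ and the standard identity $P(A\otimes B)=P(A)\oplus P(B)$, whereas the paper handles (1) together with (2) by the same bidegree computation. For (2), the paper first argues that $f_i=0$ for every index $i$ not a power of $p$ and then shows the remaining $f_{p^s}$ lie in $\Bbbk$, arriving at $\partial(f_0)=-\sum_s\beta_s w^{p^s}$ and observing that this sum is itself of the form \eqref{E0.2.2}, hence nonzero in $\PP^2$. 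You instead only pin down $N=p^a$, deduce that all $f_k\in\Bbbk$ by the neat comparison of the $(k,0)$ and $(0,k)$ components (forcing $f_k\otimes 1=1\otimes f_k$), and then isolate the top $X_1$-degree $p^a d$ using Frobenius and the identities $Z_s^p=Z_{s+1}$, $Y_{s,t}^p=Y_{s+1,t+1}$ to extract a single nonzero basis cocycle. Both routes land on the same cohomological obstruction; yours trades the stronger intermediate conclusion (vanishing of the non-$p$-power coefficients) for a shorter and arguably cleaner argument.
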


\begin{proof}
Let $f:=\sum_{i=0}^n f_i X_2^i \in P(H)$
where $f_i\in \Bbbk[X_1]$. 
If $n=0$, then $f=f_0\in \Bbbk[X_1]$. It then follows from a 
direct computation that $f$ is of the form $\sum_{s\geq 0} 
\alpha_s X_1^{p^s}$ for some finite sequence of scalars 
$\alpha_s$. (This fact is also well-known.)

Next assume that the $X_2$-degree of $f$ is $n$, with 
$n\geq 1$, and consider the equation
\begin{equation}
\label{E5.8.1}\tag{E5.8.1} 
f \otimes 1 + 1 \otimes f = \Delta (f) 
= \sum_{i=0}^n \Delta(f_i)\Delta (X_2)^i. 
\end{equation}
For $i=p^s$, then using commutativity of $H$ and the 
fact that ${\rm{char}}\;\Bbbk=p$,
\begin{equation}
\label{E5.8.2}\tag{E5.8.2}
\Delta(X_2)^{p^s}
=X_2^{p^s}\otimes 1+1\otimes X_2^{p^s}+w^{p^s},
\end{equation}
where $w^{p^s} \in \Bbbk [X_1] \otimes \Bbbk [X_1]$. 

On the other hand, if $i\leq n$ and $i\neq p^s$, then the 
expansion of $\Delta(X_2^i)$ has at least one nonzero 
term of the form $c X_2^{j}\otimes X_2^{i-j}$ where 
$0<j<i$ and $0\neq c\in \Bbbk$. Suppose $f_i\neq 0$ for 
such an integer $i$. Since $H\otimes H$ is a domain, this 
implies that the expansion of $\Delta(f)$ has a nonzero 
term of the form $\Delta(f_i) c X_2^{j}\otimes X_2^{i-j}$ 
where $0<j<i$ and $0\neq c\in \Bbbk$. The total degree 
in $X_2$ of this term is $i$, so it cannot cancel with 
any other term in $f\otimes 1+1\otimes f (=\Delta(f))$. 
This contradicts the hypothesis that $f$ is primitive. 
Therefore $f=f_0 +\sum_{s\geq 0} g_{s} X_2^{p^s}$ for 
some $f_0, g_s\in \Bbbk[X_1]$. Since $f$ is primitive, 
\eqref{E5.8.1} and \eqref{E5.8.2} imply that
$$f\otimes 1+1\otimes f=\Delta(f_0)+\sum_{s\geq 0}
\Delta(g_s) 
[X_2^{p^s}\otimes 1+1\otimes X_2^{p^s}+w^{p^s}],$$
and hence $\Delta(g_s)=1\otimes g_s=g_s\otimes 1$
for all $s$. The counital  Hopf algebra axiom 
now forces $\beta_s:=g_s\in \Bbbk$ for all $s$. 

\noindent 
(1) If $w=0$, then $\sum_{s\geq 0} \beta_{s} X_2^{p^s}$
is clearly primitive. Hence $f_0$ is primitive and
$f_0=\sum_{s\geq 0} \alpha_s X_1^{p^s}$ for some
$\alpha_s\in \Bbbk$, and the claims follow.

\noindent (2) If $w\neq 0$ then it is of the form 
\eqref{E0.2.2}. Since ${\rm{char}}\; \Bbbk=p$,
for every $s\geq 0$, $w^{p^s}$ is also of the form of 
\eqref{E0.2.2}, but of higher $X_1$-degree. If $\beta_s\neq 0$ 
for some $s$, by counting the $X_1$-degree one sees that
$\sum_{s\geq 0} \beta_s w^{p^s}$ is another nonzero element of 
the form \eqref{E0.2.2}. In particular, the class of 
$\sum_{s\geq 0} \beta_s w^{p^s}$ in $\PP^2_{1, 1}(\Bbbk[X_1])$ 
is nonzero, by Proposition \ref{xxpro5.4}(4).

Recall that $f=f_0+\sum_{i\geq 1} f_i X_2^{i}
=f_0+\sum_{s\geq 0} \beta_s X_2^{p^s}$. By 
\eqref{E5.8.2},
$$\begin{aligned}
0&=\Delta(f)-1\otimes f-f\otimes 1\\
&=[\Delta(f_0)-1\otimes f_0-f_0\otimes 1]
+\sum_{s\geq 0} \beta_s w^{p^s}\\
&=\partial(f_0)+\sum_{s\geq 0} \beta_s w^{p^s}.
\end{aligned}
$$
This implies that the class 
$\sum_{s\geq 0} \beta_s w^{p^s}$ in 
$\PP^2_{1, 1}(\Bbbk[X_1])$ is equal to the class 
of $-\partial(f_0)$, which is zero by definition. 
This yields a contradiction. Therefore all 
$\beta_s=0$ and the result follows.
\end{proof}

\begin{lemma}
\label{xxlem5.9}
Let $H = H({\bf d_s,b_s,c_{s,t}})$  be a 
noncommutative 2-step IHOE as in Proposition 
\ref{xxpro5.6}. That is, $\delta(X_1)\neq 0$, 
equivalently ${\bf d_s} \neq {\bf 0}$. 
\begin{enumerate}
\item[(1)]
The commutator ideal $[H,H]$ is a nonzero 
Hopf ideal of $H$. 
\item[(2)]
Let $K$ denote the Hopf algebra $\Bbbk[t]$, so $t$ 
is primitive. Suppose $\phi: H\to K$ is a Hopf 
algebra epimorphism. Then $\ker \phi$ is the 
principal ideal generated by $X_1$.
\item[(3)]
For any Hopf algebra epimorphism $\phi: H\to K$,
$H^{co\; K}$ is the Hopf subalgebra $\Bbbk[X_1]$ 
of $H$.
\end{enumerate}
\end{lemma}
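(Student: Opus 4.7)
My plan is to treat the three parts in sequence, exploiting commutativity of $K$ in (2) and (3) together with the explicit description of $H$ supplied by Proposition \ref{xxpro5.6}.

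For (1), $[H,H]$ is nonzero because the hypothesis $\delta(X_1)\neq 0$ gives $[X_2,X_1]=\sum_s d_s X_1^{p^s}\neq 0$. That the commutator ideal of any Hopf algebra is a Hopf ideal is a standard computation: in Sweedler notation
\[
\Delta([x,y]) \,=\, [x_1,y_1]\otimes x_2 y_2 \,+\, y_1 x_1\otimes [x_2,y_2] \,\in\, [H,H]\otimes H + H\otimes [H,H],
\]
while $\epsilon([x,y])=0$ and $S([x,y])=-[S(x),S(y)]\in[H,H]$; each property extends to the two-sided ideal generated by commutators.

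For (2), commutativity of $K$ forces $\phi([H,H])=0$, so $\sum_s d_s\phi(X_1)^{p^s}=0$ in $K$. Since $\phi$ is a coalgebra map, $u:=\phi(X_1)$ is primitive in $\Bbbk[t]$, hence a $p$-polynomial $u=\sum_i\alpha_i t^{p^i}$. If $u\neq 0$ with top $t$-degree $m$, then the largest $s_0$ with $d_{s_0}\neq 0$ produces a nonzero $t^{mp^{s_0}}$-coefficient in $\sum_s d_s u^{p^s}$, a contradiction. Thus $\phi(X_1)=0$ and $\phi$ factors through $H/(X_1)$. The plan is then to identify this quotient: relation \eqref{E4.0.1} collapses because $\bar X_1=0$, and every summand of the cocycle $w$ of \eqref{E0.2.2} has both tensor factors in $X_1\Bbbk[X_1]$, so $H/(X_1)\cong\Bbbk[\bar X_2]$ as Hopf algebras with $\bar X_2$ primitive. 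The induced surjection $\Bbbk[\bar X_2]\twoheadrightarrow\Bbbk[t]$ is an isomorphism because its kernel is a prime ideal in a PID which cannot be maximal without making the quotient a field, so $\ker\phi=(X_1)$.

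For (3), by (2) and after composing with a rescaling automorphism of $K$, I may assume $\phi(X_1)=0$ and $\phi(X_2)=t$. The inclusion $\Bbbk[X_1]\subseteq H^{co\; K}$ is immediate: $\phi|_{\Bbbk[X_1]}$ coincides with $\epsilon_H|_{\Bbbk[X_1]}$, so the counit axiom yields $(\id\otimes\phi)\Delta(f)=f\otimes 1_K$ for $f\in\Bbbk[X_1]$. For the reverse inclusion, the plan is to expand $h\in H$ in the PBW basis as $h=\sum_{i\geq 0}h_i X_2^i$ with $h_i\in\Bbbk[X_1]$ and compute $(\id\otimes\phi)\Delta(h)$ directly. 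Since $(\id\otimes\phi)$ is an algebra map and $(\id\otimes\phi)(w)=0$ (every summand of $w$ has its second tensor factor in the augmentation ideal of $\Bbbk[X_1]$), one gets $(\id\otimes\phi)\Delta(X_2)=X_2\otimes 1+1\otimes t$; since these commuting elements of $H\otimes K$ obey the binomial theorem,
\[
(\id\otimes\phi)\Delta(h) \,=\, \sum_{i\geq 0}\sum_{j=0}^{i}\binom{i}{j}\, h_i X_2^{i-j}\otimes t^j.
\]
Letting $n$ be the top $X_2$-degree in $h$, the coefficient of $\otimes t^n$ on the left is $h_n$, which must match the corresponding coefficient of $h\otimes 1_K$, namely $0$, whenever $n\geq 1$. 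Hence $n=0$ and $h\in\Bbbk[X_1]$.

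The main obstacle is the combinatorial bookkeeping in (3), especially verifying that $(\id\otimes\phi)$ annihilates the cocycle $w$ so that the calculation reduces to the essentially binomial formula above; the decisive simplification is that, once $\phi(X_1)=0$, every positive power of $X_1$ collapses under $(\id\otimes\phi)$, leaving only the $X_2$-tower to analyse.
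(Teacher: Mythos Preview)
Your proof is correct and follows essentially the same route as the paper. In part (1) you spell out the Sweedler-notation computation where the paper simply cites \cite[Lemma 3.7]{GZ1}; in part (2) both arguments show $\phi(X_1)=0$ by observing that a nonzero $p$-polynomial image would make $f(\phi(X_1))\neq 0$, and you add the (easy but worthwhile) sentence identifying $H/(X_1)$ to conclude $\ker\phi=(X_1)$; in part (3) both proofs compute $(\mathrm{id}\otimes\phi)\Delta$ on a PBW expansion and read off the top $X_2$-coefficient. Your normalization $\phi(X_2)=t$ is a cosmetic simplification (and is justified since $\phi(X_2)$ is a primitive generator of $\Bbbk[t]$, hence a nonzero scalar multiple of $t$), and your observation $(\mathrm{id}\otimes\phi)(w)=0$ is in fact cleaner than the paper's displayed formula, which carries a superfluous $-\sum_{t>0}c_{0,t}X_1^{p^t}\otimes 1$ term.
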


\begin{proof}
(1) It is well-known that the commutator ideal 
is a Hopf ideal \cite[Lemma 3.7]{GZ1}. Since $H$ 
is noncommutative, $[H,H]\neq 0$.

\noindent 
(2) Let $f(X_1)$ denote the polynomial 
$\delta(X_1)\in \Bbbk[X_1]$.
Then $f(X_1)$ is a nonzero element in $[H,H]$. 
Since $\mathrm{im}\phi$ is commutative, $[H,H]$ is a
subspace of $\ker \phi$, so $\phi(f(X_1))=0$. We claim
that $\phi(X_1)=0$. Suppose not, and let $y=\phi(X_1)$. Since
$X_1$ is primitive, so is $y$. Then 
$0 \neq y = \sum_{s\geq 0} \alpha_s t^{p^s}$
for a finite sequence of scalars $\alpha_s$, so that
$$\phi(f(X_1))=f(\phi(X_1))=f(y)=
f(\sum_{s\geq 0} \alpha_s t^{p^s})$$
which is nonzero. This yields a contradiction. 
Thus $y=0$ as required.

\noindent 
(3) As 
$$ (\mathrm{Id} \otimes \phi)\circ \Delta (X_1)
 = X_1 \otimes \phi(1),$$
$X_1 \in H^{co\; K}$ and so 
$\Bbbk [X_1] \subseteq H^{co \; K}$. To prove the 
reverse inclusion, let $g(X_1,X_2)$ be any element 
in $H^{co\; K}$. Since $\phi(X_1)=0$ by part (2) 
and $\phi$ is an epimorphism, $\phi(X_2)$ is an 
algebra generator of $K$. Now
$$\begin{aligned}
g(X_1\otimes 1, X_2\otimes 1)&=g(X_1,X_2)\otimes 1\\
&=(\mathrm{Id} \otimes \phi) \Delta g(X_1,X_2)\\
&=g((\mathrm{Id} \otimes \phi)\Delta(X_1),(\mathrm{Id} \otimes \phi) \Delta (X_2))\\
&=g(X_1\otimes 1, X_2\otimes 1+1\otimes \phi(X_2)-\sum_{t>0} c_{0,t} X_1^{p^t}\otimes 1).
\end{aligned}
$$
This implies that $X_2$-degree of $g$ is zero, 
so $g\in \Bbbk[X_1]$ as required.
\end{proof}

\begin{lemma}
\label{xxlem5.10}
Let $H = \Bbbk \langle X_1,X_2 \rangle$ and 
$H'=  \Bbbk \langle X_1',X_2' \rangle$ be 
two 2-step IHOEs as listed in Proposition 
\ref{xxpro5.6}. Let $\phi: H \to H'$ 
be a Hopf algebra isomorphism such that 
$\phi(\Bbbk [X_1]) = \Bbbk [X_1']$. Then 
$\phi(X_2)=c X_2'+ v(X_1')$ for some 
$0\neq c\in \Bbbk$ and $v(X_1')\in \Bbbk[X_1']$.
\end{lemma}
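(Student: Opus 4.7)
The plan is to expand $\phi(X_2) = \sum_{j=0}^{n} g_j(X_1')(X_2')^j$ in the PBW basis of $H'$ (with $g_n \neq 0$), and to show that $n = 1$ with $g_1 \in \Bbbk^{\times}$. Since $H$ and $H'$ are 2-step IHOEs as listed in Proposition \ref{xxpro5.6}, one works throughout with $\sigma = \sigma' = \mathrm{Id}$, with $X_1, X_1'$ primitive, and with $\Delta(X_2) = X_2 \otimes 1 + 1 \otimes X_2 + w$ for some $w \in \Bbbk[X_1] \otimes \Bbbk[X_1]$ (and similarly for $X_2'$).

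First I would pin down $\phi(X_1)$. Since $\phi|_{\Bbbk[X_1]}\colon \Bbbk[X_1] \to \Bbbk[X_1']$ is a Hopf isomorphism, $\phi(X_1)$ is a primitive element of $\Bbbk[X_1']$ that algebra-generates $\Bbbk[X_1']$. The primitive elements of $\Bbbk[X_1']$ are the $p$-polynomials $\sum_s \alpha_s (X_1')^{p^s}$, and only the linear ones generate $\Bbbk[X_1']$; hence $\phi(X_1) = \alpha X_1'$ for some $\alpha \in \Bbbk^{\times}$.

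Next I would control $n$ using the $X_2'$-filtration $F_k := \bigoplus_{j \leq k} \Bbbk[X_1'](X_2')^j$ on $H'$. A routine check shows that $\mathrm{gr}_F H' \cong \Bbbk[X_1', \overline{X_2'}]$ is a commutative polynomial Hopf algebra in which both generators are primitive. Applying $\phi \otimes \phi$ to $\Delta(X_2) - X_2 \otimes 1 - 1 \otimes X_2 = w$ yields
$$\Delta(\phi(X_2)) - \phi(X_2) \otimes 1 - 1 \otimes \phi(X_2) = (\phi \otimes \phi)(w) \in \Bbbk[X_1'] \otimes \Bbbk[X_1'],$$
which has total $X_2'$-degree $0$ in $H' \otimes H'$. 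Since $\Delta$ is filtered for the tensor filtration, comparing the top sum-degree-$n$ components forces the principal symbol $g_n(X_1')(\overline{X_2'})^n$ to be primitive in $\mathrm{gr}_F H'$. A standard characteristic-$p$ calculation shows that the primitive elements of a commutative polynomial Hopf algebra with primitive generators are the $\Bbbk$-linear span of the pure $p$-powers $(X_1')^{p^s}$ and $(\overline{X_2'})^{p^t}$; for $n \geq 1$ this forces $g_n = c \in \Bbbk^{\times}$ and $n = p^t$ for some $t \geq 0$.

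The last and hardest step is to rule out $n = p^t$ with $t \geq 1$, since the coproduct constraint alone does not preclude it -- indeed Jacobson's formula applied to $(X_2' \otimes 1 + 1 \otimes X_2' + w')^p$ shows that $(X_2')^p$ is itself primitive modulo $\Bbbk[X_1'] \otimes \Bbbk[X_1']$. Here I would invoke the surjectivity of $\phi$. If $X_2' = \phi(f)$ for some $f \in H$, expand $f = \sum f_{i,j} X_1^i X_2^j$ and compute $\phi(f) = \sum f_{i,j} \alpha^i (X_1')^i \phi(X_2)^j$. The leading $X_2'$-symbol of $\phi(X_2)^j$ is $c^j (X_2')^{nj}$, and the leading symbols of distinct $(X_1')^i \phi(X_2)^j$ occupy distinct positions in the PBW basis of $H'$, ruling out any cancellation of top symbols. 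Hence the top $X_2'$-degree of $\phi(f)$ equals $nJ$, where $J := \max\{j : f_{i,j} \neq 0 \text{ for some } i\}$; setting $nJ = 1$ forces $n = 1$. (The case $n = 0$ is excluded because then $\phi(H) \subseteq \Bbbk[X_1']$, contradicting $\GKdim H = 2$.) Thus $n = 1$, $g_1 = c \in \Bbbk^{\times}$, and $\phi(X_2) = c X_2' + v(X_1')$ with $v(X_1') := g_0(X_1')$.
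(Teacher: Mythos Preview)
Your proof is correct, but you take a considerably longer route than the paper. The paper observes at the outset that \emph{no Hopf structure is needed}: since $\phi(\Bbbk[X_1])=\Bbbk[X_1']$, both $H$ and $H'$ are Ore extensions over isomorphic base rings, so one may as well assume $\phi(X_1)=X_1'$. Letting $a$ be the $X_2'$-degree of $\phi(X_2)$ and $b$ the $X_2$-degree of $\phi^{-1}(X_2')$, multiplicativity of these degrees under composition gives $ab=1$, hence $a=b=1$. Writing $\phi(X_2)=c(X_1')X_2'+v(X_1')$ and $\phi^{-1}(X_2')=d(X_1)X_2+u(X_1)$ and composing, the leading coefficients satisfy $c(X_1')d(X_1')=1$, forcing $c\in\Bbbk^{\times}$.

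Your argument reaches the same endpoint but inserts an intermediate coalgebraic step: you pass to $\mathrm{gr}_F H'$ and use primitivity to pin down $n=p^t$ and $g_n\in\Bbbk^{\times}$ before invoking surjectivity. That surjectivity step is exactly the paper's $ab=1$ argument in disguise (your $n$ is their $a$, your $J$ is their $b$), and it alone already forces $n=1$---so the coproduct analysis, while correct, is doing no essential work. The upside of your approach is that it also yields $g_n\in\Bbbk^{\times}$ directly from the Hopf structure, whereas the paper extracts this from the second application of bijectivity; but overall the paper's purely algebraic argument is shorter and shows that the lemma is really a statement about Ore extensions rather than Hopf algebras.
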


\begin{proof} We will not need any Hopf algebra structure
for this proof. So up to a change of variable, we can assume
that $\phi(X_1)=X_1'$.

Let $a$ be the $X_2'$-degree of $\phi(X_2)$
and $b$ be the $X_2$-degree of $\phi^{-1}(X_2')$.
Since $\phi(X_1)=X_1'$ and both
$H$ and $H'$ are Ore extensions, it is easy to see 
that $ab=1$. This forces $a=b=1$.
Write $\phi(X_2)=c(X_1') X_2'+v(X_1')$ and 
$\phi^{-1}(X_2')=d(X_1) X_2+ u(X_1)$. Then $c(X_1')
d(X_1')=1$, so $0 \neq c:=c(X_1') \in \Bbbk$ is a 
nonzero scalar in $\Bbbk$.
\end{proof}

We can now describe the Hopf algebra isomorphisms and 
automorphisms between 2-step IHOEs. In what follows 
we shall use the adjective {\it trivial} to specify 
the IHOE $H({\bf 0,0,0})$ - in other words, the 
\emph{trivial} 2-step IHOE is the coordinate ring of 
$(\Bbbk,+) \times (\Bbbk,+)$. To describe 
$\mathrm{Aut}(H({\bf 0,0,0}))$, let $F$ denote the 
automorphism of $(\Bbbk, +)$ which maps  
$\lambda \in \Bbbk$ to $\lambda^p$. Thus $F$ and 
$\Bbbk \setminus \{0\}$ are in $\mathrm{Aut}((\Bbbk, +))$, 
where elements of $\Bbbk \setminus \{0\}$ act by left multiplication, 
and together they generate in $\mathrm{End}((\Bbbk,+))$ 
the skew polynomial subalgebra $A:=\Bbbk [F ; \sigma]$, 
where $\sigma (\lambda) = \lambda^{-p}$ for 
$\lambda \in \Bbbk$.

In the next proposition, let $H$ and 
$\Bbbk\langle X_1,X_2\rangle$ denote 
$H({\bf d_s,b_s,c_{s,t}})$ and let $H'$ and 
$\Bbbk\langle X_1',X_2'\rangle$ denote 
$H({\bf d'_s,b'_s,c'_{s,t}})$ as listed in 
Proposition \ref{xxpro5.6}.

\begin{proposition}
\label{xxpro5.11}
Retain the above notation.
\begin{enumerate}
\item[(1)]
Let $\phi: H\to H'$ be a Hopf algebra isomorphism.
Then there are nonzero scalars $\alpha,\beta$ in $\Bbbk$ 
such that, for all $s \geq 0$, $s<t$,
\begin{equation}
\label{E5.11.1}\tag{E5.11.1}
d'_s=d_s \alpha^{p^s-1}\beta^{-1}, \quad 
b'_s=b_s \alpha^{p^{s+1}}\beta^{-1}, \quad
c'_{s,t}=c_{s,t} \alpha^{p^s+p^t}\beta^{-1}.
\end{equation}
Conversely, if $\alpha$ and $\beta$ are nonzero scalars 
such that \eqref{E5.11.1} holds, then there is a Hopf 
algebra isomorphism 
$\phi:H\to H'$ such that
$$\phi(X_1)=\alpha X_1', \quad {\text{and}}\quad 
\phi(X_2)=\beta X_2'.$$
\item[(2)]
Suppose $H$ is not trivial. Then 
every Hopf algebra isomorphism from $H$ to $H'$ has the form
$$\phi(X_1)=\alpha X_1', \quad {\text{and}}\quad 
\phi(X_2)=\beta X_2'+\sum_{s\geq 0} e_s X_1'^{p^s},$$
for an arbitrary finite sequence of scalars 
$\{e_s : s \geq 0 \}$, and nonzero scalars $\alpha$ and 
$\beta$ satisfying \eqref{E5.11.1}. 
\item[(3)]
If $H$ is not trivial, then every Hopf algebra automorphism 
of $H$ is of the form
$$\begin{aligned}
\phi(X_1)&=\alpha X_1, \\
\phi(X_2)&=\beta X_2+\sum_{s\geq 0} e_s X_1^{p^s}
\end{aligned}
$$
where $\{e_s : s \geq 0 \}$ is an arbitrary finite 
sequence of scalars, and $\alpha,\beta$ are nonzero 
scalars satisfying
\begin{equation}
\label{E5.11.2}\tag{E5.11.2}
d_s=d_s \alpha^{p^s-1}\beta^{-1}, \quad 
b_s=b_s \alpha^{p^{s+1}}\beta^{-1}, \quad
c_{s,t}=c_{s,t} \alpha^{p^s+p^t}\beta^{-1};
\end{equation}
equivalently,
\begin{equation}
\notag
\beta=\begin{cases}
\alpha^{p^s-1} &\textit{ if } \; d_s\neq 0,\\
\alpha^{p^{s+1}} &\textit{ if } \;b_s\neq 0,\\
\alpha^{p^s+p^t} &\textit{ if } \; c_{s,t}\neq 0.
\end{cases}
\end{equation}
\item[(4)]
Suppose that $H$ is trivial, so 
$H \cong \mathcal{O}((\Bbbk,+)^2)$. Then, in the 
notation introduced before the proposition, 
$$\mathrm{Aut}(H) \cong GL_2(A).$$
\end{enumerate}
\end{proposition}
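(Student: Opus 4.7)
The plan is to handle parts (1)--(3) by a unified argument and then treat part (4) separately through the module structure on primitive elements.

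For the forward direction of parts (1)--(3), start with a Hopf isomorphism $\phi : H \to H'$. The first step is to show $\phi(\Bbbk[X_1]) = \Bbbk[X_1']$, using an intrinsic characterisation of $\Bbbk[X_1]$: in the noncommutative case $\mathbf{d_s} \neq \mathbf{0}$, Lemma \ref{xxlem5.9} identifies $\Bbbk[X_1]$ as $H^{co\; K}$ for the quotient $K = H/[H,H]$, preserved under $\phi$ by naturality; in the commutative nontrivial case $\mathbf{d_s} = \mathbf{0}$ with $w \neq 0$, Lemma \ref{xxlem5.8}(2) identifies $\Bbbk[X_1]$ as the subalgebra generated by the primitive elements, again manifestly preserved. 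Lemma \ref{xxlem5.10} then gives $\phi(X_2) = c X_2' + v(X_1')$ with $0 \neq c \in \Bbbk$ and $v(X_1') \in \Bbbk[X_1']$, while $\phi(X_1)$ is a primitive element of $\Bbbk[X_1']$, hence of the form $\sum_s \alpha_s X_1'^{p^s}$.

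The scalar relations \eqref{E5.11.1} now emerge from the Ore and coalgebra relations. Applying $\phi$ to $[X_2, X_1] = \delta(X_1)$, using the elementary identity $[X_2', Q(X_1')] = Q'(X_1')\,\delta'(X_1')$ for $Q \in \Bbbk[X_1']$, and noting that in characteristic $p$ the formal derivative of a primitive polynomial $\sum_s \alpha_s X_1'^{p^s}$ collapses to the scalar $\alpha_0$, the left-hand side becomes $c\alpha_0\,\delta'(X_1')$. Matching this against $\phi(\delta(X_1)) = \sum_{s,r} d_s \alpha_r^{p^s} X_1'^{p^{r+s}}$ coefficient-by-coefficient produces a triangular system which, in the noncommutative case, forces $\alpha_s = 0$ for all $s \geq 1$ and gives the first identity of \eqref{E5.11.1} with $\alpha := \alpha_0$ and $\beta := c$. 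Setting $\phi(X_1) = \alpha X_1'$ and using $(\phi \otimes \phi)(Z_s) = \alpha^{p^{s+1}} Z_s'$ and $(\phi \otimes \phi)(Y_{s,t}) = \alpha^{p^s + p^t} Y_{s,t}'$, the relation $\Delta\phi(X_2) = (\phi \otimes \phi)\Delta(X_2)$ yields the remaining two identities of \eqref{E5.11.1} and forces $\partial(v(X_1')) = 0$; by Proposition \ref{xxpro5.4}(3) this means $v(X_1') = \sum_s e_s X_1'^{p^s}$. In the commutative nontrivial case, the same argument is run on the coalgebra relation alone, with a leading-order comparison in $\Bbbk[X_1'] \otimes \Bbbk[X_1']$ forcing $\alpha_s = 0$ for $s \geq 1$. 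This delivers parts (1) forward, (2), and (3); the converse of (1) is a direct check that the prescribed $\phi(X_1) = \alpha X_1'$, $\phi(X_2) = \beta X_2'$ respects the Ore and coalgebra relations exactly when \eqref{E5.11.1} holds.

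For part (4), $H = \mathcal{O}((\Bbbk,+)^2) = \Bbbk[X_1, X_2]$ with $X_1, X_2$ primitive. By Lemma \ref{xxlem5.8}(1) the primitive subspace $P(H)$ has $\Bbbk$-basis $\{X_i^{p^s} : i = 1, 2,\; s \geq 0\}$ and generates $H$ as an algebra. Equip $P(H)$ with the $A$-module action in which $F$ acts as Frobenius ($F \cdot v := v^p$) and scalars act by multiplication; then $P(H)$ is free of rank two on $\{X_1, X_2\}$. Since $X_1, X_2$ freely generate the polynomial algebra $H$, any choice of primitive elements $\phi(X_1), \phi(X_2)$ determines a unique $\Bbbk$-algebra endomorphism of $H$, and primitivity of the images guarantees compatibility with $\Delta$, $\epsilon$, and $S$, so this is automatically a Hopf endomorphism. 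Conversely every Hopf endomorphism sends $P(H)$ into itself and commutes with Frobenius on $P(H)$, hence is $A$-linear there. Representing $A$-linear endomorphisms of $P(H) \cong A^{\oplus 2}$ as $2 \times 2$ matrices over $A$ and verifying that composition of Hopf endomorphisms corresponds to matrix multiplication (the convention for $\sigma$ in the definition of $A$ is arranged so that this is a genuine ring homomorphism) delivers a ring isomorphism $\mathrm{End}_{\mathrm{Hopf}}(H) \cong M_2(A)$; passing to units yields $\mathrm{Aut}(H) \cong GL_2(A)$.

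The main obstacle is the forward direction of (1)--(2): one must exclude the possibility that $\phi(X_1) = \sum_s \alpha_s X_1'^{p^s}$ has a nonzero $\alpha_s$ for some $s \geq 1$. The characteristic-$p$ fact that the formal derivative of a primitive polynomial is a scalar is what makes the Ore analysis tractable in the noncommutative case; the commutative nontrivial case requires a more delicate leading-order argument in $\Bbbk[X_1']\otimes\Bbbk[X_1']$, but rests on the same principle. Everything else --- the coalgebra bookkeeping, the converse, and part (4) --- is routine once this reduction is in place.
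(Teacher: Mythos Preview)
Your overall strategy matches the paper's, but the step where you reduce $\phi(X_1)$ to $\alpha X_1'$ has a genuine gap. Having established $\phi(\Bbbk[X_1]) = \Bbbk[X_1']$, you write $\phi(X_1) = \sum_s \alpha_s X_1'^{p^s}$ using only primitivity, and then assert that comparing $c\alpha_0\,\delta'(X_1')$ with $\phi(\delta(X_1))$ ``forces $\alpha_s = 0$ for $s \geq 1$'' via a triangular system. It does not: the resulting equations $c\alpha_0 d'_u = \sum_{r+s=u} d_s \alpha_r^{p^s}$ simply express the $d'_u$ in terms of the $d_s$, $\alpha_r$, $c$, and place no constraint on the $\alpha_r$ themselves. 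For instance, if $H$ has only $d_0 = 1$ and $w = 0$, then $X_1 \mapsto X_1' + X_1'^p$, $X_2 \mapsto X_2'$ defines a genuine Hopf algebra \emph{homomorphism} into the $H'$ with $d'_0 = d'_1 = 1$ and $w' = 0$, satisfying all of your Ore and coalgebra constraints; it merely fails to be surjective. Your argument never invokes bijectivity of $\phi$ at this point, and the same objection applies to your ``leading-order comparison'' in the commutative nontrivial case.

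The paper's fix is one line and uses only what you already have: since $\phi$ restricts to a $\Bbbk$-algebra \emph{isomorphism} $\Bbbk[X_1] \to \Bbbk[X_1']$, the element $\phi(X_1)$ must generate $\Bbbk[X_1']$ as an algebra, hence has $X_1'$-degree exactly $1$, so $\phi(X_1) = \alpha X_1' + a_0$ with $\alpha \neq 0$; primitivity then kills $a_0$. (This is precisely the degree argument hidden in the first line of the proof of Lemma \ref{xxlem5.10}, which begins by reducing to $\phi(X_1) = X_1'$.) With this in place the rest of your computation for (1)--(3) goes through unchanged. A minor side remark: $H/[H,H] \cong \Bbbk[X_1,X_2]/(\delta(X_1))$ is in general not $\Bbbk[t]$, so your invocation of Lemma \ref{xxlem5.9} should use the quotient $K = H/(X_1) \cong \Bbbk[X_2]$ singled out by Lemma \ref{xxlem5.9}(2). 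Your treatment of (4) via the $A$-module structure on $P(H)$ is correct and more hands-on than the paper's citation of the categorical equivalence in \cite{Mi}.
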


\begin{proof}
(1) If both $H$ and $H'$ are trivial, then the assertions
are obvious. Now we assume that $H$ is not trivial. 

If $H$ is noncommutative, then $H'$ is noncommutative, 
and so $H'$ is not trivial. If $H$ is commutative, then 
$w\neq 0$ and, by Lemma \ref{xxlem5.8}, $H'$ is not 
trivial. Thus, in both cases, $H'$ is not trivial. 

By Lemma \ref{xxlem5.8}(2) and Lemma \ref{xxlem5.9}(3),
the Hopf algebra isomorphism $\phi: H\to H'$ maps 
$\Bbbk[X_1]$ to $\Bbbk [X_1']$. In particular, 
$\Bbbk [X_1'] = \Bbbk [\phi(X_1)]$, so 
$\phi(X_1)=\alpha X_1'+a_0$ for a nonzero scalar 
$\alpha$ and a scalar $a_0$. Since $X_1$ is primitive, 
$a_0=0$ and $\phi(X_1)=\alpha X_1'$. By Lemma \ref{xxlem5.10},
$\phi(X_2)=\beta X_2'+ v(X_1')$, where $\beta$ is a 
nonzero scalar and $v(X_1')\in \Bbbk[X_1']$. Applying
$\phi$ to the defining relation of $H$, namely
$$X_2 X_1-X_1X_2=\delta(X_1)=\sum_{s\geq 0} d_s X_1^{p^s},$$
we obtain the following equation in $H'$:
$$\alpha\beta(X_2' X_1'-X_1'X_2')
=\sum_{s\geq 0} d_s \alpha^{p^s} X_1'^{p^s},$$
which must agree with 
$$X_2' X_1'-X_1'X_2'=\delta'(X_1')
=\sum_{s\geq 0} d'_s X_1'^{p^s}.$$
Therefore $d'_s=d_s \alpha^{p^s-1}\beta^{-1}$ for 
all $s$. 

Recall that $\partial$ denotes the map defined in 
\eqref{E5.1.1}. Applying $\phi\otimes \phi$ to 
the following equation in $H\otimes H$,
$$\Delta(X_2)=X_2\otimes 1+1\otimes X_2+w,$$
we obtain an equation in $H'\otimes H'$, namely
$$\beta \Delta(X_2')+\partial(v(X_1'))=\beta
(X_2'\otimes 1+1\otimes X_2') +(\phi\otimes \phi)(w).$$
With the obvious notation $w' := \partial(X_2')$, 
the above equation must agree with 
$$\Delta(X_2')=X_2'\otimes 1+1\otimes X_2' +w'.$$
Using the explicit form of $(\phi\otimes \phi)(w)$, 
one can show that $\partial(v(X_1'))=0$ and 
$w'=\beta^{-1} (\phi\otimes \phi)(w)$. The former 
implies that $v(X_1')\in \Bbbk[X_1']\subseteq H'$ 
is primitive and the latter implies that
\begin{equation}
\notag
b'_s=b_s \alpha^{p^{s+1}}\beta^{-1}, \quad
c'_{s,t}=c_{s,t} \alpha^{p^s+p^t}\beta^{-1}
\end{equation}
for all $s<t$. Therefore \eqref{E5.11.1} holds.

The converse is clear. 

\noindent (2) By the proof of part (1), $\phi(X_1)=\alpha X_1'$
and $\phi(X_2)=\beta X_2'+v(X_1')$, with $v(X_1')$  
primitive. Since every primitive element element
in $\Bbbk[X_1]$ is of the form 
$\sum_{s\geq 0} e_s X_1^{p^s}$, the assertion follows.

\noindent 
(3) This is a special case of part (2).

\noindent 
(4) Suppose that $H = H({\bf 0,0,0})$ is trivial. 
That $\mathrm{Aut}(H) \cong GL_2(A)$ is an 
immediate consequence of the equivalence of 
categories between the category of unipotent 
algebraic groups over $\Bbbk$ which are 
subgroups of $(\Bbbk, +)^r$ for some $r$, and 
the category of finitely generated $A$-modules, 
under which $(k,+)$ corresponds to the free 
$A$-module of rank 1, 
\cite[Theorem 14.46 and Example 14.40]{Mi}.
\end{proof}

\section{Comments and questions}
\label{xxsec6} 

We gather here a number of observations related 
to the classification results in $\S$\ref{xxsec5.3}.

\subsection{Algebraic groups}
\label{xxsec6.1} 
To recast $\S\S$\ref{xxsec5.2},\ref{xxsec5.3} in the language 
of affine algebraic $\Bbbk$-groups, first consider the overuse 
in this context of the word ``connected'': a Hopf algebra $H$ 
is by definition \emph{connected} if its coradical 
$H_0$ is $\Bbbk$, equivalently if it has a unique 
simple comodule \cite[Definition 5.1.5]{Mo1}; an 
affine algebraic $\Bbbk$-group $G$ is \emph{unipotent} 
if and only if its coordinate ring $H=\mathcal{O}(G)$ 
is a connected Hopf algebra \cite[Theorem 14.5]{Mi}; 
and an affine algebraic $\Bbbk$-group $G$ is 
\emph{connected} if and only if it equals $G^{\circ}$, 
its connected component of $1_G$. Equivalently, 
$G$ is connected if and only if $\mathcal{O}(G)$ 
is a domain\footnote{In the algebraic groups 
literature this confusion is sometimes avoided by 
using \emph{coconnected} for the first of these usages.}. 
In characteristic 0 every unipotent group $U$ is connected 
- that is, $\mathcal{O}(U)$ is always a domain; but the 
cyclic group of order $p$ shows that this is false in 
characteristic $p$. 

The key part $(2)\Longrightarrow (1)$ 
of the following fundamental result is due to Lazard 
\cite{La}; a short proof valid in all characteristics 
can be found in \cite[$\S$4]{KP}. For the full 
statement of the theorem, see for example \cite[$\S$8]{Ka}.

\begin{theorem}
\label{xxthm6.1} 
Let $\Bbbk$ be an algebraically closed field of 
arbitrary characteristic, and let $G$ be an affine 
algebraic $\Bbbk$-group and let $n$ be a positive 
integer. Then the following are equivalent.
\begin{enumerate}
\item[(1)] 
$G$ is a connected unipotent group over $\Bbbk$ of dimension $n$.
\item[(2)] 
$\mathcal{O}(G) \cong \Bbbk [X_1, \ldots , X_n]$ as 
$\Bbbk$-algebras.
\item[(3)] 
$\mathcal{O}(G)$ is an affine connected Hopf domain of 
Gel'fand-Kirillov dimension $n$.
\item[(4)] 
$\mathcal{O}(G)$ is an $n$-step IHOE.
\item[(5)] 
$G$ has a subnormal series of length $n$ with factors 
isomorphic to $(\Bbbk,+)$.
\item[(6)] 
$G$ has a central series of length $n$ with factors 
isomorphic to $(\Bbbk,+)$.
\end{enumerate}
\end{theorem}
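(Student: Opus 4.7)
The plan is to close the cycle $(1) \Rightarrow (6) \Rightarrow (4) \Rightarrow (2) \Rightarrow (1)$ together with the side implications $(6) \Rightarrow (5) \Rightarrow (1)$, and to establish the equivalence $(1) \Leftrightarrow (3)$ directly from the dictionary between affine algebraic $\Bbbk$-groups and commutative affine Hopf $\Bbbk$-algebras. The latter equivalence is a pure translation: $G$ is geometrically connected iff $\mathcal{O}(G)$ is a domain; $G$ is unipotent iff $\mathcal{O}(G)$ is a connected Hopf algebra by \cite[Theorem 14.5]{Mi}; and $\dim G = \GKdim \mathcal{O}(G)$ for affine algebraic groups over an algebraically closed field. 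The only deep input in the whole argument will be Lazard's theorem supplying $(2) \Rightarrow (1)$, which I import as cited.

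For $(1) \Rightarrow (6)$, I would invoke Kolchin's theorem: a connected unipotent algebraic group over an algebraically closed field is nilpotent, and thus admits a central series. Each factor is a connected abelian unipotent group, hence an iterated extension of copies of $\mathbb{G}_a = (\Bbbk,+)$, so one can refine using flags to obtain a central series in which every factor is $\mathbb{G}_a$. The implication $(6) \Rightarrow (5)$ is immediate. The reverse $(5) \Rightarrow (1)$ follows by induction on $n$: a subnormal extension of a connected unipotent group by $\mathbb{G}_a$ is again connected and unipotent, and dimensions add in short exact sequences.

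The crucial step is $(6) \Rightarrow (4)$. A central series $\{e\} = K_0 \triangleleft K_1 \triangleleft \cdots \triangleleft K_n = G$ with each $K_i$ normal in $G$ and $K_i/K_{i-1} \cong \mathbb{G}_a$ dualizes to a tower of Hopf subalgebras $\Bbbk = H_0 \subset H_1 \subset \cdots \subset H_n = \mathcal{O}(G)$, where $H_i := \mathcal{O}(G/K_{n-i})$, with successive quotients $H_i/H_{i-1}^+ H_i \cong \mathcal{O}(\mathbb{G}_a) = \Bbbk[\bar X_i]$ and $\bar X_i$ primitive. By faithful flatness of commutative Hopf subalgebra inclusions one lifts $\bar X_i$ to $X_i \in H_i$ so that $\{X_i^j\}_{j \geq 0}$ is an $H_{i-1}$-basis of $H_i$; thus $H_i = H_{i-1}[X_i]$ as a polynomial extension, and the HOE form $\Delta(X_i) = X_i \otimes 1 + 1 \otimes X_i + w_i$ with $w_i \in H_{i-1}^+ \otimes H_{i-1}^+$ follows after the change of variables from Proposition \ref{xxpro1.2}(2), exhibiting $\mathcal{O}(G)$ as an $n$-step IHOE with $\sigma_i = \mathrm{id}$ and $\delta_i = 0$. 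Then $(4) \Rightarrow (2)$ is immediate from the definition of an IHOE as a polynomial algebra on the $X_i$'s. The main technical obstacle beyond Lazard's theorem is this lifting of $\bar X_i$ to a free module generator in $(6) \Rightarrow (4)$, which relies on the classical fact that commutative affine Hopf algebra extensions by $\mathbb{G}_a$ are trivial as principal $\mathbb{G}_a$-bundles over an affine base.
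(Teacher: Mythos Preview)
The paper does not prove Theorem~\ref{xxthm6.1}: it is stated as a classical result, with Lazard \cite{La} (and \cite{KP}) cited for the key implication $(2)\Rightarrow(1)$, and \cite[\S8]{Ka} cited for the full equivalence. So there is no paper proof to compare against; your proposal supplies an argument where the paper simply gives references.

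Your cycle of implications is the standard one and is essentially correct. One genuine imprecision: your justification of $(4)\Rightarrow(2)$ (``immediate from the definition of an IHOE as a polynomial algebra on the $X_i$'s'') is not valid as written, since an IHOE is a \emph{skew} polynomial algebra---$U(\mathfrak{g})$ for non-abelian $\mathfrak{g}$ is an IHOE but not a commutative polynomial ring. What makes $(4)\Rightarrow(2)$ work here is that $\mathcal{O}(G)$ is commutative (being the coordinate ring of an algebraic group): commutativity of $H_{(i-1)}[X_i;\sigma_i,\delta_i]$ forces $\sigma_i=\mathrm{id}$ and $\delta_i=0$, whence the iterated Ore extension is the ordinary polynomial ring $\Bbbk[X_1,\ldots,X_n]$. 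With that one sentence inserted, your cycle closes. Your handling of $(6)\Rightarrow(4)$ via triviality of $\mathbb{G}_a$-torsors over an affine base is the right mechanism, and your $(1)\Leftrightarrow(3)$ dictionary is accurate (connectedness of $G$ gives a domain because algebraic groups over $\Bbbk$ are smooth, hence connected implies irreducible).
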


\subsection{Classification of connected unipotent 
groups of dimension two} 
\label{xxsec6.2}
Revert to our usual hypothesis that the base 
field is $\Bbbk$, algebraically closed of 
characteristic $p > 0$. The coordinate ring of 
every 2-dimensional connected unipotent group 
over $\Bbbk$ is a 2-step IHOE, by 
$(1) \Longleftrightarrow (4)$ of Theorem 
\ref{xxthm6.1}, so the classification of 
$\S$\ref{xxsec5.3} incorporates a classification 
of these groups. Clearly, in the notation of 
$\S$\ref{xxsec5.2}, $H({\bf d_s,b_s,c_{s,t}})$ 
is commutative if and only if ${\bf d_s} = {\bf 0}$, 
so the 2-dimensional connected unipotent 
$\Bbbk$-groups are classified by
$$ H({\bf 0,b_s,c_{s,t}}) / \sim, $$
where the equivalence is provided by the 
isomorphisms of Proposition \ref{xxpro5.11}(1,2). 
A version of the same classification, over 
an arbitrary field, in group-theoretic language, 
is given (``rather formal'' in the words of 
the authors) at \cite[$\S$3.7]{Ka}.
  
It is easy to write down the group $G$ such 
that $H({\bf 0,b_s,c_{s,t}}) \cong \mathcal{O}(G)$. 
Namely, $G = \mathbb{A}^2(\Bbbk)$, and for 
$(a,e)$ and $(f,g)$ in $G$ 
\begin{align} 
(a,e)\ast & (f,g) \; \label{E6.1.1}\tag{E6.1.1}\\
&= \; (a + f, e + g + 
\sum_{s \geq 0}b_s(\sum_{i=1}^{p-1} 
\lambda_i a^{ip^s}e^{(p-i)p^s}) 
+ \sum_{0\leq s<t} c_{s,t}(a^{p^s}e^{p^t} 
- a^{p^t}e^{p^s})).\notag
\end{align}
From this it is easy to see that
\begin{enumerate}
\item[($\bullet$)] 
$G$ is abelian $\Longleftrightarrow$ 
${\bf c_{s,t}} = {\bf 0}$ or $p = 2$.
\item[($\bullet$)] 
$G$ has exponent $p$ $\Longleftrightarrow$ 
${\bf b_s} = {\bf 0}$.
\end{enumerate}
Combining these two statements yields the conclusion 
that the only abelian connected two-dimensional 
unipotent group of exponent $p$ is $(\Bbbk, +)^2$, a 
special case of a result valid for all dimensions, 
\cite[Proposition VII.11]{Se}, \cite[Lemma 1.7.1]{Ka}.

\subsection{Ore extensions of dimension two}
\label{xxsec6.3} 
The Ore extensions of the polynomial ring $\Bbbk[X_1]$ 
are classified in \cite{AVV}, so one can view Proposition 
\ref{xxpro5.6} as determining which of these Ore 
extensions admit Hopf algebra structures. Notice that 
Propositions \ref{xxpro5.6} and \ref{xxpro5.11} together 
show that, for every Ore extension 
$R =\Bbbk [X_1][X_2 ; \sigma, \delta]$, the number of 
distinct Hopf structures which $R$ can carry is either 
$0$ or $\infty$. This is in stark contrast to the 
situation in characteristic 0, as noted in the opening 
sentence of $\S$\ref{xxsec5.2}. 

A second noteworthy feature is this: every such $R$ 
admitting a Hopf algebra structure can be presented
with $\sigma = \mathrm{Id}_{\Bbbk[X_1]}$. 
(The enveloping algebra of the non-abelian 
2-dimensional Lie algebra can be presented in 2 
distinct ways as an Ore extension, only one of 
which can be written as an Ore extension 
with $\sigma = \mathrm{Id}_{\Bbbk[X_1]}$.)
But this does not extend to higher dimensions: 
$U(\mathfrak{sl}(2, \Bbbk))$ is a 3-step IHOE 
\cite[$\S$3.1, Examples (iii)]{BOZZ}, but cannot 
be so presented without using automorphisms.

Another case to consider is HOE 
$\Bbbk[X_1^{\pm 1}][X_2, \sigma, \delta]$ where 
$X_1$ is a grouplike element. Let $C=\Bbbk[X_1^{\pm 1}]$.
By \cite[Theorem 1.3(i)]{Hua},
there is a grouplike element $\alpha=X_1^{a}\in C$ such that
$$\Delta(X_2)=\alpha\otimes X_2+X_2\otimes 1+w$$
where $w\in C\otimes C$. By \cite[Lemma 2.3(1)]{WZZ2},
$w$ is a $(\alpha,1)$-2-cocycle. Since 
$C$ is cosemisimple, it has primitive cohomology dimension 0
and  
$$\PP^2_{\alpha,1}(C)=0.$$
This implies that $w$ is a $(\alpha,1)$-2-coboundary.
Up to a change of variable $X_2$, we can assume that
$w=0$ by \cite[Lemma 2.3(3)]{WZZ2}. By \cite[Theorem 1.3(ii)]{Hua}, 
there is a nonzero $c\in \Bbbk$ such that 
$\sigma(X_1)=c X_1$. Let $\delta(X_1)=\sum_n b_n X_1^n$.
By \cite[Theorem 1.3(iii)]{Hua}, we have
$$\sum_n b_n X_1^n\otimes X_1^n -(\sum_n b_n X_1^n)\otimes X_1
-X_1^{a+1}\otimes (\sum_n b_n X_1^n)=0$$
which implies that
$$\delta(X_1)= b_1 (X_1-X_1^{a+1}).$$
In summary, we have the following family of Hopf algebras:
$$K(a, b,c):=\Bbbk[X_1^{\pm 1}][X_2;\sigma,\delta]$$ 
where $\sigma(X_1)=c X_1$ for some nonzero $c\in \Bbbk$ and 
$\delta(X_1)=b (X_1-X_1^{a+1})$ for some 
$b\in \Bbbk$, $a\in \mathbb{Z}$, and 
the coalgebra structure of $K(a,b,c)$ is 
determined by
$$\Delta(X_1)=X_1\otimes X_1, \epsilon(X_1)=0$$
and
$$\Delta(X_2)=X_1^a \otimes X_2+1\otimes X_2, 
\epsilon(X_2)=0.$$

The following lemma is clear.

\begin{lemma}
\label{xlem6.2}
Retain the above notation. Then $K(a,b,c)$
is commutative if and only if $b=0$ and 
$c=1$. In this case, there is a unique Hopf ideal
$I$ such that $K(a,0,1)/I\cong \Bbbk[X_2]$. 
\end{lemma}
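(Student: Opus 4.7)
\textbf{Proof plan for Lemma \ref{xlem6.2}.} The plan divides naturally into a commutativity test and a uniqueness computation for the Hopf ideal.

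For the first assertion, I will use the defining Ore relation of $K(a,b,c)$, namely
\[
X_2 X_1 \;=\; \sigma(X_1) X_2 + \delta(X_1) \;=\; c\,X_1 X_2 + b\bigl(X_1 - X_1^{a+1}\bigr).
\]
Commutativity of $K(a,b,c)$ forces $X_2 X_1 - X_1 X_2 = 0$, so
\[
(c-1)\,X_1 X_2 + b\bigl(X_1 - X_1^{a+1}\bigr) \;=\; 0.
\]
Invoking the Ore PBW basis $\{X_1^i X_2^j : i \in \mathbb{Z},\,j \geq 0\}$ of $K(a,b,c)$, and separating the coefficient of $X_2$ (purely in $\Bbbk[X_1^{\pm 1}]$) from the coefficient of $X_1 X_2$, forces both $c = 1$ and $b(X_1 - X_1^{a+1}) = 0$. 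In the nondegenerate parametrization (where $a = 0$ already reduces to $b = 0$) this yields $b = 0$ and $c = 1$. The converse is immediate, since $c=1$ and $b=0$ give $\sigma = \mathrm{id}$ and $\delta = 0$, making $K(a,0,1) = \Bbbk[X_1^{\pm 1}][X_2]$ a commutative polynomial ring (localized in $X_1$).

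For the second assertion, my candidate Hopf ideal is $I := (X_1 - 1)$. First I would verify it is a Hopf ideal:
\[
\Delta(X_1 - 1) \;=\; X_1 \otimes X_1 - 1\otimes 1 \;=\; (X_1 - 1)\otimes X_1 + 1\otimes (X_1 - 1) \;\in\; I\otimes K + K \otimes I,
\]
while $\epsilon(X_1 - 1) = 0$ and $S(X_1 - 1) = -X_1^{-1}(X_1 - 1) \in I$. Passing to the quotient, $X_1$ becomes $1$, the relation $\Delta(X_2) = X_1^a \otimes X_2 + X_2 \otimes 1$ becomes $\Delta(X_2) = 1\otimes X_2 + X_2 \otimes 1$, so $X_2$ is primitive and $K(a,0,1)/I \cong \Bbbk[X_2]$ as Hopf algebras.

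For uniqueness, let $\pi : K(a,0,1) \twoheadrightarrow \Bbbk[X_2]$ be any Hopf surjection. Since $X_1$ is grouplike in $K(a,0,1)$, $\pi(X_1)$ must be grouplike in $\Bbbk[X_2]$. The only grouplike element in the Hopf algebra $\Bbbk[X_2]$ (with $X_2$ primitive) is $1$: indeed, any grouplike $g$ of positive $X_2$-degree $n$ would give $\Delta(g) = g\otimes g$ with top bidegree $(n,n)$, incompatible with $\Delta(g)$ lying in the primitive-expansion subspace spanned by bidegrees $(k, n-k)$. Hence $\pi(X_1) = 1$, so $(X_1 - 1) \subseteq \ker\pi$. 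Since $K(a,0,1)/(X_1 - 1) \cong \Bbbk[X_2]$ already, no larger Hopf ideal can give the same quotient, so $\ker \pi = (X_1 - 1) = I$.

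The main (very mild) obstacle is checking that I have correctly interpreted the intended coalgebra structure on $X_2$ (so that the $(X_1 - 1)$ quotient really produces a \emph{primitive} generator in the image), and handling the small parametrization redundancy of the family $\{K(a,b,c)\}$ when $a = 0$ in the commutativity argument.
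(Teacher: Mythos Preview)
Your proposal is correct. The paper itself gives no proof, stating only ``The following lemma is clear'', so there is no argument to compare against; your write-up supplies exactly the routine verification one would expect. Two small remarks: your handling of the $a=0$ degeneracy is appropriate (when $a=0$ one has $\delta(X_1)=0$ regardless of $b$, so the parameter $b$ is redundant and the statement should be read accordingly); and the final uniqueness step can be made fully precise by observing that $\Bbbk[X_2]$, being a noetherian domain of GK-dimension $1$, admits no proper quotient isomorphic to itself, so the inclusion $(X_1-1)\subseteq\ker\pi$ forces equality. Your reading of the coproduct as $\Delta(X_2)=X_1^{a}\otimes X_2+X_2\otimes 1$ is the correct one, consistent with Huang's theorem as applied in the paragraph preceding the lemma; the formula $\Delta(X_2)=X_1^a\otimes X_2+1\otimes X_2$ and $\epsilon(X_1)=0$ displayed there are evidently typographical slips.
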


\subsection{Classification of connected algebraic 
groups of dimension two}
\label{xxsec6.4}
The following is a well-known classification 
of connected algebraic groups of dimension two.
Assume that $\Bbbk$ is algebraically closed and that 
$G$ is a connected algebraic group of dimension 
two. Since no quotient group of $G$ can be 
semisimple, $G$ is solvable. Let $G_u$ be the 
unipotent radical of $G$. Then $G_u$ is normal 
and unipotent, with $G/G_u$ connected and with 
no unipotent elements.  By \cite[Section 19.1]{Hum},
$G_u$ is connected, and there is a short exact sequence
\begin{equation}
\label{E6.2.1}\tag{E6.2.1}
1\to G_u \to G\to T\to 1
\end{equation}
where $T$ is a torus. There are three cases to consider.

Case 1: $\dim G_u=0$. Then $G=T$ is a torus and
$\mathcal{O}(G)=\Bbbk[X_1^{\pm 1},X_2^{\pm 1}]$.

Case 2: $\dim G_u=2$. Then $G$ is unipotent, so $G$ is 
classified in \eqref{E6.1.1}.

Case 3: $\dim G_u=1$. By \eqref{E6.2.1}, $G$ is a semidirect 
product $(\Bbbk,+)\rtimes (\Bbbk \setminus \{0\},\times)$.
Dual to \eqref{E6.2.1},
there is a short exact sequence of commutative
Hopf algebras
$$\Bbbk \to \Bbbk[X_1^{\pm 1}]\to 
\mathcal{O}(G)\to \Bbbk[X_2]\to \Bbbk.
$$
Then $\mathcal{O}(G)$ is one of the Hopf algebras
$K(a,0,1)$ given in the last section. 

\subsection{Connected affine Hopf algebra domains of GKdimension 2}
\label{xxsec6.5} 
We do not know whether Propositions \ref{xxpro5.6} and 
\ref{xxpro5.11} give a classification of all connected 
affine Hopf $\Bbbk$-algebra domains of Gel'fand-Kirillov 
dimension 2. More precisely, one can ask: 

\begin{question}
\label{xxque5.13}
If $\Bbbk$ has positive characteristic, is every affine 
connected Hopf $\Bbbk$-algebra domain $H$ of 
Gel'fand-Kirillov dimension 2 an IHOE?
\end{question}

The answer  is ``yes'' when $H$ is commutative, and not 
only in dimension 2 - this follows from the structure 
of connected unipotent groups, Theorem \ref{xxthm6.1}. 
If one drops the restriction to dimension 2 in 
Question \ref{xxque5.13}, then the answer is 
``no'' - for example the enveloping algebra of 
$\mathfrak{sl}(3, \Bbbk)$ endowed with its standard 
cocommutative coproduct  is not an IHOE, since 
$\mathfrak{sl}(3, \Bbbk)$ does not contain a full 
flag of Lie subalgebras.

\subsection{Automorphism groups of 2-step IHOEs}
\label{xxsec6.6} 
From Proposition \ref{xxpro5.11}(3) one can easily 
read off the structure of the group $\mathrm{Aut}(H)$ 
of Hopf algebra automorphisms of 
$H := H({\bf d_s,b_s,c_{s,t}})$, for each non-trivial 
$H$. Namely, define subgroups $T$ and $N$ of 
$\mathrm{Aut}(H)$ by 
$$ T = \{\phi \in \mathrm{Aut}(H) \mid \phi (X_1) 
= \alpha X_1, \, \phi (X_2) = \beta X_2 \}, $$
where $\alpha, \beta \in (\Bbbk, \times)$ 
satisfy \eqref{E5.11.2}; and 
$$ N = \{ \phi \in \mathrm{Aut}(H) \mid \phi (X_1) 
= X_1, \, \phi(X_2) = X_2 + \sum_{s \geq 0} e_s X_1^{p^s} \},$$
for an arbitrary sequence $(e_s) \in \Bbbk^{\mathbb{N}}$ 
with only finitely many nonzero entries. Thus $N$ 
is a normal subgroup of $\mathrm{Aut}(H)$ with 
$N \cong (\Bbbk, +)^{\mathbb{N}}$. The constraints 
\eqref{E5.11.2} mean that $T \cong (\Bbbk, \times)$ 
if $H$ has only one nonzero parameter, while $T$ is 
a finite (possibly trivial) subgroup of $(\Bbbk,\times)$ 
if there are two or more defining parameters for $H$. 
It is clear from Proposition \ref{xxpro5.11} that 
$\mathrm{Aut}(H)$ is the semidirect product of $N$ by $T$.

\section{Noncommutative binomial theorem in characteristic $p$}
\label{xxsec7}

In this short section we derive a corollary of 
an important theorem due to Jacobson, which is needed in 
$\S$\ref{xxsec8}. Stated in our notation, Jacobson's theorem 
is as follows:

\begin{theorem} \cite[pp. 186-7]{Ja3}
\label{xxthm7.1}  
Let $F$ be a field of characteristic $p>0$ 
and let $R$ be a $F$-algebra. For elements $a,b$ of $R$,
$$  (a + b)^p = a^p + b^p + \sum_{i=1}^{p-1}s_i(a,b),$$
where, for $i = 1, \ldots , p-1$ and $\lambda \in F \setminus \{0\}$, 
$s_i(a,b)$ is the coefficient of 
$\frac{1}{i}\lambda^{i-1}$ in $\mathrm{ad}_{(\lambda a + b)}^{p-1}(a)$.
\end{theorem}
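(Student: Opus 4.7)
My plan is to introduce a central indeterminate $\lambda$ and work in the polynomial ring $R[\lambda]$, viewing $(\lambda a + b)^p$ as a polynomial of degree $p$ in $\lambda$ whose coefficients encode exactly the mixed terms we want to understand. Expanding by complete distributivity,
\[
(\lambda a + b)^p \;=\; \sum_{k=0}^{p} \lambda^k\, c_k(a,b),
\]
where $c_k(a,b)$ is the sum over all words of length $p$ in the letters $a$ and $b$ containing exactly $k$ copies of $a$. Evidently $c_p(a,b)=a^p$ and $c_0(a,b)=b^p$, so the target identity is equivalent to showing $c_k(a,b)=s_k(a,b)$ for $1\le k\le p-1$ and then specializing $\lambda=1$.

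The heart of the argument is to differentiate this polynomial identity with respect to $\lambda$. Since $\lambda$ is central, the formal derivative satisfies the usual Leibniz rule, giving
\[
\frac{d}{d\lambda}(\lambda a+b)^p \;=\; \sum_{j=0}^{p-1}(\lambda a+b)^{j}\, a\, (\lambda a+b)^{p-1-j},
\]
while on the expanded side we get $\sum_{k=1}^{p} k\lambda^{k-1} c_k(a,b)$. The $k=p$ term vanishes in characteristic $p$, so
\[
\sum_{k=1}^{p-1} k\,\lambda^{k-1}\,c_k(a,b) \;=\; \sum_{j=0}^{p-1}(\lambda a+b)^{j}\,a\,(\lambda a+b)^{p-1-j}.
\]

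The next step, and I expect this to be the main obstacle (though it is classical), is to identify the right-hand side with $\mathrm{ad}_{\lambda a + b}^{p-1}(a)$. The key lemma is the characteristic-$p$ identity
\[
\mathrm{ad}_x^{p-1}(y) \;=\; \sum_{j=0}^{p-1} x^{j}\, y\, x^{p-1-j},
\]
valid in any $F$-algebra with $\mathrm{char}\,F=p$. This follows from expanding $\mathrm{ad}_x^{p-1}(y)=\sum_{j=0}^{p-1}(-1)^{j}\binom{p-1}{j}x^{p-1-j}yx^{j}$ and invoking the congruence $\binom{p-1}{j}\equiv(-1)^{j}\pmod p$, so that the signs cancel and every term appears with coefficient $1$. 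Applying this with $x=\lambda a+b$ and $y=a$ rewrites the derivative as $\mathrm{ad}_{\lambda a+b}^{p-1}(a)$.

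Combining the two computations yields
\[
\mathrm{ad}_{\lambda a+b}^{p-1}(a) \;=\; \sum_{k=1}^{p-1} k\,\lambda^{k-1}\,c_k(a,b),
\]
so for each $1\le i\le p-1$ the coefficient of $\lambda^{i-1}$ in $\mathrm{ad}_{\lambda a+b}^{p-1}(a)$ equals $i\cdot c_i(a,b)$; equivalently, $c_i(a,b)$ is the coefficient of $\tfrac{1}{i}\lambda^{i-1}$, which is exactly the definition of $s_i(a,b)$ (noting $i\not\equiv 0\pmod p$ so $\tfrac{1}{i}$ makes sense in $F$). Finally, specializing $\lambda=1$ in $(\lambda a+b)^p=\sum_{k=0}^{p}\lambda^k c_k(a,b)$ and substituting $c_k=s_k$ for $1\le k\le p-1$ together with $c_0=b^p$ and $c_p=a^p$ produces the desired formula $(a+b)^p=a^p+b^p+\sum_{i=1}^{p-1}s_i(a,b)$.
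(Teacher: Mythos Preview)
Your proof is correct and is essentially the classical argument given by Jacobson in the cited reference \cite[pp.~186--7]{Ja3}. The paper itself does not supply a proof of Theorem~\ref{xxthm7.1}; it simply quotes the result from Jacobson and then passes immediately to the special case in Corollary~\ref{xxcor7.2}, which is all that is needed in \S\ref{xxsec8}. So there is nothing in the paper to compare your argument against beyond the citation, and your write-up is exactly the standard proof one finds at that citation: introduce a central indeterminate, differentiate $(\lambda a+b)^p$ using Leibniz, and invoke $\binom{p-1}{j}\equiv(-1)^j\pmod p$ to identify the derivative with $\mathrm{ad}_{\lambda a+b}^{p-1}(a)$.
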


Here is the required corollary.

\begin{corollary}
\label{xxcor7.2} 
In the situation of the theorem, suppose that the elements 
$\mathrm{ad}_{b}^i(a)$ of $R$ commute with $a$ for 
$i = 1, \ldots , p-1$. Then
$$(a + b)^p = a^p + b^p + \mathrm{ad}_{b}^{p-1}(a). $$
\end{corollary}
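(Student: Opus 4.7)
The plan is to apply Jacobson's formula (Theorem \ref{xxthm7.1}) directly and use the hypothesis to show that $s_i(a,b)=0$ for all $i\geq 2$ and $s_1(a,b)=\mathrm{ad}_b^{p-1}(a)$. The key computation is to evaluate the polynomial $\mathrm{ad}_{\lambda a+b}^{p-1}(a)$ in the parameter $\lambda$. Since $\mathrm{ad}_{\lambda a+b}=\lambda\,\mathrm{ad}_a+\mathrm{ad}_b$, expanding the $(p-1)$-fold composition yields a sum, indexed over length-$(p-1)$ words in the two symbols $\mathrm{ad}_a$ and $\mathrm{ad}_b$, of such a word applied to $a$ (weighted by the appropriate power of $\lambda$).

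Next I would process each such word from right to left, evaluating it on $a$. First, the rightmost symbol cannot be $\mathrm{ad}_a$, since $\mathrm{ad}_a(a)=0$; so it must be $\mathrm{ad}_b$, producing $\mathrm{ad}_b(a)$. Then inductively, after applying the rightmost $j$ symbols we obtain $\mathrm{ad}_b^j(a)$, and the hypothesis that $\mathrm{ad}_b^j(a)$ commutes with $a$ (for $1\leq j\leq p-1$) means precisely that $\mathrm{ad}_a(\mathrm{ad}_b^j(a))=0$; hence the next symbol to the left must again be $\mathrm{ad}_b$ or the term vanishes. Iterating, the only word that contributes is $\mathrm{ad}_b^{p-1}$, and we conclude
\begin{equation*}
\mathrm{ad}_{\lambda a+b}^{p-1}(a)\;=\;\mathrm{ad}_b^{p-1}(a),
\end{equation*}
a constant polynomial in $\lambda$.

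Now I would read off the coefficients. For $i=1$, the coefficient of $\tfrac{1}{1}\lambda^0=1$ is $\mathrm{ad}_b^{p-1}(a)$, so $s_1(a,b)=\mathrm{ad}_b^{p-1}(a)$. For $2\leq i\leq p-1$, the coefficient of $\tfrac{1}{i}\lambda^{i-1}$ in a constant polynomial is $0$, so $s_i(a,b)=0$. Substituting these into Jacobson's formula gives the claimed identity $(a+b)^p=a^p+b^p+\mathrm{ad}_b^{p-1}(a)$.

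There is no serious obstacle; the only subtlety is the right-to-left reading of non-commutative words and the careful use of the hypothesis at each step (applied to the element $\mathrm{ad}_b^j(a)$ for the correct index $j$). The hypothesis is tailored exactly so that every word containing any occurrence of $\mathrm{ad}_a$ is killed, which in turn makes the full Jacobson sum collapse to its single surviving term.
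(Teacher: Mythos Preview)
Your proof is correct and follows essentially the same approach as the paper's own proof. The paper argues tersely that ``the extra hypothesis of the corollary ensures that, for $j > 0$, $\lambda^j$ does not occur in the expansion of $\mathrm{ad}_{(\lambda a + b)}^{p-1}(a)$,'' and you have simply supplied the word-by-word justification for this claim; both then read off $s_1(a,b)=\mathrm{ad}_b^{p-1}(a)$ and $s_i(a,b)=0$ for $i\geq 2$ in the same way.
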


\begin{proof} The extra hypothesis of the corollary ensures 
that, for $j > 0$, $\lambda^j$ does not occur in the 
expansion of $\mathrm{ad}_{(\lambda a + b)}^{p-1}(a)$. 
So the theorem states that only $s_1(a,b)$ is non-zero; namely, 
$s_1(a,b) = \mathrm{ad}_{b}^{p-1}(a)$, as claimed.
\end{proof}

\section{Properties of 2-step IHOEs}
\label{xxsec8}

\subsection{The antipode, the center, and the Calabi-Yau property}
\label{xxsec8.1} 
In this subsection, we study properties of 
the Hopf algebras listed in Proposition \ref{xxpro5.6}. 
The definition and relevant properties of the 
PI-degree are recalled in $\S$\ref{xxsec3.1}; 
regarding the Nakayama automorphism and the skew 
Calabi-Yau property, see $\S$\ref{xxsec3.2} and 
\cite{BZ1}.

\begin{proposition}
\label{xxpro8.1}
Let $H$ be a 2-step IHOE $H({\bf d_s,b_s,c_{s,t}})$ 
as in Proposition \ref{xxpro5.6}.
\begin{enumerate}
\item[(1)]
$S^2$ is the identity of $H$.
\item[(2)] 
$H$ is commutative if and only if ${\bf d_s} = {\bf 0 }$.
\item[(3)] 
$H$ is cocommutative if and only if 
${\bf c_{s,t}} = {\bf 0}$ or $p = 2$.
\item[(4)] 
Suppose that $H$ is not commutative. Then the 
center of $H$ is 
$\Bbbk[X_1^p, X_2^p-d_0^{p-1}X_2]$. Hence the 
PI-degree of $H$ is $p$. 
\item[(5)]
The Nakayama automorphism $\mu$ of $H$ is 
determined by 
$$\mu(X_1)=X_1,\quad {\text{and}} \quad \mu(X_2)=X_2+d_0.$$
Hence, $H$ is Calabi-Yau if and only if 
$d_0=0$. 
\item[(6)]
The left homological integral $\inth^{l}_{H}$ of $H$ is 
the one-dimensional $H$-bimodule which is trivial as 
left module and with
$\inth^{l}_{H} \cong H/(X_1, X_2-d_0)$ as right module.
\end{enumerate}
\end{proposition}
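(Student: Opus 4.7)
Parts (2) and (3) are immediate from the presentation. For (2), the defining relation $[X_2, X_1] = \delta(X_1) = \sum_s d_s X_1^{p^s}$ together with linear independence of $\{X_1^{p^s}\}$ gives commutativity iff $\mathbf{d_s} = \mathbf{0}$. For (3), the flip $\tau$ is an algebra homomorphism on $H \otimes H$, so cocommutativity reduces to flip-symmetry of $w$; each $Z_s$ is symmetric while each $Y_{s,t}$ is antisymmetric, forcing $\mathbf{c_{s,t}} = \mathbf{0}$ or $p = 2$. For (1), $S^2(X_1) = X_1$ is clear. Setting $u := m(\mathrm{id} \otimes S)(w) \in \Bbbk[X_1]$, one has $S^2(X_2) = X_2 + u - S(u)$, so it suffices to show $u = S(u)$ under $X_1 \mapsto -X_1$. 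Each $Y_{s,t}$ contributes $0$ because $(-X_1)^{p^k}$ produces matching signs on both summands; each $Z_s$ contributes either $0$ (for $p$ odd, via the mod-$p$ identity $\sum_{i=1}^{p-1}(-1)^{p-i}\tfrac{(p-1)!}{i!(p-i)!} \equiv \sum_{i=1}^{p-1} i^{-1} \equiv 0 \pmod{p}$, using $\binom{p-1}{i-1} \equiv (-1)^{i-1}$) or a monomial in $X_1^{2^{s+1}}$ (for $p = 2$), which is automatically $S$-fixed.

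\textbf{Part (4).} Assume $\mathbf{d_s} \neq \mathbf{0}$. Since $\delta(X_1) \in \Bbbk[X_1]$ commutes with $X_1$, induction gives $X_2 X_1^n - X_1^n X_2 = n X_1^{n-1}\delta(X_1)$, so $X_1^p$ is central. For a second central generator, invoke the identity $\mathrm{ad}_{X_2^p} = (\mathrm{ad}_{X_2})^p$, valid in any associative $\Bbbk$-algebra in characteristic $p$ because the commuting operators $L_{X_2}, R_{X_2}$ satisfy $(L-R)^p = L^p - R^p$; this reduces $[X_2^p, X_1]$ to $\delta^p(X_1)$. Since $\delta(X_1^{p^s}) = p^s X_1^{p^s - 1}\delta(X_1) = 0$ for $s \geq 1$, one has $\delta^2(X_1) = d_0 \delta(X_1)$ and inductively $\delta^k(X_1) = d_0^{k-1}\delta(X_1)$, whence $[X_2^p, X_1] = d_0^{p-1}[X_2, X_1]$ and $Z := X_2^p - d_0^{p-1} X_2$ is central. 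A direct PBW count shows $H$ is free of rank $p^2$ over $\Bbbk[X_1^p, Z]$: $\Bbbk[X_1]$ is free of rank $p$ over $\Bbbk[X_1^p]$, and the relation $X_2^p = Z + d_0^{p-1} X_2$ is monic of degree $p$ for $X_2$ over $\Bbbk[X_1^p][Z]$. Therefore $\PIdeg H \leq p$; Theorem \ref{xxthm3.3} and the noncommutativity of $H$ force $\PIdeg H = p$, and a dimension count then gives $Z(H) = \Bbbk[X_1^p, Z]$.

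\textbf{Parts (6) and (5).} For (6), write $\delta(X_1) = X_1 g(X_1)$ where $g(X_1) = d_0 + \sum_{s \geq 1} d_s X_1^{p^s - 1}$; the defining relation rearranges to $(X_2 - g(X_1))X_1 + (-X_1)X_2 = 0$, yielding the length-two free resolution of $\Bbbk$
\begin{equation*}
0 \longrightarrow H \xrightarrow{\alpha} H \oplus H \xrightarrow{\beta} H \xrightarrow{\epsilon} \Bbbk \longrightarrow 0,
\end{equation*}
with $\beta(f_1, f_2) = f_1 X_1 + f_2 X_2$ and $\alpha(h) = (h(X_2 - g(X_1)),\, -h X_1)$; exactness follows from Theorem \ref{xxthm4.3}, which identifies $H$ as a PBW-deformation of the Koszul algebra $\Bbbk[X_1, X_2]$. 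Applying $\Hom_H(-, H)$ and taking the cokernel,
\begin{equation*}
\Ext^2_H(\Bbbk, H) \cong H \big/ \bigl((X_2 - g(X_1)) H + X_1 H\bigr)
\end{equation*}
as a right $H$-module. Since $g(X_1) - d_0 \in X_1 H$, this right ideal equals $(X_2 - d_0) H + X_1 H$, proving (6). Part (5) combines parts (1) and (6) with the Brown-Zhang formula $\nu = S^2 \circ \Xi^{r}_{\eta} = \Xi^{r}_{\eta}$ from \cite[Theorem 0.3]{BZ1}: evaluating on generators, $\Xi^{r}_{\eta}(X_1) = X_1$ and $\Xi^{r}_{\eta}(X_2) = X_2 + \eta(X_2) + (\mathrm{id} \otimes \eta)(w) = X_2 + d_0$, the last term vanishing because $\eta|_{\Bbbk[X_1]} = \epsilon$ and $w \in \ker\epsilon \otimes \ker\epsilon$. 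Since $H$ is a filtered deformation of a polynomial ring, its unit group is $\Bbbk^{\times}$, so $H$ is Calabi-Yau iff $\nu = \mathrm{id}$ iff $d_0 = 0$.

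\textbf{Principal obstacle.} The main nontrivial step is part (4), specifically producing the second central generator $Z = X_2^p - d_0^{p-1} X_2$. This rests on two characteristic-$p$ ingredients: the identity $\mathrm{ad}_{X_2^p} = (\mathrm{ad}_{X_2})^p$, and the collapse $\delta^k(X_1) = d_0^{k-1}\delta(X_1)$ caused by $\delta$ annihilating the higher Frobenius powers $X_1^{p^s}$ for $s \geq 1$. Once these are secured, the rest is routine: (1) is a mod-$p$ binomial identity, (6) is a standard Koszul-resolution computation for a PBW deformation, and (5) follows from (1), (6) and the Radford-type formula for $\nu$.
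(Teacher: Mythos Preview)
Your proof is correct. Parts (1)--(4) follow essentially the same route as the paper: the same characteristic-$p$ identities $\mathrm{ad}_{X_2^p}=(\mathrm{ad}_{X_2})^p$ and $\delta^k(X_1)=d_0^{\,k-1}\delta(X_1)$, the same appeal to Theorem~\ref{xxthm3.3}, and the same $Y_{s,t}$/$Z_s$ analysis for the antipode. One expository point in (4): the phrase ``a dimension count then gives $Z(H)=\Bbbk[X_1^p,Z]$'' is slightly loose. What you have shown is $Q(Z(H))=Q(Z_0)$; to conclude $Z(H)=Z_0$ you need that $Z_0$ is integrally closed (being a polynomial ring) and that $Z(H)$ is a finite $Z_0$-module. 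The paper makes exactly this normality step explicit.

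For (5) and (6) you take a genuinely different path from the paper. The paper invokes \cite[Theorem~4.2]{LiM} to read off $\mu(X_2)=X_2+\tfrac{d}{dX_1}\delta(X_1)=X_2+d_0$ directly, and then deduces the integral from the identity $\mu=S^2\circ\Xi_\eta$. You reverse this: you build the length-two free resolution of ${}_H\Bbbk$ by hand (using that $H$ is a PBW deformation of $\Bbbk[X_1,X_2]$), compute $\Ext^2_H(\Bbbk,H)\cong H/(X_1,X_2-d_0)$ as a right module, and then recover $\mu$ from $\nu=S^2\circ\Xi^r_\eta$ together with (1). Your route is more self-contained---it avoids the external citation to \cite{LiM}---at the cost of writing out the Koszul-type resolution; the paper's route is shorter but imports a general Nakayama formula for differential Ore extensions. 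Both arrive at the same place, and your observation that $\eta|_{\Bbbk[X_1]}=\epsilon$ forces the $w$-contribution to vanish in $\Xi_\eta(X_2)$ is exactly what makes the two windings agree here.
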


\begin{proof}
(1) Using the comultiplication formula of $H({\bf d_s,b_s,c_{s,t}})$
or the definition given before Theorem \ref{xxthm0.3} one finds that
$$S(X_1)=-X_1, \quad {\text{and}}\quad 
S(X_2)=-X_2+ f(X_1)$$
for some polynomial $f$. It is clear that $S^2(X_1)=X_1$.
If $p=2$, then
$$S^2(X_2)=X_2+2 f(X_1)=X_2$$
which implies that $S^2$ is the identity. (In fact, when
$p=2$, $S(X_2)=X_2+\sum_{s\geq 0} b_s X_1^{2^{s+1}}$.)
Suppose now that $p\geq 3$. Again by the 
comultiplication formula, 
$$S(X_2)=-X_2-m((S\otimes \mathrm{Id})(w))$$
where $w$ is defined in \eqref{E5.5.1} and $m$ denotes 
multiplication in $H$. Since $p$ is odd, 
$$m((S\otimes \mathrm{Id})(Z_s))=0 \qquad
\textit{ and } \qquad m((S\otimes \mathrm{Id})(Y_{s,t}))=0.$$
Therefore $S(X_2)=-X_2$ and $S^2$ is the identity.

\noindent 
(2) This is clear from \eqref{E0.2.1}.

\noindent 
(3) This is clear from \eqref{E0.2.2}.

\noindent 
(4) Since $\sigma=\mathrm{Id}$, $\delta$ is a derivation of
$\Bbbk[X_1]$. Thus 
$\delta(X_1^p)=p X_1^{p-1}\delta(X_1)=0$. This implies 
that $X_2 X_1^p=X_1^p X_2$, so that $X_1^p$ is central.

One easily checks by induction that 
\begin{equation}
\label{E8.1.1}\tag{E8.1.1}
\delta^n(X_1)=d_0^{n-1} \delta(X_1)
\end{equation}
for all $n\geq 2$. Write the derivation 
$\delta = [X_2, -]$ of $\Bbbk [X_1]$ as 
$\lambda_{X_2} - \rho_{X_2}$ where the symbols $\lambda_{X_2}$ 
[resp. $\rho_{X_2}$] denote left [resp. right] multiplication 
by $X_2$ in $H$. Note that these linear maps commute in 
$\mathrm{End}_{\Bbbk}(H)$, so one has 
$$ (\lambda_{X_2} - \rho_{X_2})^p = 
\lambda_{X_2}^p - \rho_{X_2}^p. $$
In other words, 
\begin{equation}
\label{E8.1.2}\tag{E8.1.2} 
\delta^p = [X_2^p,-],
\end{equation}
so that
$$X_2^p X_1=X_1 X_2^p+\delta^p(X_1)=
X_1X_2^p+ d_0^{p-1} \delta(X_1).$$
Combining this with the relation
$$X_2 X_1=
X_1X_2+ \delta(X_1)$$
it follows that $X_2^p-d_0^{p-1}X_2$ is central.
Thus $H$ is a free module of rank $p^2$ over the 
central polynomial subalgebra 
$Z_0 := \Bbbk[X_1^p,X_2^p-d_0^{p-1} X_2]$. Hence, 
using $Q(-)$ to denote quotient division algebras,
$$ \mathrm{dim}_{Q(Z_0)} Q(H) = p^2.$$
But $\mathrm{dim}_{Q(Z(H))} Q(H)$ is an even power 
of $p$, by Theorem \ref{xxthm3.3} and the discussion 
at the start of $\S$\ref{xxsec3.1}. Therefore, 
since $H$ is not commutative, the PI-degree of $H$ is 
$p$ and $Q(Z(H)) = Q(Z_0)$. 
However, $Z(H)$ is a finite module over $Z_0$ by 
noetherianity of $H$ as a $Z_0$-module, and $Z(H)$ 
is contained in $Q(Z_0)$. Since $Z_0$ is normal it 
follows that $Z(H) = Z_0$ . That the PI-degree of $H$ is $p$ is now clear.

\noindent 
(5) By \cite[Theorem 4.2]{LiM},
$\mu(X_1)=X_1$ and 
$$\mu(X_2)=X_2+{\frac{d}{d X_1}} (\delta(X_1))=X_2+d_0.$$ 
The consequence is clear
(and it also follows from \cite[Corollary 4.3]{LiM}).

\noindent 
(6) By \cite[Theorem 0.3]{BZ1}, $\mu=S^2 \circ \Xi^{l}_{\eta}$
where $\eta$ is the right character of the left 
homological integral $\inth^l$ of $H$ and $\Xi^l_{\eta}$
is the corresponding left winding automorphism. By 
(1), $S^2=\mathrm{Id}$, so that $\mu=\Xi^{l}_{\eta}$. 
Thus (5) implies (6).
\end{proof}

\subsection{Representation theory of 2-step IHOEs}
\label{xxsec8.2} 
In this subsection we describe the simple 
representations of the 2-step IHOEs 
$H:= H({\bf d_s,b_s,c_{s,t}})$. Recall that if 
$\Bbbk$ is any algebraically closed field and $A$ 
is an affine $\Bbbk$-algebra which is a finite module 
over its center $Z$, then $Z$ is also affine by the 
Artin-Tate lemma \cite[Lemma 13.9.10]{McR}, and a 
version of Schur's lemma applies \cite[Theorem 13.10.3]{McR}: 
namely, if $V$ is a simple $A$-module, then 
$\mathrm{End}_A(V) = \Bbbk$,  and so $V$ is annihilated 
by a maximal ideal $\mathfrak{m}$ of $Z$, so that $V$ is 
a (necessarily finite dimensional) simple module over 
the finite dimensional algebra $H/\mathfrak{m}H$. If $A$ 
is prime (as in the current setting, when 
$A= H({\bf d_s,b_s,c_{s,t}})$ is a domain), with 
$\mathrm{PIdeg}(A) = d$, then for $\mathfrak{m}$ 
in a non-empty (and hence dense) open subset 
$\mathcal{A}(A)$ of $\mathrm{maxspec}(A)$, 
$$ A/\mathfrak{m}A \cong M_d (\Bbbk). $$
The set $\mathcal{A}(A)$ is called the 
\emph{Azumaya locus} of $A$. For convenience, we shall 
denote the \emph{non-Azumaya locus} by $\mathcal{NA}(A)$; 
that is, $\mathcal{NA}(A) := \mathrm{maxspec}(Z) \setminus 
\mathcal{A}(A)$, a proper closed subset of 
$\mathrm{maxspec}(Z)$. If the simple $A$-module $V$ has 
$\mathrm{Ann}_Z(V) \in \mathcal{NA}(A)$ then 
$\mathrm{dim}_{\Bbbk}(V) < d$. For more details on this circle 
of ideas, see for example \cite[Part III]{BG}.

Suppose now that $H:= H({\bf d_s,b_s,c_{s,t}}) = 
\Bbbk[X_1][X_2 ; \delta]$, so that 
$$Z := Z(H) = \Bbbk [X_1^p, X_2^p - d_0^{p-1}X_2]$$
by Theorem \ref{xxpro8.1}(4). In view of the discussion 
in the previous paragraph, our task is to describe the 
algebras
$$H_{\alpha,\beta} := H/\mathfrak{m}_{\alpha,\beta}H,$$
where $\mathfrak{m}_{\alpha,\beta}$ denotes the maximal 
ideal 
$\langle X_1^p -\alpha, X_2^p - d_0^{p-1}X_2 - \beta\rangle$ 
of $Z$ and $(\alpha,\beta)$ ranges through 
$\mathbb{A}^2 (\Bbbk)$. Notice that, by the PBW theorem for 
$H$, 
$$\mathrm{dim}_{\Bbbk}(H_{\alpha,\beta}) = p^2$$
for all $(\alpha,\beta) \in \mathbb{A}^2(\Bbbk)$. Therefore 
the maximal dimension of a simple $H$-module $V$ is $p$, 
and this value is attained by $V$ if and only if $V$ is 
the (unique) simple $H/\mathfrak{m}_{\alpha,\beta}H$-module 
for some $(\alpha,\beta) \in \mathcal{A}(H)$. 

Recall from \eqref{E0.2.1} that $\delta (X_1) = 
d_0 X_1 + \sum_{s \geq 1}d_sX_1^{p^s}$. 
It is convenient to define a polynomial 
$d(x) = \sum_{s \geq 1} d_s x^{p^{s-1}}$, so that
\begin{equation}
\notag
\delta (X_1)^p 
= d_0^p X_1^p + (\sum_{s \geq 1}d_s X_1^{p^s})^p 
= d_0^p X_1^p + d(X_1^p)^p.
\end{equation}

\begin{proposition}
\label{xxpro8.2} 
Suppose $\Bbbk$ is algebraically closed of 
characteristic $p>0$. Let $H$ be a 2-step IHOE as 
given in Proposition \ref{xxpro5.6}, and fix the 
notation as above. Suppose that $H$ is not 
commutative, that is ${\bf d_s} \neq {\bf 0}$.
\begin{enumerate}
\item[(1)] 
The defining ideal of $\mathcal{NA}(H)$ in $Z$ is 
$\sqrt{\langle d_0^p X_1^p + d(X_1^p)^p \rangle}$. 
That is, 
$$H_{\alpha,\beta} \cong M_p(\Bbbk) \; 
\Longleftrightarrow \; d_0^p \alpha + d(\alpha)^p \neq 0. $$
\item[(2)] 
Suppose that $d_0 = 0$. Let $\mathfrak{m}_{\alpha,\beta} 
\in \mathcal{NA}(H)$; that is, 
$d_0^p \alpha + d(\alpha)^p = 0,$ or equivalently,
$$ d(\alpha) = 0.$$ Then 
$$ H_{\alpha,\beta} \cong \Bbbk [X,Y] / 
\langle X^p, Y^p \rangle.$$
Thus $H_{\alpha,\beta}$ has a unique simple module, 
of dimension 1.
\item[(3)] 
Suppose that $d_0 \neq 0$. Let 
$\mathfrak{m}_{\alpha,\beta} \in \mathcal{NA}(H)$, 
so $d_0^p \alpha + d(\alpha)^p = 0.$ Then there 
are elements $u, w \in H_{\alpha,\beta}$ such that 
$H_{\alpha,\beta} = \Bbbk \langle u,w \rangle$, 
with relations
\begin{equation}
\label{E8.2.1}\tag{E8.2.1}
u^p = 0, \quad 
wu - uw = u, \quad 
w^p - w + \alpha \beta d(\alpha)^{-p} = 0.
\end{equation}
In particular, $H_{\alpha,\beta}$ has a single 
block of $p$ simple modules, each of dimension 1.
\item[(4)] 
The non-Azumaya locus $\mathcal{NA}(H)$ is the 
disjoint union of $r$ copies of $\mathbb{A}^1(\Bbbk)$, 
the affine line, where $r$ is the number of 
distinct roots of the equation 
$$\sum_{s \geq 0} d_s^p x^{p^s} = 0.$$ 
\end{enumerate}
\end{proposition}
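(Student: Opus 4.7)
The plan is to begin by writing down a uniform presentation of the $p^2$-dimensional quotient $H_{\alpha,\beta} := H/\mathfrak{m}_{\alpha,\beta}H$. By Proposition \ref{xxpro8.1}(4) and the PBW basis, $H$ is free of rank $p^2$ over $Z$, so $\dim_{\Bbbk} H_{\alpha,\beta} = p^2$; the algebra is generated by the images of $X_1$ and $X_2$ subject to $X_1^p = \alpha$, $X_2^p - d_0^{p-1}X_2 = \beta$, and $[X_2,X_1] = d_0 X_1 + d(\alpha)$, via the decomposition $\delta(X_1) = d_0 X_1 + d(X_1^p)$. Since $\mathrm{PIdeg}(H) = p$, the Azumaya condition at $\mathfrak{m}_{\alpha,\beta}$ is equivalent to $H_{\alpha,\beta} \cong M_p(\Bbbk)$, so all four assertions reduce to understanding $H_{\alpha,\beta}$ as a function of $(\alpha,\beta)$.

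When $d_0 \neq 0$, I would normalize by setting $u := X_1 + d(\alpha)/d_0$ and $w := X_2/d_0$; using \eqref{E8.1.1} and \eqref{E8.1.2}, direct computation yields
\[
u^p = (d_0^p\alpha + d(\alpha)^p)/d_0^p, \qquad wu - uw = u, \qquad w^p - w = \beta/d_0^p.
\]
This immediately gives (3): when $d_0^p\alpha + d(\alpha)^p = 0$, one has $u^p = 0$, and using $d(\alpha)^p = -d_0^p\alpha$ the last relation rewrites as $w^p - w + \alpha\beta d(\alpha)^{-p} = 0$. To identify the simple modules, observe that $uw = (w-1)u$ makes $u$ normal, so the two-sided ideal $uH_{\alpha,\beta}$ is nilpotent and contained in the Jacobson radical; the quotient $\Bbbk[w]/(w^p - w + c)$ is separable of dimension $p$, producing exactly $p$ one-dimensional simples $S_{r_i}$ indexed by the Artin--Schreier roots $r_i = r_0 + i$. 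To show they form a single block, I would construct explicit non-split two-dimensional extensions $0 \to S_r \to M \to S_{r-1} \to 0$ on a basis $v_0, v_1$ with $uv_1 = v_0$ and $wv_1 = (r-1)v_1$, using the identity $(r-1)^p - (r-1) = r^p - r$ in characteristic $p$ to verify consistency with $w^p - w + c = 0$. For (2), when $d_0 = 0$ and $d(\alpha) = 0$, the algebra is commutative with $X_1^p = \alpha$, $X_2^p = \beta$, and shifting $X_1$ and $X_2$ by $p$-th roots in $\Bbbk$ produces the stated isomorphism with $\Bbbk[X,Y]/\langle X^p, Y^p\rangle$.

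For the Azumaya direction of (1), I would exhibit a $p$-dimensional simple $H_{\alpha,\beta}$-module, forcing $H_{\alpha,\beta} \cong M_p(\Bbbk)$ by dimension. In the Weyl case $d_0 = 0$ with $d(\alpha) \neq 0$, the commutator $[X_2, X_1] = d(\alpha)$ is a nonzero scalar (forcing $\alpha \neq 0$ since $d(0) = 0$), so after rescaling $X_2$ the algebra becomes a central quotient of the first Weyl algebra at a nonzero value of $X_1^p$, known classically to be $M_p(\Bbbk)$. In the case $d_0 \neq 0$ with $d_0^p\alpha + d(\alpha)^p \neq 0$, the element $u$ is a unit satisfying $uwu^{-1} = w + 1$, and on the $p$-dimensional span of $1, u, \ldots, u^{p-1}$ the element $w$ acts with $p$ distinct Artin--Schreier eigenvalues, giving irreducibility.

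Finally, part (4) is geometric: the non-Azumaya locus is cut out in $\mathrm{maxspec}(Z) \cong \mathbb{A}^2$ (with coordinates $y := X_1^p$ and $z := X_2^p - d_0^{p-1}X_2$) by $f(y) := d_0^p y + d(y)^p$, which a direct expansion identifies with $\sum_{s \geq 0} d_s^p y^{p^s}$. Since $f$ depends only on $y$, $\mathcal{NA}(H) = \bigsqcup_i \{\alpha_i\} \times \mathbb{A}^1$ as $\alpha_i$ ranges over the distinct roots of this polynomial, giving the claimed count. I expect the main obstacle to be the block-structure argument in part (3): verifying that the $p$ evidently distinct one-dimensional simples all lie in a single block requires controlling how $u$-conjugation on $\mathrm{maxspec}(\Bbbk[w]/(w^p - w + c))$ implements the Artin--Schreier shift $r \mapsto r - 1$ compatibly with the algebra structure.
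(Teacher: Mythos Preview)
Your proof is correct and follows essentially the same route as the paper: write down the presentation of $H_{\alpha,\beta}$, normalize via $u = d_0 X_1 + d(\alpha)$ (your $u$ differs only by the harmless scalar $d_0$) and $w = d_0^{-1}X_2$, and split into cases according to whether $d_0^p\alpha + d(\alpha)^p$ vanishes. The one minor tactical difference is in the Azumaya case with $d_0 \neq 0$: the paper observes that since $u$ is a unit the substitution $x := wu^{-1}$ gives $[x,u] = 1$ and hence reduces to the Weyl-algebra Azumaya fact, whereas you construct a $p$-dimensional simple module directly using the Artin--Schreier spread of eigenvalues of $w$ under conjugation by $u$; both arguments are short and valid, and your block-structure argument in part (3) is a more explicit version of the paper's one-line appeal to the relation $wu - uw = u$.
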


\noindent Note that in the setting of part (3) the 
third equation in \eqref{E8.2.1} is equivalent to
$$w^p-w=\beta d_0^{-p}.$$

\begin{proof} 
(1,2,3) 
Let $(\alpha,\beta) \in \mathbb{A}^2(\Bbbk)$. 
The defining relation of 
$H := H({\bf d_s, b_s, c_{s,t}})$, namely 
$[X_2,X_1] = \delta (X_1)$, yields in 
$H_{\alpha,\beta}$ the relation 
\begin{equation}
\label{E8.2.2}\tag{E8.2.2}
X_2 X_1-X_1X_2=\delta(X_1)=d_0 X_1 
+\sum_{s\geq 1} d_s \alpha^{p^{s-1}}
=d_0 X_1+d(\alpha).
\end{equation}
Suppose first that 
$d_0^p \alpha + d(\alpha)^p \neq 0$. If 
$d_0 = 0$, then \eqref{E8.2.2} yields the 
defining relation for the Weyl algebra. 
Thus $H_{\alpha,\beta}$ is a factor of 
the first Weyl algebra over $\Bbbk$, of 
dimension $p^2$. But the Weyl $\Bbbk$-algebra 
is Azumaya of PI-degree $p$ by 
\cite[Theorem 2]{Re}, so each of its 
$p^2$-dimensional factors, in particular 
$H_{\alpha,\beta}$, is isomorphic to $M_p(\Bbbk)$.

Next consider the case where 
$d_0^p \alpha + d(\alpha)^p \neq 0$ and 
$d_0 \neq 0$. Let $u$ and $w$ denote the 
images in $H_{\alpha,\beta}$ of 
$d_0 X_1+d(\alpha)$ and
$d_0^{-1}X_2$ respectively. These satisfy 
the following relations in $H_{\alpha,\beta}$:
$$\begin{aligned}
w u - u w&= u,\\
u^p&=d_0^p \alpha+d(\alpha)^p,\\
w^p- w&= d_0^{-p} \beta.
\end{aligned}
$$
The second of these relations shows that 
$u$ is a unit, so post-multiplying the 
first relation by $u^{-1}$ again yields 
the defining relation of the first Weyl 
algebra. As before we find that 
$H_{\alpha,\beta} \cong M_p(\Bbbk)$. We 
have therefore shown that $\mathcal{NA}(H)$ 
is contained in the subvariety of 
$\mathrm{maxspec}(Z)$ defined by 
$d_0^pX_1^p + d(X_1^p)^p$. 

Now assume that $d_0^p \alpha + d(\alpha)^p = 0$. 
Suppose first that $d_0 = 0$. Then 
$Z = \Bbbk [X_1^p,X_2^p]$, and so the images in 
$H_{\alpha,\beta}$ of $X_1 - \alpha^{\frac{1}{p}}$ 
and $X_2 - \beta^{\frac{1}{p}}$ are mutually 
commuting nilpotent elements , which together 
generate $H_{\alpha,\beta}$ and have index of 
nilpotency $p$. Therefore $H_{\alpha,\beta}$ 
is the local algebra 
$\Bbbk [X,Y]/\langle X^p, Y^p \rangle$ in this case. 

Finally, suppose that $d_0^p \alpha + d(\alpha)^p = 0$, 
with $d_0 \neq 0$. Let $u$ and $w$ denote the 
images in $H_{\alpha,\beta}$ of 
$d_0 X_1+d(\alpha)$ and
$d_0^{-1}X_2$ respectively.
Then we obtain the three relations in \eqref{E8.2.1}.
The first two relations of \eqref{E8.2.1} now show that 
$u$ is a nilpotent normal element, with index of 
nilpotency at most $p$. Now $H_{\alpha,\beta}/uH_{\alpha,\beta} 
= \Bbbk \langle w \rangle$, and the third relation 
of \eqref{E8.2.1} has the form $f(w) = 0$ where $f(x)$ is a 
polynomial with $f'(x) = -1$. Hence $f(w) = 0$ 
has no repeated roots (or $f(w)=0$ has root $w, w+1,\cdots,w+p-1$
when one root $w$ is chosen), so that 
$$ H_{\alpha,\beta}/u H_{\alpha,\beta} \cong \Bbbk^{\oplus p}.$$
Since $\mathrm{dim}_{\Bbbk}H_{\alpha,\beta} = p^2$, 
this shows that $u^p = 0 \neq u^{p-1}$. Moreover 
the second relation of \eqref{E8.2.1} shows that the 
$p$ 1-dimensional simple $H_{\alpha,\beta}$-modules form 
a single block, the  $\mathrm{Ext}$-quiver being a circle. 
This completes the proof of parts (1), (2) and (3).

\noindent 
(4) From the above proof we see that a maximal 
ideal $\mathfrak{m}_{\alpha,\beta}$ of $Z$ is 
in $\mathcal{NA}(H)$ if and only if $\alpha$ is 
a root of the equation $d_0^p x + d(x)^p = 0$, 
that is of the equation 
$d_0^p x + \sum_{s\geq 1}d_s^p x^{p^s} = 0$. 
This proves (4).
\end{proof}

\subsection{The Hopf center and restricted Hopf algebras}
\label{xxsec8.3} 
Recall the following definition, due to Andruskiewitsch, 
\cite[Definition 2.2.3]{An}.

\begin{definition}
\label{xxdef8.3} 
Let $H$ be a Hopf algebra. The \emph{Hopf center} of $H$, 
denoted $C(H)$, is the unique largest central Hopf 
subalgebra of $H$.
\end{definition}

\noindent It is easy to see that $C(H)$ exists for any Hopf algebra 
$H$. As already discussed in Remark \ref{xxrem2.5}(1), 
when $\Bbbk$ has positive characteristic $p$ and 
$\mathfrak{g}$ is an $n$-dimensional Lie algebra over 
$\Bbbk$, $C(U(\mathfrak{g}))$ exists and is a polynomial 
algebra in $n$ variables, with $U(\mathfrak{g})$ a free 
$C(U(\mathfrak{g}))$-module of rank a power of $p$. In 
general, even when $H$ is a finite module over its center, 
$C(H)$ can be very small - consider, for instance, the 
group algebra $H = FG$ over any field $F$ of the dihedral 
group 
$$G = \langle a,b : b^2 = 1, bab = a^{-1} \rangle,$$ 
where $C(H) = F$. Nevertheless, current evidence 
suggests that when $H$ is a connected Hopf $\Bbbk$-algebra 
$C(H)$ may always be large, and Question \ref{xxque2.7} 
proposes that this may be the case for IHOEs over 
$\Bbbk$. We shall show in this subsection that this is 
indeed the case for all 2-step $\Bbbk$-IHOEs. First, 
however, we show that whenever a $\Bbbk$-IHOE \emph{is} 
a finite module over its Hopf center, some of the 
desirable features of the Lie algebra case immediately 
follow. In this subsection we denote the augmentation 
ideal of a Hopf algebra $T$ by $T_{+}$. 

\begin{proposition}
\label{xxpro8.4} 
Let $\Bbbk$ be algebraically closed of characteristic 
$p > 0$, let $n$ be a positive integer, and let $H$ 
be a noncommutative $n$-step IHOE over $\Bbbk$. 
Suppose that $H$ is a finite $C(H)$-module. Then 
$C(H)$ is a polynomial algebra in $n$ variables, 
and $H$ is a free $C(H)$-module of rank $p^{\ell}$ 
for some $\ell \geq 2$.
\end{proposition}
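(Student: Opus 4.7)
The plan is, first, to identify $C(H)$ as $\Bbbk[Y_1,\dots,Y_n]$ via Theorem \ref{xxthm6.1}. As a commutative Hopf subalgebra of the connected Hopf algebra $H$ (Proposition \ref{xxpro1.2}(3)), $C(H)$ is itself connected (since $C(H)_0 \subseteq H_0 = \Bbbk$) and is a domain as a subalgebra of $H$. The hypothesis that $H$ is finite over $C(H)$, combined with a noncommutative Artin--Tate argument, forces $C(H)$ to be $\Bbbk$-affine. Thus $C(H)$ is the coordinate ring of a connected unipotent algebraic $\Bbbk$-group of dimension $m := \GKdim C(H)$, and Theorem \ref{xxthm6.1} yields $C(H) \cong \Bbbk[Y_1,\dots,Y_m]$ with $m = \GKdim H = n$ by Corollary \ref{xxcor2.4}.

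Next I would promote ``finite'' to ``free''. By Corollary \ref{xxcor2.4}, $H$ is homologically homogeneous and GK-Cohen--Macaulay, and since $C(H) \subseteq Z(H)$ have the same Krull dimension $n$, a short depth-transfer argument shows $H$ is Cohen--Macaulay over $C(H)$, of full depth $n$ at every maximal ideal. The Auslander--Buchsbaum formula over the regular ring $C(H) = \Bbbk[Y_1,\dots,Y_n]$ then forces $\projdim_{C(H)_{\mathfrak m}} H_{\mathfrak m} = 0$ for every maximal $\mathfrak m$, and Quillen--Suslin promotes projectivity to freeness of some rank $r \geq 1$.

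Finally, to pin down $r$: being central, $C(H)$ is automatically a normal Hopf subalgebra, so $\overline{H} := H/C(H)_+H$ is a Hopf algebra quotient of dimension $r$ (by freeness), and is connected as a quotient of the connected $H$. By \cite[Proposition 1.1(1)]{Ma}, $r = p^{\ell}$ for some $\ell \geq 0$. Passage to total quotient rings gives
\[
r \;=\; \dim_{Q(C(H))} Q(H) \;\geq\; \dim_{Q(Z(H))} Q(H) \;=\; (\PIdeg H)^2,
\]
and since $\PIdeg H \geq 2$ by noncommutativity of $H$ (hence $\geq p$ since it is a power of $p$ by Theorem \ref{xxthm3.3}), we obtain $r \geq p^2$, i.e.\ $\ell \geq 2$.

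The main obstacle is the freeness step, specifically transferring the Cohen--Macaulay property from $Z(H)$ down to the smaller central subring $C(H)$: this is routine in the commutative setting (depth is preserved under finite extensions of Noetherian rings of the same dimension), but demands care here. If that argument proves awkward, one can bypass the depth computation entirely by citing a Hopf-theoretic freeness theorem of Schneider or Skryabin applicable to normal commutative Hopf subalgebras.
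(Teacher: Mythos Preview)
Your outline matches the paper's proof almost step for step: connectedness of $C(H)$, Artin--Tate to get affineness, Theorem \ref{xxthm6.1} to identify $C(H)$ with a polynomial ring in $n$ variables, freeness over $C(H)$, and then the dimension of the connected quotient $\overline H$ being a power of $p$.

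The only substantive difference is the freeness step. The paper bypasses your depth--transfer and Auslander--Buchsbaum argument entirely: it simply cites \cite[Theorem 0.3]{WZ} to conclude that $H$ is finitely generated projective over the Hopf subalgebra $C(H)$, and then invokes (implicitly) Quillen--Suslin. This is exactly the ``Hopf-theoretic freeness theorem'' you flag as a fallback, so your instinct there is right; the Cohen--Macaulay route you sketch first is workable but, as you already suspect, needlessly delicate compared with the one-line citation.

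For $\ell\geq 2$ the paper argues slightly differently: since $H$ is prime, $\dim_{Q(Z(H))}Q(H)$ is a perfect square $>1$, and this square divides $r=\dim_{Q(C(H))}Q(H)$; hence $r$ is not prime, so $r=p^\ell$ forces $\ell\geq 2$. Your version, which inserts Theorem \ref{xxthm3.3} to get $\PIdeg H\geq p$ and hence $r\geq p^2$, is equally valid and arguably more transparent.
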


\begin{proof} By \cite[Proposition 2.5]{BOZZ}, $H$ is 
connected as a Hopf algebra, so $C(H)$ is also connected. 
Moreover, $C(H)$ is affine of Gel'fand-Kirillov dimension 
$n$, by the Artin-Tate lemma and Corollary \ref{xxcor2.4}. 
By Theorem \ref{xxthm6.1} ((3) $\Rightarrow$ (2)),
$C(H)$ is a commutative polynomial ring $\Bbbk[X_1,\cdots,X_n]$. 
By \cite[Theorem 0.3]{WZ}, $H$ is a finitely generated 
projective module over $C(H)$. Hence
$H$ is a free module over $C(H)$ of finite rank.
Let $r$ denote the rank of $H$ over $C(H)$. 
By definition, $C(H)$ is central in $H$, then
$\overline{H}: = H/(C(H))_{+} H$ is a Hopf
algebra of dimension $r$. By \cite[Corollary 5.3.5]{Mo1},
$\overline{H}$ is connected. Since $\overline{H}$ 
is finite dimensional and connected, its dimension
is of the form $p^{\ell}$ for some 
$\ell>0$ \cite[Proposition 2.2(7)]{Wa1}. 

Since $H$ is prime, the rank $H$ as a $Z(H)$-module 
is a square of some integer. This implies that $r$
is not a prime number. Therefore $\ell\geq 2$.
\end{proof}

Observe that, in the setting of the proposition, the 
factor algebra 
\begin{equation}\notag
\overline{H} := H/C(H)_{+} H
\end{equation} 
is a connected Hopf algebra of dimension $p^{\ell}$. 
It seems reasonable to call $\overline{H}$ the 
\emph{restricted Hopf algebra of} $H$. 

In the rest of this subsection we confirm that the 
hypotheses of the proposition are satisfied by all 
noncommutative 2-step IHOEs, with $\ell$ equal to 2 
or 3 in all cases, and we determine their restricted 
Hopf algebras. The following notation will remain in 
force throughout the rest of $\S$\ref{xxsec8.3}.

\begin{notation}
\label{xxnot8.5} 
The field $\Bbbk$ is algebraically closed of 
characteristic $p > 0$, and $H$ denotes a 2-step 
IHOE $H({\bf d_s,b_s,c_{s,t}}) = 
\Bbbk \langle X_1, X_2 \rangle$ as defined in 
Proposition \ref{xxpro5.6}, see also 
$\S$\ref{xxsec0.3}. The element 
$X_2^p -d_0^{p-1} X_2$ of $H$, which is central 
by Proposition \ref{xxpro8.1}(4), will be denoted by $z$.  

\noindent 
We will always denote the element of 
$H \otimes H$ defined in \eqref{E0.2.2} by $w$, 
so that, writing $\lambda_i$ for 
$\frac{(p-1)!}{i!(p-i)!}$, $(1 \leq i \leq p-1)$,
$$ w = \sum_{s \geq 0} b_s \left(\sum_{i=1}^{p-1} 
\lambda_i (X_1^{p^s})^i \otimes (X_1^{p^s})^{p-i}\right) 
+ \sum_{0 \leq s < t}c_{s,t}\left(X_1^{p^s} 
\otimes X_1^{p^t} - X_1^{p^t} \otimes X_1^{p^s}\right).$$
We denote the element  $X_2\otimes 1+1\otimes X_2$ 
of $H \otimes H$ by $b$.

\noindent 
Let $s$ be a positive integer divisible by $p$, 
and let $\equiv_s$ denote equality of elements of 
$H \otimes H$ modulo the subspace 
$ X_1^{s} H\otimes H+H\otimes X_1^{p} H$. Since this 
subspace is preserved by $ad_b$ it follows that, for 
elements $f$ and $g$ of $H \otimes H$,
\begin{equation}
\label{E8.5.1}\tag{E8.5.1} 
f\equiv_s g \quad \Longrightarrow  
\quad ad_b(f)\equiv_s ad_b(g).
\end{equation}
\end{notation}

Using the fact
$$ad_{X_2}( X_1^i)=
i X_1^{i-1} \delta(X_1)$$
for all $i\geq 1$, one can check that
\begin{align}
ad_b(w)&\notag\\
\notag
&=[\delta(X_1)\otimes 1]
\left(b_0 \{(X_1\otimes 1+1\otimes X_1)^{p-1}-X_1^{p-1}\otimes 1\}
+\sum_{t>0} c_{0,t} 1\otimes X_1^{p^t}\right)\\
\label{E8.5.2}\tag{E8.5.2}&
+[1\otimes \delta(X_1)]
\left(b_0\{(X_1\otimes 1+1\otimes X_1)^{p-1}-1\otimes X_1^{p-1}\}
-\sum_{t>0} c_{0,t} X_1^{p^t}\otimes 1\right).
\end{align}

\begin{lemma}
\label{xxlem8.6} 
Retain Notation \ref{xxnot8.5}.
\begin{enumerate}
\item[(1)] 
$\Delta(z)= 
z\otimes 1+1\otimes z+w^p- d_0^{p-1} w+ad_b^{p-1}(w).$ 
Moreover, the element $w^p- d_0^{p-1} w+ad_b^{p-1}(w)$ 
of $H \otimes H$ is in 
$\Bbbk[X_1]\otimes \Bbbk[X_1]$.
\item[(2)]
The subalgebra $\Bbbk[X_1^p, z]$ of $H$ is a Hopf 
subalgebra if and only if
$$- d_0^{p-1} w+ad_b^{p-1}(w)\in 
\Bbbk[X_1^p]\otimes \Bbbk[X_1^p].$$
\item[(3)]
If $b_0=0$, then $\Bbbk[X_1^p, z]$ is a Hopf 
subalgebra of $H$.
\end{enumerate}
\end{lemma}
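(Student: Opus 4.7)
My plan is to establish the three parts in order, reducing (3) to (2) via a direct computation, and deriving (2) from the explicit formula in (1).

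For part (1), the strategy is to apply the noncommutative binomial theorem, Corollary \ref{xxcor7.2}, to the decomposition $\Delta(X_2)=b+w$. The hypothesis of that corollary is satisfied because $\ad_b$ preserves $\Bbbk[X_1]\otimes\Bbbk[X_1]$: it acts on simple tensors by
$$\ad_b(f\otimes g)=\delta(f)\otimes g+f\otimes \delta(g),$$
and $\delta(\Bbbk[X_1])\subseteq \Bbbk[X_1]$ since $\delta(X_1)\in\Bbbk[X_1]$. Hence every $\ad_b^i(w)$ lies in the commutative subring $\Bbbk[X_1]\otimes\Bbbk[X_1]$ and so commutes with $w$. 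Corollary \ref{xxcor7.2} then gives $(b+w)^p=b^p+w^p+\ad_b^{p-1}(w)$, and since $X_2\otimes 1$ commutes with $1\otimes X_2$, one has $b^p=X_2^p\otimes 1+1\otimes X_2^p$. Subtracting $d_0^{p-1}\Delta(X_2)=d_0^{p-1}b+d_0^{p-1}w$ yields the claimed formula, and the three summands beyond $z\otimes 1+1\otimes z$ all lie in $\Bbbk[X_1]\otimes\Bbbk[X_1]$.

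For part (2), the key auxiliary observation is that a sub-bialgebra of the connected Hopf algebra $H$ is automatically a Hopf subalgebra: its coradical is contained in $H_0=\Bbbk$, so it is itself a connected bialgebra, hence admits a unique antipode, which by convolution uniqueness must coincide with the restriction of $S$. Thus it suffices to check closure under $\Delta$ and $\epsilon$; closure under $\epsilon$ is immediate, and since $X_1^p$ is primitive, closure under $\Delta$ reduces to $\Delta(z)\in\Bbbk[X_1^p,z]\otimes\Bbbk[X_1^p,z]$, which by part (1) is equivalent to $w^p-d_0^{p-1}w+\ad_b^{p-1}(w)\in \Bbbk[X_1^p]\otimes\Bbbk[X_1^p]$. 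Frobenius on the commutative characteristic-$p$ algebra $\Bbbk[X_1]\otimes\Bbbk[X_1]$ is additive and sends $X_1\otimes 1,\ 1\otimes X_1$ into $\Bbbk[X_1^p]\otimes\Bbbk[X_1^p]$, so $w^p$ is already there, and the condition collapses to the stated one.

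For part (3), the goal is to verify the condition of (2) when $b_0=0$. Formula \eqref{E8.5.2} then simplifies to
$$\ad_b(w)=\sum_{t>0}c_{0,t}\bigl[\delta(X_1)\otimes X_1^{p^t}-X_1^{p^t}\otimes \delta(X_1)\bigr].$$
Using $\delta(X_1^{p^r})=p^rX_1^{p^r-1}\delta(X_1)=0$ for $r\geq 1$ together with the identity $[X_2,\delta(X_1)]=\delta'(X_1)\delta(X_1)=d_0\delta(X_1)$, a one-step induction gives $\ad_b^n(w)=d_0^{n-1}\ad_b(w)$ for all $n\geq 1$. Writing $\delta(X_1)=d_0X_1+\tilde\delta$ with $\tilde\delta:=\sum_{r\geq 1}d_rX_1^{p^r}\in\Bbbk[X_1^p]$, the $\tilde\delta$-contributions to $\ad_b^{p-1}(w)=d_0^{p-2}\ad_b(w)$ lie in $\Bbbk[X_1^p]\otimes\Bbbk[X_1^p]$, while the remaining contribution is exactly $d_0^{p-1}\sum_{t>0}c_{0,t}Y_{0,t}$. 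On the other hand, when $b_0=0$ the only summands of $w$ lying outside $\Bbbk[X_1^p]\otimes\Bbbk[X_1^p]$ are the $c_{0,t}Y_{0,t}$ with $t\geq 1$, since every $Z_s$ $(s\geq 1)$ and $Y_{s,t}$ $(s\geq 1)$ already involves only $p$-th powers of $X_1$. The two contributions cancel in $-d_0^{p-1}w+\ad_b^{p-1}(w)$, and the claim follows from part (2). The main obstacle is precisely this bookkeeping: identifying the single offending family of monomials on both sides and matching them with the correct scalar, which is what makes the cancellation succeed.
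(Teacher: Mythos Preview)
Your proof is correct and follows essentially the same approach as the paper's. In each part you use the same ingredients: for (1) the Jacobson binomial formula of Corollary~\ref{xxcor7.2} applied to $\Delta(X_2)=b+w$ with the observation that all $\ad_b^i(w)$ lie in the commutative subalgebra $\Bbbk[X_1]\otimes\Bbbk[X_1]$; for (2) the reduction to a bialgebra check via connectedness, together with $w^p\in\Bbbk[X_1^p]\otimes\Bbbk[X_1^p]$; and for (3) the inductive identity $\ad_b^n(w)=d_0^{\,n-1}\ad_b(w)$ (when $b_0=0$) followed by the cancellation of the $c_{0,t}Y_{0,t}$ terms between $-d_0^{p-1}w$ and $\ad_b^{p-1}(w)$, which is exactly the computation the paper records as $\mathcal{T}_1,\mathcal{T}_2\in\Bbbk[X_1^p]\otimes\Bbbk[X_1^p]$.
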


\begin{proof} (1) First of all, for every $i\geq 0$,
$ad^i_b(w)\in \Bbbk[X_1]\otimes \Bbbk[X_1]$. Hence
$\{ad^i_b(w)\}_{i\geq 0}$ are in a commutative subalgebra
of $H\otimes H$. Thus all the hypotheses of 
Corollary  \ref{xxcor7.2} are satisfied. Therefore, by that corollary 
and the definitions of $H$ and $z$,  
$$\begin{aligned}
\Delta(z)&=\Delta(X_2^p)-d_0^{p-1} \Delta(X_2)
=(\Delta(X_2))^p-d_0^{p-1} \Delta(X_2)\\
&=(w+b)^p-d_0^{p-1} (w+b)\\
&=w^p+ b^p+ad_b^{p-1}(w) -d_0^{p-1}(w+b)\\
&=w^p + (X_2\otimes 1+1\otimes X_2)^p 
+ad_b^{p-1}(w)-d_0^{p-1}(X_2\otimes 1+1\otimes X_2+w)\\
&=w^p + (X_2\otimes 1)^p+(1\otimes X_2)^p 
+ad_b^{p-1}(w)-d_0^{p-1}(X_2\otimes 1+1\otimes X_2+w)\\
&=z\otimes 1+ 1\otimes z+ w^p -d_0^{p-1} w+ad_b^{p-1}(w).
\end{aligned}
$$
That $w^p- d_0^{p-1} w+ad_b^{p-1}(w)$ is in 
$\Bbbk[X_1]\otimes \Bbbk[X_1]$ is clear from the definition of $w$.

(2) Denote $\Bbbk[X_1^p,z]$ by $Z$.
Since a connected bialgebra is a Hopf algebra by 
\cite[Corollary 3.5.4(a)]{HS}, it is enough to 
check when $\Delta (Z) \subseteq Z \otimes Z$. 
As $X_1^p$ is primitive, this amounts to checking 
whether $\Delta(z) \in Z \otimes Z$. The result now follows 
from part (1) and the fact that $w^p\in 
\Bbbk[X_1^p]\otimes \Bbbk[X_1^p]$.

(3) If $b_0=0$, then, by 
\eqref{E8.5.2}, 
$$ad_b(w)=[\delta(X_1)\otimes 1]
\left(\sum_{t>0} c_{0,t} 1\otimes X_1^{p^t}\right)
+[1\otimes \delta(X_1)]
\left(-\sum_{t>0} c_{0,t} X_1^{p^t}\otimes 1\right).$$
By \eqref{E8.1.1} and the fact that $ad_{X_2}(X_1^{pi})=0$ 
for all $i \geq 0$, we have that
$$ad_b^n(w)=[d_0^{n-1}\delta(X_1)\otimes 1]
\left(\sum_{t>0} c_{0,t} 1\otimes X_1^{p^t}\right)
+[1\otimes d_0^{n-1}\delta(X_1)]
\left(-\sum_{t>0} c_{0,t} X_1^{p^t}\otimes 1\right)$$
for all $n\geq 1$. Then
$$\begin{aligned}
ad^{p-1}_b(w)
&=d_0^{p-2} \left\{(\delta(X_1)\otimes 1)
\left(\sum_{t>0} c_{0,t} 1\otimes X_1^{p^t}\right)
-(1\otimes \delta(X_1)) \left(\sum_{t>0} c_{0,t} 
X_1^{p^t}\otimes 1\right)\right\}
\\
&= d_0^{p-1}\left\{\sum_{t>0} c_{0,t} (X_1\otimes X_1^{p^t}
-X_1^{p^t}\otimes X_1)\right\} +{\mathcal T}_1
\end{aligned}
$$
where ${\mathcal T}_1\in \Bbbk[X_1^p]\otimes \Bbbk[X_1^p]$.
On the other hand, by definition, 
$$\begin{aligned}
d_0^{p-1} w &= 
d_0^{p-1} \left\{\sum_{t>0} c_{0,t} (X_1\otimes X_1^{p^t}
-X_1^{p^t}\otimes X_1)\right\} +{\mathcal T}_2
\end{aligned}
$$
where ${\mathcal T}_2\in \Bbbk[X_1^p]\otimes \Bbbk[X_1^p]$.
Hence
$$-d_0^{p-1} w+ad_{b}^{p-1}(w) 
\in \Bbbk[X_1^p]\otimes \Bbbk[X_1^p].$$
The assertion follows from part (2).
\end{proof}

\begin{lemma}
\label{xxlem8.7} 
Retain Notation \ref{xxnot8.5}.
\begin{enumerate}
\item[(1)] 
For all $i > 0$,
\begin{equation}
\label{E8.7.1}\tag{E8.7.1} 
ad_b^i(w)\equiv_{p} 0 .
\end{equation}
\item[(2)]
Suppose $b_0\neq 0$ and $d_0 \neq 0$. Then 
\begin{equation}
\notag
-d_0^{p-1} w+ad_b^{p-1}(w)\not\in 
\Bbbk[X_1^p]\otimes \Bbbk[X_1^p].
\end{equation}
Hence $\Bbbk[X_1^p, z]$ is not a Hopf subalgebra of $H$.
\item[(3)]
Suppose that $d_0 =0$, that $b_0\neq 0$ and 
that $\delta(X_1)\neq 0$. 
Then $\Bbbk[X_1^p, z]$ is not a Hopf subalgebra of $H$.
\end{enumerate}
\end{lemma}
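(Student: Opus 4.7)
The plan is to exploit the fact that $ad_b$ acts on $H\otimes H$ as the tensor derivation $\delta\otimes\id+\id\otimes\delta$, where $\delta=[X_2,-]$, combined with a systematic reduction modulo the subspace $J:=X_1^p H\otimes H+H\otimes X_1^p H$. For any $s\ge 1$ one has $\delta(X_1^{p^s})=p^s X_1^{p^s-1}\delta(X_1)=0$, so every summand of $w$ arising from $b_s$ with $s\ge 1$ or from $c_{s,t}$ with $s\ge 1$ is annihilated by $ad_b$ and in any case already lies in $J$; the only potentially nontrivial contributions to $ad_b(w)$ come from the $b_0$ and the $c_{0,t}$ summands. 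For part (1) I would apply the Leibniz rule directly or invoke \eqref{E8.5.2}, observing that modulo $J$ one has $\delta(X_1^i)\equiv id_0 X_1^i\pmod{X_1^p H}$ and that each $c_{0,t}$-piece of $ad_b(w)$ carries a factor $X_1^{p^t}$ in one tensor slot and so falls into $J$. The $b_0$ contribution reduces modulo $J$ to $b_0 d_0\sum_{i=1}^{p-1}\lambda_i\bigl(i+(p-i)\bigr)X_1^i\otimes X_1^{p-i}=b_0 d_0\, p\sum_{i=1}^{p-1}\lambda_i X_1^i\otimes X_1^{p-i}=0$ in characteristic $p$. Hence $ad_b(w)\equiv_p 0$, and \eqref{E8.5.1} iterates this to give $ad_b^i(w)\equiv_p 0$ for all $i>0$.

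For part (2) the same analysis shows $w\equiv b_0\sum_{i=1}^{p-1}\lambda_i X_1^i\otimes X_1^{p-i}\pmod J$ since every other summand of $w$ lies in $J$, while by (1) we have $ad_b^{p-1}(w)\equiv 0\pmod J$. Thus
\[
-d_0^{p-1}w+ad_b^{p-1}(w)\equiv -d_0^{p-1}b_0\sum_{i=1}^{p-1}\lambda_i X_1^i\otimes X_1^{p-i}\pmod J.
\]
When $b_0 d_0\ne 0$ the right side is a nonzero element of $H\otimes H/J$ supported on the classes of $X_1^i\otimes X_1^{p-i}$ for $1\le i\le p-1$ and having no $1\otimes 1$ component. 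On the other hand, $\Bbbk[X_1^p]\otimes \Bbbk[X_1^p]$ is spanned by tensors $X_1^{pa}\otimes X_1^{pb}$, and every such monomial with $a+b>0$ lies in $J$, so the image of $\Bbbk[X_1^p]\otimes \Bbbk[X_1^p]$ in $H\otimes H/J$ is exactly $\Bbbk\cdot(1\otimes 1)$. These two subspaces intersect trivially in the quotient, which forces $-d_0^{p-1}w+ad_b^{p-1}(w)\notin \Bbbk[X_1^p]\otimes \Bbbk[X_1^p]$, and Lemma~\ref{xxlem8.6}(2) delivers the Hopf-subalgebra conclusion.

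For part (3) the hypothesis $d_0=0$ turns the target into $ad_b^{p-1}(w)$ and forces $\delta^2(X_1)=0$; setting $D:=\delta(X_1)=\sum_{s\ge 1}d_s X_1^{p^s}\in \Bbbk[X_1^p]\setminus\{0\}$ one obtains $\delta^n(X_1^j)=\frac{j!}{(j-n)!}X_1^{j-n}D^n$. The Leibniz expansion $ad_b^{p-1}=\sum_k\binom{p-1}{k}\delta^k\otimes\delta^{p-1-k}$ applied to $\lambda_i X_1^i\otimes X_1^{p-i}$ survives only for $k\in\{i-1,i\}$; summing over $i$ and invoking Wilson's theorem $(p-1)!\equiv -1$ in $\Bbbk$ assembles the $b_0$ contribution into
\[
-b_0\sum_{i=1}^{p-1}\binom{p-1}{i}D^i\otimes X_1 D^{p-1-i}-b_0\sum_{j=0}^{p-2}\binom{p-1}{j}X_1 D^j\otimes D^{p-1-j},
\]
while the $c_{0,t}$-contributions lie entirely in $\Bbbk[X_1^p]\otimes \Bbbk[X_1^p]$ because $\delta(X_1^{p^t})=0$ for $t\ge 1$. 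The three groups of terms are segregated inside $H\otimes H$ by the residues modulo $p$ of the $X_1$-degrees in their two tensor slots: the first sum sits in $\Bbbk[X_1^p]\otimes X_1\Bbbk[X_1^p]$, the second in $X_1\Bbbk[X_1^p]\otimes \Bbbk[X_1^p]$, and the $c_{0,t}$-piece in $\Bbbk[X_1^p]\otimes \Bbbk[X_1^p]$. Since these three subspaces have pairwise trivial intersection and the $i=p-1$ term $-b_0 D^{p-1}\otimes X_1$ of the first sum is nonzero, we conclude $ad_b^{p-1}(w)\notin \Bbbk[X_1^p]\otimes \Bbbk[X_1^p]$, and Lemma~\ref{xxlem8.6}(2) again concludes. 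The main technical obstacle throughout is the combinatorial bookkeeping at two places: the clean cancellation $i\lambda_i+(p-i)\lambda_i=p\lambda_i=0$ in part (1), and the $X_1$-degree residue analysis in part (3) that isolates the surviving $-b_0 D^{p-1}\otimes X_1$ term from the $c_{0,t}$ contributions.
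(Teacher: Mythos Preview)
Your proof is correct and follows the same overall strategy as the paper: compute $-d_0^{p-1}w+ad_b^{p-1}(w)$ and invoke Lemma~\ref{xxlem8.6}(2). The computational details differ somewhat. In part~(1) the paper uses \eqref{E8.5.2} and the identity $(X_1\otimes 1+1\otimes X_1)^p=X_1^p\otimes 1+1\otimes X_1^p$ to see the vanishing, whereas you use the more direct observation $i\lambda_i+(p-i)\lambda_i=p\lambda_i=0$; both work. In part~(3) the paper computes $ad_b^{n}(w)$ inductively from \eqref{E8.5.2} and then asserts without further comment that the result is not in $\Bbbk[X_1^p]\otimes\Bbbk[X_1^p]$; your Leibniz expansion $ad_b^{p-1}=\sum_k\binom{p-1}{k}\delta^k\otimes\delta^{p-1-k}$ produces the same expression more directly, and your residue-mod-$p$ argument for the $X_1$-degrees in each tensor slot makes the final step explicit. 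One small point: to conclude that the first sum $-b_0\sum_{i=1}^{p-1}\binom{p-1}{i}D^i\otimes X_1 D^{p-1-i}$ is nonzero it is not quite enough to say that the $i=p-1$ term is nonzero; you should note that this term is the unique one whose second tensor factor has $X_1$-degree exactly~$1$ (the others have degree $\geq 1+p$), so no cancellation occurs.
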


\begin{proof} (1) Using \eqref{E8.5.2}, 
$$\begin{aligned} 
ad_b(w) 
&\equiv_p \; (\delta(X_1) \otimes 1)b_0 
\{(X_1 \otimes 1 + 1 \otimes X_1)^{p-1} - X_1^{p-1} \otimes 1 \}\\
& \quad \quad + (1 \otimes \delta(X_1))b_0 
\{(X_1 \otimes 1 + 1 \otimes X_1)^{p-1} - 1 \otimes X_1^{p-1} \}\\
&\equiv_p d_0 b_0(X_1 \otimes 1)\{(X_1 \otimes 1 
+ 1 \otimes X_1)^{p-1} - X_1^{p-1} \otimes 1 \}\\
& \quad \quad + d_0 b_0(1 \otimes X_1)\{(X_1 \otimes 1 
+ 1 \otimes X_1)^{p-1} - 1 \otimes X_1^{p-1} \}\\
&\equiv_p \; d_0 b_0 (X_1 \otimes 1 + 1 \otimes X_1)^p 
- d_0 b_0(X_1^p \otimes 1) - d_0 b_0 (1 \otimes X_1^p)\\
&\equiv_p \; 0.
\end{aligned}$$
This proves  \eqref{E8.7.1} for $i = 1$, and the general 
case follows from \eqref{E8.5.1}.

(2) Assume that $d_0$ and $b_0$ are non-zero. Then, using 
\eqref{E8.7.1},
$$\begin{aligned} 
-d_0^{p-1} w+ad_b^{p-1}(w)  &\equiv_p \; -d_0^{p-1}w\\
&\equiv_p \; -d_0^{p-1} b_0\left(\sum_{i=1}^{p-1}
\lambda_iX_1^i \otimes X_1^{p-i}\right).
\end{aligned}
$$
This implies that 
\begin{equation}
\notag
-d_0^{p-1} w+ad_b^{p-1}(w) \not\in \Bbbk [X_1^p] \otimes \Bbbk [X_1^p].
\end{equation}
The second claim now follows from Lemma \ref{xxlem8.6}(2).

(3) Using \eqref{E8.5.2} and the hypothesis $d_0=0$ we obtain
$$\begin{aligned} ad_b (w) 
&=[\delta(X_1)\otimes 1]
\left(b_0 \{(X_1\otimes 1+1\otimes X_1)^{p-1}-X_1^{p-1}\otimes 1\}
\right)\\
&\quad  
+[1\otimes \delta(X_1)]
\left(b_0\{(X_1\otimes 1+1\otimes X_1)^{p-1}-1\otimes X_1^{p-1}\}
\right) +{\mathcal T}
\end{aligned}
$$
where ${\mathcal T}\in \Bbbk [X_1^p] \otimes \Bbbk [X_1^p]$.
Since $d_0=0$, $\delta(X_1)\in \Bbbk [X_1^p]$ and hence is 
central. Therefore, by induction, we can show that
$$\begin{aligned}
ad^{p-1}_b(w)&= 
b_0 (p-1)! [\delta(X_1)\otimes 1]
\{(\delta(X_1)\otimes 1+1\otimes \delta(X_1))^{p-2}
(X_1\otimes 1+1\otimes X_1) \\
&\qquad - (\delta(X_1)\otimes 1)^{p-2} (X_1\otimes 1) 
\}\\
&\quad  
+b_0 (p-1)!  [1\otimes \delta(X_1)]
\{(\delta(X_1)\otimes 1+1\otimes \delta(X_1))^{p-2}
(X_1\otimes 1+1\otimes X_1) \\
&\qquad 
- (1\otimes \delta(X_1))^{p-2} (1\otimes X_1)\} 
+ \delta_{2, \mathrm{char}\Bbbk}\mathcal{T}\\
&= b_0 (p-1)! [(\delta(X_1)\otimes 1+1\otimes \delta(X_1))^{p-1}
(X_1\otimes 1+1\otimes X_1)\\
&\quad -(\delta(X_1)\otimes 1)^{p-1} (X_1\otimes 1)
-(1\otimes \delta(X_1))^{p-1} (1\otimes X_1)] 
+ \delta_{2, \mathrm{char}\Bbbk}\mathcal{T}.
\end{aligned}
$$
Observe that this element is not a member of 
$\Bbbk [X_1^p] \otimes \Bbbk [X_1^p]$. Since
$d_0=0$, $d_0^{p-1} w=0$. Therefore
$-d_0^{p-1} w+ad_b^{p-1}(w)\not\in \Bbbk [X_1^p]
\otimes \Bbbk [X_1^p]$. The assertion thus follows 
from Lemma \ref{xxlem8.6}(2).
\end{proof}

\noindent Lemmas \ref{xxlem8.6} and \ref{xxlem8.7} can 
now be applied to determine $C(H)$ for all 2-step IHOEs:

\begin{theorem}
\label{xxthm8.8} 
Retain Notation \ref{xxnot8.5}. Assume that $H$ is not
commutative.
\begin{enumerate}
\item[(1)] 
The subalgebra $\Bbbk [X_1^p, z^p]$ is a central Hopf 
subalgebra of $H$.
\item[(2)]
The Hopf center $C(H)$ of $H$ is either 
$\Bbbk [X_1^p, z^p]$ or $\Bbbk [X_1^p, z]$. 
\item[(3)] 
$C(H) = \Bbbk [X_1^p, z]$ if and only if $b_0= 0$.
\end{enumerate}
\end{theorem}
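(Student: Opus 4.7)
The plan is to deduce the three parts from Lemmas 8.6 and 8.7 together with Proposition 8.1(4), which identifies $Z(H)$ with $\Bbbk[X_1^p, z]$. For part (1), both $X_1^p$ and $z^p$ are central, so $\Bbbk[X_1^p, z^p]$ is central; it remains to show it is closed under $\Delta$. Since $X_1$ is primitive with $X_1 \otimes 1$ and $1 \otimes X_1$ commuting, $\Delta(X_1^p) = X_1^p \otimes 1 + 1 \otimes X_1^p$. For $z^p$ I invoke Lemma \ref{xxlem8.6}(1), which writes $\Delta(z) = z \otimes 1 + 1 \otimes z + e$ with $e := w^p - d_0^{p-1}w + \ad_b^{p-1}(w) \in \Bbbk[X_1] \otimes \Bbbk[X_1]$. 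The three summands pairwise commute in $H \otimes H$: $z \otimes 1$ and $1 \otimes z$ are central since $z$ is, and $e$ lies in the commutative ring $\Bbbk[X_1] \otimes \Bbbk[X_1]$. So the freshman's dream yields $\Delta(z^p) = z^p \otimes 1 + 1 \otimes z^p + e^p$, and $e^p \in \Bbbk[X_1^p] \otimes \Bbbk[X_1^p] \subseteq \Bbbk[X_1^p, z^p] \otimes \Bbbk[X_1^p, z^p]$.

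For parts (2) and (3), note that $C(H) \subseteq Z(H) = \Bbbk[X_1^p, z]$ and $C(H) \supseteq \Bbbk[X_1^p, z^p]$ by part (1). When $b_0 = 0$, Lemma \ref{xxlem8.6}(3) upgrades $\Bbbk[X_1^p, z]$ to a Hopf subalgebra, giving $C(H) = Z(H) = \Bbbk[X_1^p, z]$ immediately. When $b_0 \neq 0$, Lemmas \ref{xxlem8.7}(2) and \ref{xxlem8.7}(3) (using $\delta(X_1) \neq 0$ since $H$ is noncommutative) together with Lemma \ref{xxlem8.6}(2) show $e \notin \Bbbk[X_1^p] \otimes \Bbbk[X_1^p]$; I claim that this forces $C(H) = \Bbbk[X_1^p, z^p]$, which completes both (2) and the ``only if'' direction of (3).

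The core of that claim is a $z$-degree reduction. Take $f = \sum_{k=0}^{N} c_k(X_1^p) z^k \in C(H)$ with $c_N \neq 0$ and $N \geq 1$. Using the mutual commutativity, expand
\[
\Delta(z)^k = \sum_{c=0}^{k} \binom{k}{c} (z \otimes 1 + 1 \otimes z)^{k-c} e^{c},
\]
and view $\Delta(f)$ as a polynomial in the algebraically independent elements $z \otimes 1$ and $1 \otimes z$ with coefficients in $\Bbbk[X_1] \otimes \Bbbk[X_1]$. Exactly two multi-indices contribute to the coefficient of $(z \otimes 1)^{N-1} (1 \otimes z)^{0}$, namely $(k, c) = (N-1, 0)$ and $(N, 1)$, producing $\Delta(c_{N-1}) + N\,\Delta(c_N)\,e$. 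The containment $\Delta(f) \in C(H) \otimes C(H) \subseteq \Bbbk[X_1^p, z] \otimes \Bbbk[X_1^p, z]$ forces this coefficient into $\Bbbk[X_1^p] \otimes \Bbbk[X_1^p]$, hence $N\,\Delta(c_N)\,e \in \Bbbk[X_1^p] \otimes \Bbbk[X_1^p]$. Since $\Bbbk[X_1] \otimes \Bbbk[X_1]$ is free over $\Bbbk[X_1^p] \otimes \Bbbk[X_1^p]$ on the basis $\{X_1^i \otimes X_1^j : 0 \leq i, j < p\}$ and $\Delta(c_N) \neq 0$ in that domain, the assumption $e \notin \Bbbk[X_1^p] \otimes \Bbbk[X_1^p]$ forces the scalar $N$ to vanish in $\Bbbk$, so $p \mid N$. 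Then $c_N z^N = c_N(z^p)^{N/p} \in \Bbbk[X_1^p, z^p] \subseteq C(H)$, and induction on $N$ yields $f \in \Bbbk[X_1^p, z^p]$. The principal delicacy is identifying that only those two multi-indices contribute to the chosen coefficient, so that the scalar $N$ serves as the arithmetic lever enforcing the divisibility.
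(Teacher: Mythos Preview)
Your proof is correct. Part (1) is essentially identical to the paper's argument. For parts (2) and (3), however, you take a genuinely different route from the paper.

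The paper proves (2) by a rank argument: from (1) and Proposition \ref{xxpro8.1}(4) one has the chain $\Bbbk[X_1^p,z^p]\subseteq C(H)\subseteq \Bbbk[X_1^p,z]$, over whose endpoints $H$ is free of ranks $p^3$ and $p^2$ respectively. Proposition \ref{xxpro8.4} (which ultimately rests on Masuoka's result that a finite dimensional connected Hopf $\Bbbk$-algebra has dimension a power of $p$) forces the rank of $H$ over $C(H)$ to be a power of $p$ and $C(H)$ to be a polynomial ring, so $C(H)$ must coincide with one of the two endpoints. Part (3) then drops out of Lemmas \ref{xxlem8.6}(3) and \ref{xxlem8.7}(2,3), which decide whether the upper endpoint is a Hopf subalgebra. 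By contrast, you bypass Proposition \ref{xxpro8.4} entirely and prove (2) and (3) simultaneously by a direct $z$-degree reduction: when $b_0\neq 0$ you extract the coefficient of $(z\otimes 1)^{N-1}$ in $\Delta(f)$, and the condition $e\notin \Bbbk[X_1^p]\otimes\Bbbk[X_1^p]$ forces $p\mid N$. Your approach is more elementary and entirely self-contained within \S\ref{xxsec8}, while the paper's is shorter, more conceptual, and suggests how one might proceed for $n$-step IHOEs where an explicit coefficient computation would be unwieldy.
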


\begin{proof} 
(1) Since $H$ is connected, it is enough to show that 
$\Bbbk [X_1^p, z^p]$ is a bialgebra. First, $X_1^p$ 
is primitive. To see that $\Delta (z^p) \in 
\Bbbk [X_1^p, z^p] \otimes\Bbbk [X_1^p, z^p]$, note that 
$z$ is central and apply Lemma \ref{xxlem8.6}(1) to 
determine $\Delta(z)^p$.

\noindent 
(2) In view of part (1) and Proposition \ref{xxpro8.1}(4), 
$$\Bbbk [X_1^p, z^p] \subseteq C(H) \subseteq \Bbbk [X_1^p, z]. $$
Moreover, $H$ is a free module over each of these 
subalgebras, with the ranks over the outer two algebras 
being $p^3$ and $p^2$. But the rank of $H$ over $C(H)$ 
is also a power of $p$ by Proposition \ref{xxpro8.4}. 
Thus equality must hold at some point in the chain of inclusions.

\noindent (3) 
The assertion follows from Lemmas \ref{xxlem8.6}(3) and
\ref{xxlem8.7}(2,3).
\end{proof}

\noindent The following two propositions examine the 
restricted Hopf algebra of $H$ in each of the two 
cases distinguished by Theorem \ref{xxthm8.8}.

\begin{proposition}
\label{xxpro8.9} 
Retain Notation \ref{xxnot8.5}. Assume that $H$ is not
commutative. Suppose that $b_0=0$. In parts {\rm{(2,3)}}
let $\overline{H}$ denote the quotient Hopf algebra $H/Z_{+}H$.
\begin{enumerate}
\item[(1)] 
The center $Z:=\Bbbk [X_1^p, z]$ is a Hopf subalgebra of $H$. 
\item[(2)] 
Suppose $d_0 = 0$. Then $\overline{H}$ is
$$H_{0,0} \cong \Bbbk [x,y]/\langle x^p, y^p \rangle, $$
a commutative cocommutative Hopf algebra of 
dimension $p^2$, with $x$ and $y$ primitive.
\item[(3)] 
Suppose that $d_0 \neq 0$. Then $\overline{H}$ is
$$H_{0,0} \cong 
\Bbbk \langle x,y \, : \, [y,x] - x, 
\, x^p, \, y^p - y \rangle , $$
a cocommutative Hopf algebra of dimension $p^2$ 
with $x$ and $y$ primitive, the restricted 
enveloping algebra of the 2-dimensional 
non-abelian restricted Lie algebra.
\end{enumerate}
\end{proposition}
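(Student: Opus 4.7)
The plan is: part (1) is immediate from Theorem \ref{xxthm8.8}(3), which under the assumption $b_0 = 0$ identifies the Hopf center $C(H)$ with $\Bbbk[X_1^p, z]$; since $C(H)$ is by definition a Hopf subalgebra of $H$, and coincides with $Z(H)$ by Proposition \ref{xxpro8.1}(4), the claim follows at once.

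For parts (2) and (3), the strategy is to compute an explicit presentation of $\overline{H} := H/Z_+ H$ and then use a dimension count to upgrade a natural surjection onto $\overline{H}$ into an isomorphism. Write $x$ and $y$ for the images of $X_1$ and $X_2$. Since $Z_+ = \langle X_1^p,\, z\rangle$ with $z = X_2^p - d_0^{p-1} X_2$, the relations $x^p = 0$ and $y^p = d_0^{p-1} y$ hold in $\overline{H}$. Reducing the Ore relation $[X_2, X_1] = \delta(X_1) = \sum_{s \geq 0} d_s X_1^{p^s}$ modulo $x^p = 0$ yields $[y, x] = d_0\, x$. By Proposition \ref{xxpro8.1}(4), $H$ is free over $Z$ of rank $p^2$, so $\dim_\Bbbk \overline{H} = p^2$.

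The key step is verifying that both $x$ and $y$ are primitive in $\overline{H}$. Primitivity of $x$ is inherited from $X_1$. For $y$, we inspect the element $w \in \Bbbk[X_1]\otimes\Bbbk[X_1]$ in $\Delta(X_2) = X_2 \otimes 1 + 1 \otimes X_2 + w$, as given by \eqref{E0.2.2}. Because $b_0 = 0$, every summand of $w$ contains a factor $X_1^{p^r}$ with $r \geq 1$ in at least one tensor slot: the surviving $b_s Z_s$-terms have $s \geq 1$ with both slots carrying $(X_1^{p^s})^i$, and each $c_{s,t} Y_{s,t}$-term with $0 \leq s < t$ has $t \geq 1$, so at least one slot carries $X_1^{p^t}$. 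Since $X_1^{p^r} = (X_1^p)^{p^{r-1}} \in Z_+ H$ for $r \geq 1$, each such summand maps to zero in $\overline{H} \otimes \overline{H}$, whence $\Delta(y) = y \otimes 1 + 1 \otimes y$.

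Parts (2) and (3) then follow by matching presentations. In case (2), $d_0 = 0$ makes $\overline{H}$ commutative with primitive generators $x,y$ satisfying $x^p = y^p = 0$, so the natural algebra map $\Bbbk[x,y]/\langle x^p, y^p \rangle \to \overline{H}$ is surjective and is an isomorphism by dimension, carrying the primitive Hopf structure on the source to that on $\overline{H}$. In case (3), $d_0 \neq 0$ permits the rescaling $y' := d_0^{-1} y$, which is still primitive and now satisfies $[y', x] = x$, $(y')^p = y'$, and $x^p = 0$; these are exactly the defining relations of the restricted enveloping algebra of the $2$-dimensional non-abelian restricted Lie algebra $\Bbbk x \oplus \Bbbk y'$, which has dimension $p^2$, so once more the induced surjection is an isomorphism. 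The main obstacle is the primitivity check for $y$, but the hypothesis $b_0 = 0$ is precisely what forces every surviving summand of $w$ to contain a factor $X_1^{p^r}$ with $r \geq 1$ and so vanish in $\overline{H} \otimes \overline{H}$.
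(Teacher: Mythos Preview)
Your proof is correct and follows essentially the same approach as the paper's own proof. The only minor difference is that the paper invokes Proposition~\ref{xxpro8.2}(2),(3) to obtain the algebra presentations of $H_{0,0}$ in parts (2) and (3), whereas you derive these relations directly by reducing the Ore relation modulo $x^p = 0$; your primitivity argument for $y$ spells out exactly what the paper leaves implicit when it asserts ``$y$ is also primitive under the hypotheses'' and ``since $b_0 = 0$''.
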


\begin{proof} 
The center $Z$ is as stated, by Proposition 
\ref{xxpro8.1}(4). 

\noindent 
(1) This is Theorem \ref{xxthm8.8}(3).

\noindent 
(2) By Proposition \ref{xxpro8.2}(2),
$$H/Z_{+} H \; = \; H_{0,0} \; \cong \; 
\Bbbk [x,y]/\langle x^p, y^p \rangle,$$
where $x$ and $y$ are the images of $X_1$ and $X_2$ 
respectively. Since $X_1$ is primitive so is $x$, 
and $y$ is also primitive under the hypotheses. 

\noindent 
(3) Assume that $d_0 \neq 0$. Then Proposition 
\ref{xxpro8.2}(3) applies, yielding
$$
H/Z_{+} H \; = \; H_{0,0} \; \cong \; 
\Bbbk \langle x,y : x^p, \, y^p - y, \, [y,x] = x \rangle , 
$$
where $x$ and $y$ are respectively the images of $X_1$ 
and $d_0^{-1} X_2$. As in (2), $x$ and $y$ are both primitive 
since $b_0 = 0$.
\end{proof}

\begin{proposition}
\label{xxpro8.10} 
Retain Notation \ref{xxnot8.5}. Assume that $H$ is not
commutative. Suppose that $b_0\neq 0$. In parts {\rm{(2,3)}}
let $\overline{H}$ denote the quotient Hopf algebra $H/C_{+}H$.
\begin{enumerate}
\item[(1)] 
The center $\Bbbk [X_1^p, z]$ is not a Hopf subalgebra 
of $H$ and the unique largest central Hopf subalgebra 
of $H$ is $C := \Bbbk [X_1^p, z^p]$.
\item[(2)] 
Suppose that $d_0 = 0$. Then 
$$\overline{H}
\cong \Bbbk[x,y]/(x^p, y^{p^2})$$ with $x$ primitive and 
$$ \Delta (y) = y \otimes 1 + 1 \otimes y + 
\sum_{i=1}^{p-1}\lambda_i x^i \otimes x^{p-i}.$$
This is a commutative and cocommutative Hopf algebra
of dimension $p^3$.
\item[(3)] 
Suppose that $d_0 \neq 0$. Then 
the quotient Hopf algebra $\overline{H}$ is 
$\Bbbk \langle x,y,z \rangle$, with relations
$$
\begin{aligned}     
\; [y,x] &= x,\\
z&= y^p-y,\\
[y,z] &= [x,z] = 0,\\
x^p &= z^p = 0.
\end{aligned}
$$ 
This is noncommutative and cocommutative of 
dimension $p^3$, with $x$ primitive and 
$$ \Delta (y) = y \otimes 1 + 1 \otimes y + 
b_0 d_0^{-1}\sum_{i=1}^{p-1}\lambda_i x^i \otimes x^{p-i}$$
and
$$ \Delta (z) = z \otimes 1 + 1 \otimes z -b_0d_0^{-1} 
\sum_{i=1}^{p-1}\lambda_i x^i \otimes x^{p-i}$$
\end{enumerate}
\end{proposition}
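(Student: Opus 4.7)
The plan is to derive Part (1) directly from Theorem \ref{xxthm8.8} together with Lemmas \ref{xxlem8.6} and \ref{xxlem8.7}, and then to obtain Parts (2) and (3) by reducing the defining algebra relations and the coproducts of $X_2$ and $z$ modulo the Hopf ideal $C_{+}H$.

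\emph{Part (1).} Since $H$ is noncommutative one has $\delta(X_1)\neq 0$, so Lemma \ref{xxlem8.7}(2) (for $d_0\neq 0$) or Lemma \ref{xxlem8.7}(3) (for $d_0=0$) shows that $-d_0^{p-1}w+\mathrm{ad}_b^{p-1}(w)\notin \Bbbk[X_1^p]\otimes \Bbbk[X_1^p]$, whence Lemma \ref{xxlem8.6}(2) gives that $\Bbbk[X_1^p,z]$ is not a Hopf subalgebra. Theorem \ref{xxthm8.8}(2) then forces $C(H)=C:=\Bbbk[X_1^p,z^p]$.

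\emph{Parts (2), (3) -- algebra structure.} A PBW argument (using $z^p=X_2^{p^2}-d_0^{p^2-p}X_2^p$ to reduce $X_2^{p^2}$) exhibits $\{X_1^iX_2^j:0\le i<p,\;0\le j<p^2\}$ as a $C$-basis of $H$, so $\dim_\Bbbk \overline{H}=p^3$. Write $x$ for the class of $X_1$; clearly $x^p=0$ and $x$ is primitive. In Part (2), where $d_0=0$, the relation $[X_2,X_1]=\delta(X_1)=\sum_{s\ge 1}d_sX_1^{p^s}$ has right-hand side entirely in $X_1^pH$, so $\overline{H}$ is commutative; setting $y:=b_0^{-1}\bar{X}_2$, one has $y^{p^2}=b_0^{-p^2}\overline{z^p}=0$, giving $\overline{H}\cong \Bbbk[x,y]/(x^p,y^{p^2})$. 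In Part (3) set $y$ and $z$ to be the classes of $d_0^{-1}X_2$ and $d_0^{-p}(X_2^p-d_0^{p-1}X_2)$ respectively; reducing the defining relation modulo $X_1^pH$ gives $[y,x]=d_0^{-1}\cdot d_0 X_1=x$, the identity $X_2^p=(X_2^p-d_0^{p-1}X_2)+d_0^{p-1}X_2$ translates to $y^p-y=z$, and $z$ is central with $z^p=d_0^{-p^2}\overline{z^p}=0$. Counting ordered monomials $x^iy^j$ with $0\le i<p$, $0\le j<p^2$ confirms that the stated relations present $\overline{H}$.

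\emph{Parts (2), (3) -- coalgebra structure.} In $\overline{H}\otimes\overline{H}$, every summand of $w$ containing a factor $X_1^{p^s}$ with $s\ge 1$ vanishes because that factor lies in $C_{+}H$. Only the $s=0$ term of the $b_s$-part survives, so
\[
w\;\equiv\;b_0\sum_{i=1}^{p-1}\lambda_i\,x^i\otimes x^{p-i}\quad\text{in }\overline{H}\otimes\overline{H},
\]
and rescaling the coproduct of $X_2$ according to the identifications of $y$ yields the displayed $\Delta(y)$ in both Parts (2) and (3). For $\Delta(z)$ in Part (3), Lemma \ref{xxlem8.6}(1) supplies
\[
\Delta(z)=z\otimes 1+1\otimes z+w^p-d_0^{p-1}w+\mathrm{ad}_b^{p-1}(w),
\]
in which each term of $w^p$ has both tensor factors divisible by $X_1^p$ and therefore vanishes in $\overline{H}\otimes\overline{H}$, while $\mathrm{ad}_b^{p-1}(w)\in X_1^pH\otimes H+H\otimes X_1^pH\subseteq C_{+}H\otimes H+H\otimes C_{+}H$ by Lemma \ref{xxlem8.7}(1) and so also vanishes. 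Substituting and scaling by $d_0^{-p}$ yields the stated $\Delta(z)$. Finally, the cocommutativity assertions in Parts (2) and (3) follow from the symmetry $\lambda_i=\lambda_{p-i}$.

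The main technical obstacle, the passage from Lemma \ref{xxlem8.6}(1) to a clean formula for $\Delta(z)$ modulo $C_{+}H$, has been essentially pre-digested by Lemmas \ref{xxlem8.6} and \ref{xxlem8.7}; the remaining work is careful bookkeeping of which summands of $w$ carry a factor of $X_1^p$, plus a mild rescaling of $X_2$ and $z$ to match coefficients.
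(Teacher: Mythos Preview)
Your proof is correct and follows essentially the same route as the paper. The only noteworthy variation is in Part~(3): to obtain $\Delta(z)$ you invoke Lemma~\ref{xxlem8.6}(1) in $H$ and then reduce modulo $C_{+}H$, disposing of $\mathrm{ad}_b^{p-1}(w)$ via Lemma~\ref{xxlem8.7}(1); the paper instead redoes the Jacobson binomial computation directly in $\overline{H}$, showing $\mathrm{ad}_{y\otimes 1+1\otimes y}(u)=0$ there (which is in fact immediate since $[y,x]=x$ forces $\mathrm{ad}_{y\otimes1+1\otimes y}(x^i\otimes x^{p-i})=p\,x^i\otimes x^{p-i}=0$). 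Your identification $y=d_0^{-1}\bar{X}_2$ is the correct one matching the displayed relations and coproduct; the paper's text writes $b_0^{-1}X_2$, which appears to be a typo.
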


\begin{proof} 
(1) This follows from Theorem \ref{xxthm8.8}(2,3).

\noindent 
(2)  Assume that $d_0 = 0$. Let $x$ be the image of 
$X_1$ and $y$ the image of $b_0^{-1} X_2$. The result
follows by direct computation.

\noindent
(3) Assume that $d_0 \neq 0$. Let $x$ be the image of 
$X_1$, $y$ the image of $b_0^{-1} X_2$, and $z$ the image
of $(d_0^{-1}X_2)^p-(d_0^{-1}X_2)$. Most of the assertions
follow by routine computations. For example, it is not 
hard to check that
$$ \Delta (y) = y \otimes 1 + 1 \otimes y + 
e u$$
where $u=\sum_{i=1}^{p-1}\lambda_i x^i \otimes x^{p-i}$
and $e:=b_0d_0^{-1}$. Here we only prove the formula for 
$\Delta(z)$. Repeating the computation in 
Lemma \ref{xxlem8.6}(1), we have, for 
$ad_b:=ad_{y\otimes 1+1\otimes y}$,
$$\begin{aligned}
\Delta(z)&=z\otimes 1+ 1\otimes z+(e u)^p -e u +ad_{b}^{p-1}(eu)\\
&= z\otimes 1+ 1\otimes z -e u +ad_{b}^{p-1}(eu)
\end{aligned}
$$
where the last equation follows from the fact that $u^p=0$ in 
$H/C_{+} H$. Similarly to \eqref{E8.5.2}, one checks that
$$\begin{aligned}
ad_b(u)&=(x\otimes 1)[(x\otimes 1+1\otimes x)^{p-1}-x^{p-1}\otimes 1]\\
&\quad +(1\otimes x)[(x\otimes 1+1\otimes x)^{p-1}-1\otimes x^{p-1}]\\
&=(x\otimes 1)(x\otimes 1+1\otimes x)^{p-1}-x^{p}\otimes 1\\
&\quad +(1\otimes x)(x\otimes 1+1\otimes x)^{p-1}-1\otimes x^{p}\\
&=(x\otimes 1+1\otimes x)^{p}-x^{p}\otimes 1-1\otimes x^{p}\\
&=x^{p}\otimes 1+1\otimes x^{p}-x^{p}\otimes 1-1\otimes x^{p}\\
&=0.
\end{aligned}
$$
Therefore $\Delta(z)=z\otimes 1+ 1\otimes z -e u$. This finishes the 
proof.
\end{proof}

\begin{remarks}
\label{xxrem8.11}
(1) In the light of Propositions \ref{xxpro8.9}-\ref{xxpro8.10} 
and the known situation for enveloping algebras of finite 
dimensional Lie algebras over $\Bbbk$, we conjecture that 
the following question has a positive answer:

\begin{question}
\label{xxque8.12}
Let $\Bbbk$ be of positive characteristic $p$.
Does every $n$-step $\Bbbk$-IHOE $H$ have a Hopf center
$C(H)$ over which $H$ is a finite module?
\end{question}

\noindent (2) For an algebraically closed field $\Bbbk$ 
of positive characteristic $p$, the connected Hopf 
$\Bbbk$-algebras of dimension $p^n$ for $p > 2$ and 
$n \leq 3$ have been classified in a series of papers 
by Nguyen, L. Wang and X. Wang \cite{NWW1, Wa1, Wa2}, 
culminating in \cite{NWW2}. The Hopf algebra 
$\overline{H}$ in Proposition \ref{xxpro8.9}(2) is 
the one in \cite[Theorem 7.4(1)]{Wa1}, and the Hopf 
algebra $\overline{H}$ in Proposition \ref{xxpro8.9}(3) 
is isomorphic to the one in \cite[Theorem 7.4(5)]{Wa1}.
The Hopf algebra $\overline{H}$ in Proposition 
\ref{xxpro8.10}(2) is isomorphic to the one 
listed in Table 5 with type T1 in the third case 
in \cite[p.858]{NWW2}. The Hopf algebra $\overline{H}$ 
in Proposition \ref{xxpro8.10}(3) can be written as 
$\Bbbk\langle X,Y,Z\rangle$ (where $X=y+z$, $Y=x$ and $Z=-b_0^{-1}d_0z$), 
with relations
$$ [X,Y]=Y, \quad [X,Z]=0, \quad [Y,Z]=0,$$
and
$$ X^p=X,\qquad  Y^p=0, \qquad Z^p=0,$$
and $X,Y$ being primitive and 
$$\Delta(Z)=Z\otimes 1+1\otimes Z+\sum_{i=1}^{p-1} \lambda_i Y^i\otimes Y^{p-i}.$$
Hence this Hopf algebra is isomorphic to the one in 
\cite[Theorem 1.3(B1)]{NWW1}. 

The following questions are thus very natural:

\begin{question}
\label{xxque8.13}
(a) Which finite dimensional connected Hopf $\Bbbk$-algebras 
can be realised as factors of IHOEs over $\Bbbk$? 

\noindent 
(b) If Question \ref{xxque8.12} has a positive answer, then 
which connected Hopf $\Bbbk$-algebras can be realised as 
the restricted Hopf algebras $H/C(H)_{+}H$ of $\Bbbk$-IHOEs $H$?
\end{question}

Regarding(a), note that a finite dimensional Hopf algebra $T$ 
which can be realised 
as a factor of an IHOE necessarily has some rather strict 
structural constraints: namely, it will have a chain of Hopf 
subalgebras $T_i$, for $i = 1, \ldots , t$ such that $T_1 = \Bbbk$ 
and $T_{i+1} = \Bbbk \langle T_i, x_{i+1}\rangle$ for elements 
$x_2, \ldots , x_n$, with a corresponding ``PBW-structure".  It 
is thus clear that not all finite dimensional 
connected Hopf $\Bbbk$-algebras can be so realised - 
for example, the restricted enveloping algebra of 
$\mathfrak{sl}(3,\Bbbk)$ presumably cannot 
be presented as a factor of an IHOE. Conversely, however, 
Xingting Wang has informed us \cite{Wa3} that he has checked 
case by case that all connected Hopf $\Bbbk$-algebras of 
dimension at most $p^3$ can be realised as Hopf factors 
of IHOEs over $\Bbbk$, at least when $p > 2$. 

\noindent (3) All of the speculations in remarks (1) and (2)  
can be reformulated with IHOEs over $\Bbbk$ replaced by 
\emph{all} connected Hopf 
$\Bbbk$-algebras of finite Gel'fand-Kirillov dimension.
\end{remarks}


\vspace{0.5cm}

\end{document}